\numberwithin{equation}{section}
\newtheorem{theorem}[equation]{Theorem}
\newtheorem{proposition}[equation]{Proposition}
\newtheorem{lemma}[equation]{Lemma}
\newtheorem{corollary}[equation]{Corollary}
\theoremstyle{definition}
\newtheorem{rmk}[equation]{Remark}
\newenvironment{remark}[1][]{\begin{rmk}[#1] \pushQED{\qed}}{\popQED \end{rmk}}
\newtheorem{eg}[equation]{Example}
\newenvironment{example}[1][]{\begin{eg}[#1] \pushQED{\qed}}{\popQED \end{eg}}
\newtheorem{defnaux}[equation]{Definition}
\newenvironment{definition}[1][]{\begin{defnaux}[#1]\pushQED{\qed}}{\popQED \end{defnaux}}
\newcommand{\bA}{\mathbf{A}}
\newcommand{\bC}{\mathbf{C}}
\newcommand{\bF}{\mathbf{F}}
\newcommand{\bG}{\mathbf{G}}
\newcommand{\bN}{\mathbf{N}}
\newcommand{\bP}{\mathbf{P}}
\newcommand{\cP}{\mathcal{P}}
\newcommand{\bQ}{\mathbf{Q}}
\newcommand{\bZ}{\mathbf{Z}}
\newcommand{\bm}{\mathbf{m}}
\newcommand{\arxiv}[1]{\href{http://arxiv.org/abs/#1}{{\tiny\tt arXiv:#1}}}
\newcommand{\DOI}[1]{\href{http://doi.org/#1}{\color{purple}{\tiny\tt DOI:#1}}}
\newcommand{\defn}[1]{\emph{#1}}
\let\ol\overline
\let\ul\underline
\renewcommand{\phi}{\varphi}
\DeclareMathOperator{\Sym}{Sym}
\DeclareMathOperator{\str}{str}
\DeclareMathOperator{\astr}{astr}
\DeclareMathOperator{\len}{len}
\newcommand{\lpp}{(\!(}
\newcommand{\rpp}{)\!)}
\newcounter{ccount}
\newcommand{\newC}[1]{\stepcounter{ccount}\expandafter\edef\csname #1\endcsname{\theccount}}
\newcommand{\C}[1]{C_{\csname #1\endcsname}}
\title{Two improvements in Brauer's theorem on forms}
\author{Arthur Bik}
\address{University of Neuch\^atel, Switzerland, and Institute for Advanced Study, Princeton, NJ, USA}
\email{\href{mailto:mabik@ias.edu}{mabik@ias.edu}}
\urladdr{\url{http://arthurbik.nl}}
\author{Jan Draisma}
\address{University of Bern, Switzerland, and Eindhoven University of Technology, The Netherlands}
\email{\href{mailto:jan.draisma@math.unibe.ch}{jan.draisma@math.unibe.ch}}
\urladdr{\url{https://mathsites.unibe.ch/jdraisma/}}
\author{Andrew Snowden}
\address{Department of Mathematics, University of Michigan, Ann Arbor, MI, USA}
\email{\href{mailto:asnowden@umich.edu}{asnowden@umich.edu}}
\urladdr{\url{http://www-personal.umich.edu/~asnowden/}}
\thanks{AB was partially supported by Postdoc.Mobility Fellowship number P400P2\_199196 from the Swiss National Science
Foundation and a grant from the Simons Foundation (816048, LC).
JD was partially supported by the Vici grant 639.033.514 from the Netherlands Organisation for Scientific Research and by Swiss National Science Foundation project grant 200021\_191981.
AS was supported by NSF grant DMS-2301871.}
\date{January 3, 2024}
\begin{document}

\begin{abstract}
Let $k$ be a \defn{Brauer field}, that is, a field over which every diagonal form in sufficiently many variables has a nonzero solution; for instance, $k$ could be an imaginary quadratic number field. Brauer proved that if $f_1, \ldots, f_r$ are homogeneous polynomials on a $k$-vector space $V$ of degrees $d_1, \ldots, d_r$, then the variety $Z$ defined by the $f_i$'s has a non-trivial $k$-point, provided that $\dim{V}$ is sufficiently large compared to the $d_i$'s and $k$. We offer two improvements to this theorem, assuming $k$ is infinite. First, we show that the Zariski closure of the set $Z(k)$ of $k$-points has codimension $<C$, where $C$ is a constant depending only on the $d_i$'s and $k$. And second, we show that if the strength of the $f_i$'s is sufficiently large in terms of the $d_i$'s and $k$, then $Z(k)$ is actually Zariski dense in $Z$. The proofs rely on recent work of Ananyan and Hochster on high strength polynomials.
\end{abstract}

\maketitle
\tableofcontents

\section{Introduction}

\subsection{Results}

In 1945, Richard Brauer \cite{Brauer} proved a now-famous theorem on the solutions to a system of homogeneous polynomial equations. The purpose of this paper is to offer two improvements on Brauer's theorem. To discuss this circle of ideas, it will be convenient to introduce the following terminology.

\begin{definition}
A field $k$ is a \defn{Brauer field} if for every $d \ge 1$, there is a quantity $N_k(d)$ such that any equation of the form
\begin{displaymath}
a_1 x_1^d + \ldots + a_n x_n^d = 0
\end{displaymath}
with $n>N_k(d)$ and $a_i \in k$ has a non-trivial solution in $k$.
\end{definition}

Examples of Brauer fields include finite fields, $p$-adic fields, function fields, and non-real number fields; see \S \ref{s:brauer} for details.

Fix a Brauer field $k$. Let $\ul{f}=(f_1, \ldots, f_r)$ be an $r$-tuple
of homogeneous polynomials defined on the finite dimensional $k$-vector
space $V$ of degrees $\ul{d}=(d_1, \ldots, d_r)$, and let $Z=Z(\ul{f})$
be the closed subvariety of $V$
defined by the vanishing of the $f_i$'s. We begin by recalling Brauer's
original result:

\newC{c:brauer}
\begin{theorem}[Brauer] \label{thm:brauer}
If $\dim(V)>\C{c:brauer}(\ul{d})$, then $Z$ has a nonzero $k$-point.
\end{theorem}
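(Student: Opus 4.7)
The plan is to prove Brauer's theorem by induction, after first strengthening the statement to the following: for every $m \ge 1$, if $\dim V$ is sufficiently large in terms of $m$ and $\underline{d}$, then $Z(\underline{f})$ contains a linear subspace of dimension $m$. Setting $m = 1$ recovers the claim, while the stronger form is what enables the inductive step to treat one form at a time.

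The key auxiliary tool is a single-form \emph{diagonalization lemma}: for every $d, m' \ge 1$ there is a constant $N_0(d, m')$ such that any form $f$ of degree $d$ on a space $W$ with $\dim W > N_0(d, m')$ admits linearly independent vectors $e_1, \ldots, e_{m'} \in W$ on whose span $f$ takes the diagonal shape $f(t_1 e_1 + \cdots + t_{m'} e_{m'}) = a_1 t_1^d + \cdots + a_{m'} t_{m'}^d$. I would prove the lemma by a Gram--Schmidt-style greedy construction: supposing $e_1, \ldots, e_l$ already satisfy the required off-diagonal vanishing, expand $f(t_1 e_1 + \cdots + t_l e_l + t_{l+1} v)$ as a polynomial in $t_1, \ldots, t_{l+1}$. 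The coefficient of each non-diagonal monomial $t_1^{b_1} \cdots t_{l+1}^{b_{l+1}}$ with $1 \le b_{l+1} \le d - 1$ is a homogeneous polynomial of degree $b_{l+1} < d$ in $v$, giving finitely many conditions of degrees $< d$ on $v$. The strengthened inductive hypothesis at lower degrees then yields a large subspace of valid choices for $e_{l+1}$, from which one picks a vector linearly independent from $e_1, \ldots, e_l$.

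Given the lemma, the main inductive step runs as follows. Pick a form of top degree, say $f_1$ of degree $d$, and apply the Diagonalization Lemma to it with parameter $m' = m_1(N_k(d)+1)$, producing a subspace $U \subset V$ with a basis in which $f_1|_U = \sum_i a_i y_i^d$. Partition the $y$-coordinates of $U$ into $m_1$ blocks of size $N_k(d) + 1$; on each block the Brauer-field property yields a nonzero vector annihilated by $f_1$, and the $k$-span $U_1$ of these $m_1$ disjointly-supported vectors is $m_1$-dimensional and entirely contained in $Z(f_1)$ (since a diagonal form evaluates additively on vectors with pairwise disjoint support). The inductive hypothesis applied to $f_2|_{U_1}, \ldots, f_r|_{U_1}$ -- a system with one fewer form -- then produces an $m$-dimensional subspace of common zeros inside $U_1$.

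The main obstacle is keeping the constants finite through the nested recursion. The argument is a double induction, outer on $\max_i d_i$ and inner on $r$, so that the Diagonalization Lemma at degree $d$ may invoke the strengthened Brauer theorem at degrees $< d$ while the peeling step reduces $r$. Matching the $m'$ required by the lemma with the $m_1$ required by the peeling step and tracing all the bounds back yields astronomical (iterated-tower) constants, but the recursion plainly terminates and the constant $\C{c:brauer}(\underline{d})$ exists.
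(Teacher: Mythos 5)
The paper cites Brauer's 1945 article for this theorem rather than proving it, and the only internal content is the $r=1$ sketch in the introduction, which uses the same diagonalize-then-invoke-the-Brauer-field-property idea that you reconstruct. Your linear-subspace strengthening together with the peel-off-one-form-at-a-time double induction is the standard and correct way to handle $r>1$ (and is in fact how Brauer argued), so the proposal is sound and consistent with the cited approach.
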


This theorem is really asserting that a constant $\C{c:brauer}$ exists
making the statement true. We employ this basic structure throughout
the paper: if a new constant $C_j$ 
appears in a theorem, then the theorem is asserting its existence. The
constant $C_j$ may depend on the field but we never indicate this in the notation.

Our two main theorems improve Brauer's theorem by showing that $Z$ in fact possesses an abudance of $k$-points. In these theorems, we must assume that the field $k$ is infinite. Our first main theorem is the following:

\newC{c:mainthm2}
\begin{theorem} \label{mainthm2}
The Zariski closure of $Z(k)$ in $Z$ has codimension $\le \C{c:mainthm2}(\ul{d})$.
\end{theorem}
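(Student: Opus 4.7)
The plan is to proceed by strong induction on the multi-degree $\ul{d}=(d_1,\ldots,d_r)$, ordered by the multi-set (Dickson) ordering in which replacing a single entry $d_i$ by any finite collection of positive integers each strictly less than $d_i$ counts as a strict decrease; this ordering is well-founded by Dickson's lemma. The base case $r=0$ is immediate: $Z=V$, and since $k$ is infinite $V(k)$ is Zariski dense in $V$, so the codimension is $0$. (When $\dim V$ is below the Brauer threshold the statement holds trivially via $\codim_Z \ol{Z(k)} \le \dim V$, which is bounded in terms of $\ul{d}$ alone, so I may assume throughout that $\dim V$ is large enough for every subvariety appearing in the argument to have the $k$-points produced by Theorem~\ref{thm:brauer}.)

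The inductive step uses two ingredients: the companion density theorem announced in the abstract (to be proved later in the paper), which supplies a strength threshold $s_0=s_0(\ul{d})$ such that whenever every $f_i$ has strength $\ge s_0$, $Z(k)$ is Zariski dense in $Z$; and the Ananyan--Hochster-style decomposition of any low-strength form into lower-degree pieces. If every $f_i$ has strength $\ge s_0$, the density theorem at once gives $\ol{Z(k)}=Z$. Otherwise, after relabeling, $f_1$ has strength at most $s_0$, so we may write
\[
f_1 = g_1 h_1 + \cdots + g_{s_0+1} h_{s_0+1}
\]
with each $g_j,h_j$ homogeneous of positive degree strictly less than $d_1$.

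Set $Z_0 := V(g_1,\ldots,g_{s_0+1},\,f_2,\ldots,f_r)$. The common vanishing of the $g_j$'s forces $f_1=0$, so $Z_0\subseteq Z$, and since $Z_0$ is cut out inside $Z$ by the $s_0+1$ additional equations $g_j=0$, one has $\codim_Z Z_0\le s_0+1$. The defining tuple of $Z_0$ has multi-degree $\ul{d}'$ obtained from $\ul{d}$ by deleting $d_1$ and inserting $\deg g_1,\ldots,\deg g_{s_0+1}$, all strictly less than $d_1$; hence $\ul{d}'\prec\ul{d}$. Applying the inductive hypothesis to $Z_0$ yields $\codim_{Z_0}\ol{Z_0(k)}\le C(\ul{d}')$, and additivity of codimensions along the chain $\ol{Z_0(k)}\subseteq Z_0\subseteq Z$, combined with $\ol{Z_0(k)}\subseteq \ol{Z(k)}$, gives
\[
\codim_Z \ol{Z(k)} \;\le\; \codim_Z\ol{Z_0(k)} \;=\; \codim_Z Z_0 + \codim_{Z_0}\ol{Z_0(k)} \;\le\; (s_0+1)+C(\ul{d}').
\]
Only finitely many $\ul{d}'$ can arise (degrees bounded by $d_1-1$, number of new entries by $s_0+1$), so taking the maximum of these bounds defines $C(\ul{d})$ and closes the induction.

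The principal obstacle is not this bookkeeping but the companion density theorem it invokes: showing that a tuple $\ul{f}$ of sufficiently-high-strength forms over an infinite Brauer field $k$ has $Z(k)$ Zariski dense in $Z$. That step is where the Ananyan--Hochster structural theory for high-strength polynomials---which ensures that such systems behave essentially like generic complete intersections---must be combined with Brauer's original theorem~\ref{thm:brauer}, presumably by exploiting high strength to produce many independent directions along which a Brauer-theoretic $k$-point of $Z$ can be deformed while remaining inside $Z$, and then using some form of analytic/formal approximation to pass from these deformations to genuine $k$-points filling out a Zariski-dense subset.
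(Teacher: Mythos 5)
Your overall strategy---inline regularization by induction on the multi-degree, invoking the density theorem (Theorem~\ref{mainthm}) in the high-strength case---is essentially the same as the paper's: the paper instead cites Proposition~\ref{prop:small-subalg} as a black box, which packages precisely the regularization loop you are unwinding manually. But your proof as written contains a genuine gap in the dichotomy that drives the induction.

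You assume the density theorem gives a threshold $s_0$ so that ``whenever \emph{every} $f_i$ has strength $\ge s_0$, $Z(k)$ is Zariski dense,'' and dualize this to ``otherwise some single $f_1$ has strength $\le s_0$, write $f_1 = g_1h_1+\cdots$.'' That misstates the hypothesis of Theorem~\ref{mainthm}: it requires $\str(\ul{f})>C$, where $\str(\ul{f})$ is the \emph{collective} strength---the minimum strength over all nontrivial $k$-linear combinations of the $f_i$ of equal degree---not the individual strengths. These are very different invariants: for $f_1=f_2=x_1^2+\cdots+x_n^2$ each $f_i$ has strength $\approx n/2$, yet $\str(f_1,f_2)=\str(f_1-f_2)=0$, and $Z(f_1,f_2)=Z(f_1)$ is cut out by a single equation. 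In such a situation your ``high-strength'' branch applies but Theorem~\ref{mainthm} does not, so the argument breaks down. The correct dichotomy is $\str(\ul{f}) > s_0$ versus $\str(\ul{f}) \le s_0$; in the latter case you have a low-strength linear combination $u=\sum_{d_j=d_1} c_jf_j$ with (say) $c_1\ne 0$, and you should decompose $u$, not $f_1$. Since $f_1$ is a $k$-linear combination of $u$ and $f_2,\ldots,f_r$, the variety $Z_0:=V(g_1,\ldots,g_s,f_2,\ldots,f_r)$ (where $u=\sum g_jh_j$) is still contained in $Z(\ul{f})$, the new multi-degree still strictly decreases, and the rest of your bookkeeping goes through. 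This is exactly what Step~2 of the regularization algorithm in \S3 does. With that repair the proof is sound and, modulo packaging, coincides with the paper's.

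As a minor cosmetic point: the paper avoids the codimension-additivity bookkeeping by bounding $\codim_V Z'$ directly via Krull and then noting $\codim_Z\ol{Z(k)}\le\codim_V\ol{Z(k)}$; your chain $\codim_Z\ol{Z_0(k)}=\codim_Z Z_0+\codim_{Z_0}\ol{Z_0(k)}$ is fine if codimension is read as a dimension difference, but that step deserves a sentence of justification when $Z$ is reducible.
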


If $\dim(V)$ exceeds $\C{c:mainthm2}(\ul{d})+r$ then Theorem~\ref{mainthm2}
implies that the Zariski closure of $Z(k)$ has positive dimension, and
so $Z(k)$ is infinite. In particular, this means that $Z(k)$ contains
a nonzero point, and so Theorem~\ref{mainthm2} recovers
Theorem~\ref{thm:brauer}. 

In their work on Stillman's conjecture, Ananyan and Hochster \cite{AH}
defined a notion of rank for polynomials, or tuples of polynomials,
that they called \defn{strength}\footnote{Schmidt \cite{Schmidt} had
previously defined the same invariant.}. We recall the definition in
\S \ref{ss:strength}, and write $\str(\ul{f})$ for the strength of the
tuple $\ul{f}$. Our second result applies in the high strength regime:

\newC{c:mainthm}
\begin{theorem} \label{mainthm}
If $\str(\ul{f})>\C{c:mainthm}(\ul{d})$ then $Z(k)$ is Zariski dense in $Z$.
\end{theorem}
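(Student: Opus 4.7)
The plan is to combine Brauer's existence theorem (Theorem~\ref{thm:brauer}) with the structural theory of Ananyan--Hochster for high-strength polynomials, upgrading mere existence of a $k$-point to Zariski density.

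First, I would invoke the main results of Ananyan--Hochster to guarantee that, by choosing $\C{c:mainthm}(\ul d)$ sufficiently large, the hypothesis $\str(\ul f) > \C{c:mainthm}(\ul d)$ forces $(f_1,\ldots,f_r)$ to be a prime ideal of height $r$ in $k[V]$, $Z$ to be a geometrically integral complete intersection of codimension $r$ in $V$, and the singular locus of $Z$ to have codimension in $V$ exceeding any prescribed bound. In particular, $Z$ is generically smooth, so showing $Z(k)$ is Zariski dense is equivalent to showing that every nonempty $k$-rational Zariski open $U \subset Z$ contains a $k$-point. Moreover, applying Theorem~\ref{thm:brauer} to the enlarged system consisting of $\ul f$ together with the $r \times r$-minors of the Jacobian $(\partial f_i / \partial v_j)$ (whose degrees and number are bounded in terms of $\ul d$, and whose common zero locus is the singular locus of $Z$, of very high codimension) produces a smooth $k$-rational point $v_0 \in Z$.

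The heart of the proof is to promote this smooth $k$-point into a dominant $k$-rational parametrization of $Z$. After a $k$-linear change of coordinates centered at $v_0$ and adapted so that the Jacobian of $\ul f$ at $v_0$ is in row-echelon form, the Ananyan--Hochster description of high-strength tuples should yield a presentation in which $r$ of the coordinates can be solved for as $k$-rational functions of the remaining $\dim V - r$ coordinates, thereby producing a dominant $k$-rational map $\phi\colon \bA^{\dim V - r} \dashrightarrow Z$. Because $k$ is infinite, $\bA^{\dim V - r}(k)$ is Zariski dense in $\bA^{\dim V - r}$; its image under the dominant $k$-rational map $\phi$ is then Zariski dense in $Z$ and consists of $k$-points, yielding the theorem.

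The main obstacle is the rationality assertion in the previous paragraph: a smooth $k$-point of a generically smooth $k$-variety does not automatically yield a $k$-unirational parametrization (the implicit function theorem only provides a formal power series section). Exploiting the high-strength hypothesis to extract an honest rational section, rather than just a formal one, is where the technical core of the argument lies; it requires a careful bookkeeping of how strength degrades under the substitutions and eliminations performed, and this is precisely where the fine quantitative estimates of Ananyan--Hochster are needed in full force.
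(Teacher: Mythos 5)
Your approach is genuinely different from the paper's, and it contains a logical error beyond the gap you flag at the end. Applying Theorem~\ref{thm:brauer} to the system $\ul{f}$ augmented by the $r\times r$ Jacobian minors yields a common $k$-zero of \emph{all} those polynomials, which is a \emph{singular} $k$-point of $Z$, not a smooth one. To find a smooth $k$-point you need the minors \emph{not} to all vanish at the point, and Brauer's theorem gives no such control. (The existence of a smooth $k$-point would indeed follow from Theorem~\ref{mainthm2}, since by Ananyan--Hochster the singular locus has high codimension while the Zariski closure of $Z(k)$ has bounded codimension; but in the paper Theorem~\ref{mainthm2} is deduced from Theorem~\ref{mainthm}, so you cannot invoke it here without circularity.)

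The gap you acknowledge --- passing from a smooth $k$-point to a dominant $k$-rational parametrization --- is indeed where the work lies, and the paper avoids it rather than filling it. The actual proof proceeds by induction on the multi-degree $\ul{d}$: assuming the theorem for all smaller multi-degrees ($\Sigma^*(\ul{d})$), Proposition~\ref{prop:good} constructs a three-dimensional subspace on which $f_i$ restricts to a ``good'' form $xy^{d-1}+ay^d+bz^d$ with $b\neq 0$. Iterating this (Proposition~\ref{prop:normal}) produces a small linear slice $W\subset V$, with coordinates $x_i,y_i,z_i,w_j$, on which each $f_i$ has the triangular form $x_iy_i^{d_i-1}+a_iy_i^{d_i}+b_iz_i^{d_i}+h_i(\text{later variables})$. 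On the open locus where $y_1\cdots y_r\neq 0$ one solves rationally for the $x_i$, so $Z\cap W$ is rational by inspection (Proposition~\ref{prop:normal2}); combining with Proposition~\ref{prop:non-vanishing} ensures the target polynomial $g$ does not vanish identically on $Z\cap W$, giving density. Rationality is thus \emph{built} by the inductive construction on a carefully chosen linear slice, not deduced from smoothness at a point; your proposal would need a comparable explicit mechanism and as written does not close.
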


This theorem rather easily implies Theorem~\ref{mainthm2}; see \S
\ref{ss:pf-mainthm2}. The majority of the paper is thus devoted to
proving Theorem~\ref{mainthm}. We give one simple application of the
theorem here:

\newC{c:maincor}
\begin{corollary}
Let $d \ge 1$ be an integer that is not a multiple of the characteristic of $k$. Then any equation
\begin{displaymath}
a_1 x_1^d + \cdots + a_n x_n^d = 1,
\end{displaymath}
with $a_1, \ldots, a_n \in k$ nonzero and $n \geq \C{c:maincor}(d)$ has a solution in $k$.
\end{corollary}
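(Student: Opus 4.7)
The plan is to homogenize the equation and invoke Theorem~\ref{mainthm}. I would introduce the single degree-$d$ form $f = a_1 x_1^d + \cdots + a_n x_n^d - y^d$ on $V = k^{n+1}$, observe that a $k$-solution of $a_1 x_1^d + \cdots + a_n x_n^d = 1$ corresponds to a $k$-point of $Z(f)$ with $y = 1$, and more generally that any $k$-point of $Z(f)$ with $y \neq 0$ yields such a solution after the rescaling $x_i \mapsto x_i/y$. So it suffices to produce a $k$-point of $Z(f)$ off the hyperplane $H := \{y = 0\}$.

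Next I would apply Theorem~\ref{mainthm} with the tuple $(f)$ of degrees $(d)$ and constant $C := \C{c:mainthm}(d)$: provided $\str(f) > C$, the set $Z(f)(k)$ is Zariski dense in $Z(f)$. Since $Z(f) \subseteq \bA^{n+1}$ is a hypersurface of pure dimension $n$, while $Z(f) \cap H$ is cut out in $H \cong \bA^n$ by the nonzero form $a_1 x_1^d + \cdots + a_n x_n^d$ (nonzero because every $a_i \neq 0$) and thus has dimension $n - 1$, the intersection $Z(f) \cap H$ is a proper closed subset of $Z(f)$. Zariski density then forces a $k$-point of $Z(f)$ outside $H$, as required.

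The only real work is to check that $\str(f) > C$ once $n$ is large enough in terms of $d$. Here the characteristic hypothesis enters: the partial derivatives $\partial_{x_i} f = d a_i x_i^{d-1}$ and $\partial_y f = -d y^{d-1}$ use $\mathrm{char}(k) \nmid d$ and $a_i \neq 0$ to ensure that they vanish simultaneously only at the origin, so the singular locus of $f$ is $\{0\} \subseteq V$, of codimension $n + 1$. I would then invoke the standard inequality linking strength to singularities: if $f = \sum_{j=1}^s g_j h_j$ with $1 \le \deg g_j, \deg h_j \le d-1$, then the singular locus of $f$ contains the common zero locus of the $2s$ forms $g_1, h_1, \ldots, g_s, h_s$, which has codimension at most $2s$. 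This yields $2 \str(f) \ge n + 1$, so the choice $\C{c:maincor}(d) := 2C + 1$, depending only on $d$, forces $\str(f) > C$ whenever $n \geq \C{c:maincor}(d)$.

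The main obstacle is this last strength bound: it is the one step where the characteristic hypothesis is genuinely needed, and without it the diagonal form $f$ can degenerate (for example, be a $p$-th power in characteristic $p$ when $p \mid d$), destroying any uniform lower bound on $\str(f)$ in terms of $n$. Everything else is a routine application of Theorem~\ref{mainthm} together with a dimension count on the hyperplane section.
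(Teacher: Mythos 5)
Your proof is correct and follows essentially the same route as the paper: homogenize by introducing $-y^d$, bound the strength of the resulting diagonal form in $n+1$ variables from below (your inline singular-locus argument is exactly the proof of Proposition~\ref{prop:diagstrength}, which you could simply cite), invoke Theorem~\ref{mainthm} to get density of $k$-points, and then observe that density forces a $k$-point off the hyperplane $\{y=0\}$. Your explicit dimension count for why $Z(f)\cap H$ is a proper closed subset is a mild elaboration of a step the paper leaves implicit, but the underlying argument is identical.
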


\begin{proof}
We take $\C{c:maincor}(d)=2\C{c:mainthm}((d))$. Let $f=\sum_{i=1}^n
a_i x_i^d - x_{n+1}^d$. This has strength at least $(n+1)/2$ by
Proposition~\ref{prop:diagstrength}. Since this exceeds
$\C{c:mainthm}(d)$, Theorem~\ref{mainthm} implies that the
$k$-points of $Z(f)$ are Zariski dense. In particular, there is a
$k$-solution to $f=0$ with $x_{n+1}$ non-zero. Dividing by $x_{n+1}^d$
yields the result. 
\end{proof}

\subsection{Idea of proof}

We begin by recalling the basic idea behind Brauer's original proof.
We proceed by induction on the degree tuple $\ul{d}$; here we assume
$d_1 \ge d_2 \ge \cdots \ge d_r$, and we order such tuples
lexicographically. To illustrate the main idea, let us treat the case
$r=1$. Thus we have a single homogeneous polynomial $f$ of degree $d$,
and we are assuming that the result holds for all tuples of polynomials of degrees $<d$.

Pick any nonzero vector $v_1 \in V$. For another vector $v_2$, write
\begin{displaymath}
f(x_1 v_1+x_2 v_2) = \sum_{i+j=d} f_j(v_2) x_1^i x_2^j,
\end{displaymath}
where $f_j$ is a degree-$j$ polynomial in $v_2$; here we are treating $v_1$ as constant and $v_2$ as variable. Note that $f_d=f$. By the inductive hypothesis, we can find $v_2 \in V$ such that $f_j(v_2)=0$ for all $0<j<d$; we can also ensure $v_2$ is not in the span of $v_1$ by first passing to an appropriate complementary subspace. We thus have
\begin{displaymath}
f(x_1 v_1+x_2 v_2) = f(v_1) x_1^d + f(v_2) x_2^d.
\end{displaymath}
We say that $v_1$ and $v_2$ are \defn{$f$-orthogonal}. Continuing in this way, we find a linearly independent $f$-orthogonal sequence $v_1, \ldots, v_n$. We then have
\begin{displaymath}
f(x_1 v_1+\ldots + x_n v_n) = f(v_1) x_1^d + \ldots + f(v_n) x_n^d.
\end{displaymath}
By taking $n>N_k(d)$, we obtain a non-trivial solution by appealing to the defining property of Brauer fields.

Our proof of Theorem~\ref{mainthm} follows a similar plan: we proceed by induction on $\ul{d}$ and use the inductive hypothesis to construct a linear subspace of $V$ on which the given polynomials have a simple form. However, the argument is now somewhat more subtle.

Once again, we illustrate the idea for $r=1$; we also assume $\operatorname{char}(k)=0$ for simplicity. Thus suppose $f$ is a high strength polynomial of degree $d$, and we know Theorem~\ref{mainthm} holds for all tuples of polynomials of degrees $<d$. We aim to show that the $k$-points of $Z=Z(f)$ are dense. For this, it suffices to fix a polynomial $g$ that is not identically zero on $Z$, and construct a $k$-point of $Z$ at which $g$ does not vanish.

If we simply proceed as in the original argument, we run into an
issue: $g$ could potentially vanish on the entire span of $v_1,
\ldots, v_n$. To address this issue, we begin by finding a subspace
$F$ of $V$ of small dimension such that $g$ does not vanish
identically on $Z \cap F$. We then find $v_1, \ldots, v_n$ that are
$f$-orthogonal to each other and to $F$. We now also add the constraint that $f(v_i) \ne 0$ for all $i$; we can ensure this by making use of the full force of our inductive hypothesis. The restriction of $f$ to $\operatorname{span}(v_1, \ldots, v_n) \oplus F$ has the form
\begin{displaymath}
a_1 x_1^d + \ldots + a_n x_n^d + h(w_1, \ldots, w_m),
\end{displaymath}
where the $a_i$ are nonzero and $h$ is some polynomial that we have no information about; the $w_j$ here are the coordinates on $F$.

We next show that we can find a three dimensional subspace $E$ of
$\operatorname{span}(v_1, \ldots, v_n)$ such that the restriction of
$f$ has the form $xy^{d-1}+ay^d+bz^d$, where $a$ and $b$ are scalars
with $b$ nonzero and $x$, $y$, and $z$ are suitable coordinates on $E$. This uses the defining property of Brauer fields, as well as our inductive hypothesis. We thus see that the restriction of $f$ to $W=E \oplus F$ is
\begin{displaymath}
xy^{d-1}+a y^d+b z^d + h(w_1, \ldots, w_m).
\end{displaymath}
Since we can (rationally) solve the above equation for $x$, it follows that $Z \cap W$ is a rational variety; thus its $k$-points are dense (since $k$ is infinite). Since $g$ is not identically~0 on $Z \cap W$ (as it is not identically zero on $Z \cap F$), we can therefore find a $k$-point where $g$ is nonzero.

To carry out the details in the above argument, we must make use of deep properties of high strength polynomials established by Ananyan and Hochster in the course of their proof of Stillman's conjecture \cite{AH}. In positive characteristic the above argument does not quite work, since diagonal forms are not general enough, and a slight modification is required.

\subsection{Motivation}

Our motivation for this work stemmed from a question on the universality of high strength tensors.

Fix an algebraically closed field $k$, an integer $d \ge 1$ such that
$d!$ is invertible in $k$, and $m \ge 1$. Let $f$ be a degree $d$
polynomial on a vector space $V$ with strength sufficiently large
compared to $k$, $d$, and $m$. Given any degree $d$ polynomial $g$ on
a vector space $W$ of dimension $\le m$, there is a linear embedding
$i \colon W \to V$ such that $g=i^*(f)$. Thus $f$ is ``$m$-universal''
in the sense that one can obtain any polynomial in $m$ variables from
$f$ by a linear substitution. This important result is due to Kazhdan
and Ziegler \cite[Theorem~1.12]{KaZ1}. Subsequently, \cite{BDDE} extended this result to arbitrary tensors.

In the above setting, if we regard $f$ and $g$ as given then we can view $g=i^*(f)$ as a system of polynomial equations in the matrix entries of $i$. Since $\dim(V)$ is large and $\dim(W)$ is small, this sytem contains a small number of polynomials in a large number of variables. In view of Brauer's theorem, it is therefore reasonable to ask if this system can be solved over a Brauer field. Brauer's theorem on its own is insufficient to prove this, but the results of this paper are exactly what is needed. In a forthcoming paper \cite{BDS1} we carry out the details, thereby generalizing the universality resluts to the case when $k$ is a Brauer field.

\subsection{Related work}

There are many papers devoted to improving the constant $\C{c:brauer}$ in Brauer's theorem by algebraic methods; see, for example, \cite{Leep, LS, IL, Wooley1, Wooley2}.

Another thread of related work studies solutions to homogeneous equations in many variables over number fields, by analytic methods; see, for example, \cite{Birch1, Birch2, BHB, BHB2, FM, Schmidt, Skinner}.

\subsection{Outline}

In \S \ref{s:brauer} we give some examples and basic properties of Brauer fields. In \S \ref{s:prelim} we review some results on strength. In \S \ref{s:red}, we reduce our main theorems to Proposition~\ref{prop:good}. This proposition is proved in \S \ref{s:normal0} (in characteristic~0) and \S \ref{s:normalp} (in characteristic $p$).

\subsection{Notation and conventions}

We regard the field $k$ as fixed. The constants appearing in various results depend on $k$, but we do not indicate this.

\subsection*{Acknowledgements}

We thank Amichai Lampert for helpful discussions.

\section{Brauer fields} \label{s:brauer}

We collect some examples and basic properties of Brauer fields.

\subsection{Formally real fields}

Recall that the \defn{Stufe} or \defn{level} of a field $k$, denoted $s(k)$, is the minimal $s$ such that $-1$ is a sum of $s$ squares in $k$; if no such $s$ exists then $s(k)=\infty$. The field $k$ is called \defn{non-real} if $s(k)$ is finite, and \defn{formally real} if $s(k)=\infty$. If $k$ is a Brauer field then $x_1^2+\cdots+x_n^2=0$ has a non-trivial solution when $n=1+N_k(2)$, and so $s(k) \le N_k(2)$. In particular, a Brauer field must be non-real.

\subsection{Number fields}

A number field $k$ is formally real if and only if it admits a real embedding. We have seen that such fields are not Brauer. On the other hand:

\begin{proposition}[{Peck, \cite{Peck}}]
A non-real number field is a Brauer field.
\end{proposition}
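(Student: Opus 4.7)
The plan is to combine a local analysis at every place of $k$ with a Hardy--Littlewood circle method adapted to number fields, following Peck's original strategy. For each $d \ge 1$ the goal is to produce a bound $N_k(d)$ so that every diagonal form $f = a_1 x_1^d + \cdots + a_n x_n^d$ over $k$ with $n > N_k(d)$ has a nonzero $k$-solution.

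I would first verify the local statement at every place. At each finite place $v$, the completion $k_v$ is a $p$-adic field, and a combination of Chevalley--Warning modulo $\fp_v$ with Hensel's lemma (in the quantitative form that handles singular residue solutions by successively peeling off variables) produces a bound $N_v(d)$, depending only on $v$ and $d$, such that any diagonal form of degree $d$ in more than $N_v(d)$ variables has a nontrivial $k_v$-zero; crucially, only finitely many places impose a genuine constraint for a given coefficient vector. At each archimedean place, the non-reality hypothesis forces $k_v \cong \bC$, and then any diagonal form in $\ge 2$ variables trivially has nonzero complex zeros.

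For the global step, I would count integral solutions $x \in \cO_k^n$ to $f(x) = 0$ of archimedean height at most $B$ by expanding the indicator of $f = 0$ as an exponential-sum integral over a fundamental domain for the additive quotient $\bA_k/k$. The standard major/minor arc decomposition yields a main term that factors as a product of local densities across all places of $k$; each factor is positive by the local step, provided $n$ exceeds the corresponding local threshold. The minor-arc contribution is dominated by Weyl-type estimates for exponential sums of the form $\sum_{x \in \cO_k/\fa} \psi_v(a x^d)$, which become effective once $n$ is large in terms of $d$ and $k$. Choosing $n$ large enough makes the main term outgrow the error term, producing a nonzero integral, hence $k$-rational, solution, and so the resulting $N_k(d)$ depends only on $k$ and $d$.

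The main obstacle is the minor-arc estimate over a general number field: Weyl's inequality must be extended to exponential sums indexed by $\cO_k$, contributions from units and from the class group must be controlled via the Dirichlet unit theorem and class-group summation, and all bounds must be uniform in the coefficients $a_i$. This is the technical heart of Peck's paper. It is also the reason one cannot simply apply the Hasse principle: the Hasse principle is known to fail for diagonal forms of degree $\ge 3$, so local solvability alone does not suffice, and the circle method is used precisely to manufacture a genuine global solution from positive local densities rather than to glue local ones.
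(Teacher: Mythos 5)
The paper does not prove this proposition: it cites Peck's 1949 paper directly and remarks only that ``the proof of Peck's theorem is analytic: it uses the circle method.'' Your sketch is a faithful outline of exactly that approach --- local solvability (Chevalley--Warning plus Hensel at finite places, $k_v\cong\bC$ at archimedean places because non-real number fields have no real embeddings), followed by a circle-method count of solutions in $\cO_k^n$ whose main term is a product of positive local densities --- and you correctly flag the obstacles (minor-arc Weyl estimates over $\cO_k$, units, class group) as the technical core that you are not supplying. You are also right that one cannot shortcut via the Hasse principle, since it fails for diagonal forms of degree $\ge 3$ (Selmer's $3x^3+4y^3+5z^3$); the circle method is needed to manufacture a global solution rather than to glue local ones. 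So your proposal is a reasonable reconstruction of Peck's strategy, at the same level of detail as the paper's own treatment, which is to say: a pointer to \cite{Peck} together with the one-line observation that the argument is analytic rather than algebraic.
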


We note that the proof of Peck's theorem is analytic: it uses the circle method. Every other result discussed in this section is purely algebraic.

\subsection{Few power classes}

We now investigate fields with few power classes.

\begin{proposition}
Let $k$ be a non-real field such that $k^{\times}/(k^{\times})^d$ is finite for all $d \ge 1$. Then $k$ is a Brauer field.
\end{proposition}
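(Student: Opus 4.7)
The plan is to combine a pigeonhole argument based on the finiteness of $k^\times/(k^\times)^d$ with an analysis of the set of sums of $d$-th powers. First, I would set $N := |k^\times/(k^\times)^d|$, which is finite by hypothesis. Given an equation $a_1 x_1^d + \cdots + a_n x_n^d = 0$ with $a_i \in k^\times$ and $n > N(M-1)$ (for a constant $M = M(d)$ to be chosen), some $M$ of the coefficients share a coset of $(k^\times)^d$; after reindexing, $a_j = a_1 h_j^d$ for $j = 1,\ldots,M$ with $h_j \in k^\times$ and $h_1 = 1$. Setting the remaining $x_i$ to zero and substituting $y_j := h_j x_j$, the equation reduces (after dividing by $a_1$) to the Fermat equation $y_1^d + \cdots + y_M^d = 0$. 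So the proposition reduces to finding $M = M(d)$ for which this Fermat equation admits a nontrivial $k$-solution.

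If $d$ is odd, $(1,-1)$ with $M=2$ works. In characteristic $p > 0$, $(1,\ldots,1)$ with $M = p$ works (non-realness is automatic). The remaining case is $\operatorname{char}(k) = 0$ and $d$ even. Let $\Sigma_d \subseteq k$ denote the set of sums of $d$-th powers and $\Sigma_d^\times := \Sigma_d \cap k^\times$. The identity $(\sum x_i^d)(\sum y_j^d) = \sum (x_iy_j)^d$ together with $(k^\times)^d \subseteq \Sigma_d^\times$ shows that the image of $\Sigma_d^\times$ in the finite group $k^\times/(k^\times)^d$ is a multiplicative subsemigroup, and hence a subgroup (any subsemigroup of a finite group is a subgroup); lifting, $\Sigma_d^\times$ itself is a finite-index subgroup of $k^\times$, closed under multiplicative inversion. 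Once $-1 \in \Sigma_d^\times$ is established, any representation $-1 = b_1^d + \cdots + b_r^d$ yields the nontrivial Fermat solution $(b_1,\ldots,b_r,1)$ with $M = r+1$.

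The remaining task, $-1 \in \Sigma_d^\times$, is the main obstacle. My plan is to prove the stronger claim $(k^\times)^2 \subseteq \Sigma_d^\times$: combined with non-realness $-1 = y_1^2 + \cdots + y_s^2$ (where $s = s(k) < \infty$) and additive closure of $\Sigma_d$, this gives $-1 \in \Sigma_d$. Note that $\Sigma_d^\times$ already contains $(k^\times)^{2d}$ (since $x^{2d} = (x^2)^d$) and, being a subgroup containing $\bZ_{>0}$, also all positive rationals, so the task reduces to showing that every square class in the finite quotient $k^\times/(k^\times)^{2d}$ is hit by some element of $\Sigma_d^\times$. This should be doable via a further pigeonhole in this finite group, exploiting the abundant supply of elements $1 + a^d \in \Sigma_d^\times$ to traverse the remaining classes. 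Executing this last combinatorial step is the heart of the proof; once it is done, one obtains $N_k(d)$ on the order of $|k^\times/(k^\times)^d| \cdot s(k) \cdot s_d$, where $s_d$ bounds the length of the square-as-$d$-th-power representations, and the proposition follows.
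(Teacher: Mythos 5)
Your opening pigeonhole is the same as the paper's: with $N := |k^\times/(k^\times)^d|$, taking $n > N(M-1)$ forces $M$ coefficients into one coset, and after rescaling the problem reduces to finding a nontrivial solution of $y_1^d + \cdots + y_M^d = 0$; the cases $d$ odd and $\operatorname{char}(k) = p > 0$ are handled correctly. The gap is that you never complete the remaining case $\operatorname{char}(k) = 0$, $d$ even, which you rightly identify as the crux. Your structural observations are sound — $\Sigma_d^\times$ contains $(k^\times)^d$, is multiplicatively closed, has subgroup image in the finite group $k^\times/(k^\times)^d$, and is therefore a finite-index subgroup of $k^\times$ containing $\bQ_{>0}$ — but the deferred step $(k^\times)^2 \subseteq \Sigma_d^\times$ is not a genuine reduction: it is \emph{equivalent} to the target $-1 \in \Sigma_d$. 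One implication you give (non-realness plus $(k^\times)^2 \subseteq \Sigma_d^\times$ yields $-1 \in \Sigma_d$); for the converse, if $-1 \in \Sigma_d$ then $\Sigma_d$ is closed under subtraction, hence is a $\bQ$-subalgebra, and a Vandermonde inversion of the identities $(a+j)^d \in \Sigma_d$, $j = 0, \ldots, d$, gives $a \in \Sigma_d$ for every $a$, so $\Sigma_d = k$. Thus the ``further pigeonhole'' you propose would have to establish the entire result on its own, and the sketched mechanism does not obviously do so: the elements $1 + a^d$ lie in $\Sigma_d^\times$ by construction, so merely producing more of them gives no information about whether some coset of $\Sigma_d^\times$ containing a square is being missed.

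This is exactly where the paper invokes an external, expressly ``non-trivial'' ingredient: a theorem of Joly \cite[Theorem~6.15]{Joly}, which supplies a representation $-1 = c_1^d + \cdots + c_m^d$ for non-real fields of characteristic $0$. (In positive characteristic, as you note, $-1$ is a sum of copies of $1 = 1^d$.) With that input, your argument is correct and recovers a bound of the same shape as the paper's, $N_k(d) \le |k^\times/(k^\times)^d| \cdot m$ with $m$ the length of that representation. As it stands, though, the blind proof reformulates Joly's theorem rather than proving it, and the one step you label ``the heart of the proof'' is exactly the step that is missing.
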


\begin{proof}
Let $d$ be given. Let $n$ be the cardinality of $k^{\times}/(k^{\times})^d$, and let $b_1, \ldots, b_n \in k^{\times}$ be representatives for the quotient. Write $-1=c_1^d+\cdots+c_m^d$, where $c_1, \ldots, c_m \in k$. If $k$ has positive characteristic $p$ then it contains $\bF_p$, and it is easy to see that one has such an expression in $\bF_p$. If $k$ has characteristic~0 then the existence of such an expression is a non-trivial result of Joly \cite[Theorem~6.15]{Joly}.

We claim that $N_k(d) \le nm$. Indeed, consider an equation
\begin{displaymath}
a_1 x_1^d + \cdots + a_{nm+1} x_{nm+1}^d = 0.
\end{displaymath}
If any $a_i$ vanishes then we can simply take $x_i=1$ and $x_j=0$ for $j \ne i$. Thus assume all $a_i$'s are non-zero. Each one belongs to one of the $n$ classes of $d$th powers, and so one of these classes occurs at least $m+1$ times; reordering if necessary, assume that $a_1, \ldots, a_{m+1}$ all belong to the same class, say that of $b_s$. Writing $a_i=b_s (a_i')^d$ for $1 \le i \le m+1$, we have
\begin{displaymath}
a_1 x_1^d+ \cdots + a_{m+1} x_{m+1}^d = b_s( (a_1' x_1)^d + \cdots + (a_{m+1}' x_{m+1})^d).
\end{displaymath}
We now obtain a non-trivial solution to the original equation by taking $x_i=c_i/a_i'$ for $1 \le i \le m$ and $x_{m+1}=1/a_{m+1}'$ and $x_i=0$ for $i>m+1$.
\end{proof}

\begin{corollary}
The following fields are Brauer:
\begin{enumerate}
\item any finite field,
\item any finite extension of $\bQ_p$, and 
\item the field $\bC\lpp t \rpp$ of complex Laurent series.
\end{enumerate}
\end{corollary}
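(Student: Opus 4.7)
The plan is to apply the preceding proposition to each of the three fields, which reduces the corollary to verifying two conditions: that $k$ is non-real and that $k^{\times}/(k^{\times})^d$ is finite for every $d \ge 1$. None of the verifications should be deep; the content has already been packed into the proposition, and what remains is essentially bookkeeping with standard facts about each kind of field.

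For (a), if $k = \bF_q$, then $k^{\times}$ is cyclic of order $q-1$, so $k^{\times}/(k^{\times})^d$ has order $\gcd(d,q-1)$, in particular finite. For non-realness I would separate characteristics: in characteristic $2$ we have $-1 = 1 = 1^2$, while for odd $q$ a pigeonhole argument does it---the sets $\{x^2 : x\in\bF_q\}$ and $\{-1-y^2 : y\in\bF_q\}$ each have cardinality $(q+1)/2$, so they must meet, exhibiting $-1$ as a sum of two squares.

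For (b), for a finite extension $k/\bQ_p$, I would invoke the classical structure theorem for the unit group of a local field: as abelian groups, $k^{\times} \cong \bZ \oplus \mu(k) \oplus U^{(1)}$, where $\mu(k)$ is the finite group of roots of unity in $k$ and $U^{(1)} = 1+\fm$ is a finitely generated $\bZ_p$-module (via the $p$-adic logarithm, away from torsion). Each summand has finite quotient by $d$th powers, hence so does $k^{\times}$. For non-realness, one notes that $-1$ is already a sum of two squares in $\bQ_p$ for odd $p$ (apply Hensel's lemma to a mod-$p$ solution of $x^2+y^2+1=0$, which exists by the same pigeonhole argument as above) and a sum of four squares in $\bQ_2$; these expressions remain valid in the extension $k$.

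For (c), if $k = \bC\lpp t\rpp$, I would use the normal form: every nonzero element is uniquely of the form $ct^nu$ with $c\in\bC^{\times}$, $n\in\bZ$, and $u\in 1+t\bC[[t]]$. The group $\bC^{\times}$ is $d$-divisible since $\bC$ is algebraically closed, and the group of $1$-units is also $d$-divisible because the binomial series $(1+v)^{1/d} = \sum_{j\ge 0} \binom{1/d}{j}v^j$ converges $t$-adically for $v\in t\bC[[t]]$; hence $k^{\times}/(k^{\times})^d \cong \bZ/d\bZ$. Non-realness is immediate from $-1 = i^2 \in \bC \subset k$. The most substantive input anywhere in the argument is the local-field structure theorem used in (b), but that is entirely classical, so I do not expect any genuine obstacle.
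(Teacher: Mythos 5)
Your proposal is correct and matches the paper's intent: the corollary is stated without proof as an immediate consequence of the preceding proposition, and you have supplied exactly the routine verifications (finiteness of $k^{\times}/(k^{\times})^d$ and non-realness) for each of the three fields, using the standard structure of $\bF_q^{\times}$, of local-field unit groups, and of $1$-units in $\bC\lpp t\rpp$ via the binomial series. The only cosmetic remark is that for $\bC\lpp t\rpp$ one could alternatively invoke the paper's subsequent corollary on $k\lpp t\rpp$ (applied with $k=\bC$, which is Brauer because it is algebraically closed), but your direct computation of $k^{\times}/(k^{\times})^d\cong\bZ/d\bZ$ is just as short and is clearly what the placement of the statement suggests.
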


\begin{proposition} \label{prop:ext-few}
Let $k$ be a Brauer field, and let $K/k$ be a field extension. Suppose that $K^{\times}/(k^{\times} \cdot (K^{\times})^d)$ is finite for all $d \ge 1$. Then $K$ is a Brauer field.
\end{proposition}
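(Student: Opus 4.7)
The plan is to imitate the proof of the previous proposition, letting $k$ itself play the role that the subfield $\bF_p$ (or Joly's theorem) played there; because $k$ is already assumed Brauer, the argument is actually a bit simpler, as we will not need to exhibit $-1$ as a sum of $d$-th powers. Fix $d \ge 1$, let $n$ denote the (finite) cardinality of $K^{\times}/(k^{\times}\cdot(K^{\times})^d)$, pick representatives $b_1,\ldots,b_n \in K^{\times}$, and set $M := N_k(d)$. I claim $N_K(d) \le nM$.

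To verify this, consider an equation $a_1 x_1^d + \cdots + a_{nM+1} x_{nM+1}^d = 0$ with $a_i \in K$. If some $a_i$ vanishes, take $x_i=1$ and the others $0$. Otherwise all coefficients lie in $K^\times$, and by pigeonhole on $nM+1$ elements distributed among $n$ cosets, some coset of $k^\times(K^\times)^d$ contains at least $M+1$ of them. After relabeling, assume that $a_1,\ldots,a_{M+1}$ all lie in the coset of $b_s$, so that we may write $a_i = b_s c_i (a_i')^d$ with $c_i \in k^\times$ and $a_i' \in K^\times$ for $1 \le i \le M+1$. Setting $x_j=0$ for $j>M+1$ reduces the original equation to
\[
b_s \sum_{i=1}^{M+1} c_i y_i^d \,=\, 0, \qquad y_i := a_i' x_i.
\]
Since $b_s \neq 0$ and the $c_i$ lie in $k^\times$, it suffices to produce a non-trivial solution to $\sum_{i=1}^{M+1} c_i y_i^d = 0$ in $k$. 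Because $M+1 > N_k(d)$ and $k$ is a Brauer field, such a solution exists; then $x_i := y_i/a_i' \in K$ (with $x_j=0$ for $j>M+1$) gives a non-trivial $K$-solution to the original equation.

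There is no serious obstacle here. The only conceptual observation beyond the previous proposition is that the hypothesis $|K^\times/(k^\times(K^\times)^d)|<\infty$ is precisely what allows one, after pigeonholing, to simultaneously absorb the $d$-th power factor $(a_i')^d$ into the variable $x_i$ and push the remaining scalar $c_i$ down into $k$; at that point the Brauer property of $k$ itself closes the argument.
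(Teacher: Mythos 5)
Your proof is correct and follows essentially the same route as the paper's: pigeonhole the coefficients into cosets of $k^\times \cdot (K^\times)^d$, absorb the $d$-th power factor into the variable, and use the Brauer property of $k$ on the remaining coefficients in $k^\times$. The only differences are notational (your $c_i, a_i', M$ are the paper's $a_i', a_i'', m$).
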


\begin{proof}
Let $d$ be given. Let $n$ be the cardinality of $K^{\times}/(k^{\times} \cdot (K^{\times})^d)$, and let $b_1, \ldots, b_n \in K^{\times}$ be representatives for the quotient. Let $m=N_k(d)$. We claim that $N_K(d) \le nm$. Indeed, consider an equation
\begin{displaymath}
a_1 x_1^d+\cdots+a_{nm+1} x_{nm+1}^d =0.
\end{displaymath}
If some $a_i$ vanishes then we have a non-trivial solution, so we can assume each $a_i$ is non-zero. Reordering if necessary, we can suppose that $a_1, \ldots, a_{m+1}$ all belong to the class of some $b_s$. For $1 \le i \le m+1$, write $a_i=b_s a_i' (a_i'')^d$ with $a'_i \in k$ and $a''_i \in K$. By assumption, we can find $y_1, \ldots, y_{m+1} \in k$, not all zero, such that $a_1' y_1^d+\cdots+a_{m+1}' y_{m+1}^d=0$. We thus obtain a solution to the original equation by taking $x_i=y_i/a_i''$ for $1 \le i \le m+1$ and $y_i=0$ for $i>m+1$.
\end{proof}

\begin{corollary}
If $k$ is a Brauer field of characteristic~0 then the field of Laurent series $k\lpp t \rpp$ is a Brauer field.
\end{corollary}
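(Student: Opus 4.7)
The plan is to apply Proposition~\ref{prop:ext-few} with $K = k\lpp t \rpp$. So the task reduces to verifying that the group $K^\times / (k^\times \cdot (K^\times)^d)$ is finite for every $d \ge 1$; in fact I expect to show it has order dividing $d$.

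First I would decompose $K^\times$ using the $t$-adic valuation $v \colon K^\times \twoheadrightarrow \bZ$. The kernel is the unit group $k[[t]]^\times$, and any $u \in k[[t]]^\times$ can be written as $u = u_0 (1 + t \cdot w)$ with $u_0 = u(0) \in k^\times$ and $w \in k[[t]]$. Thus every $a \in K^\times$ has the form $a = t^n \cdot u_0 \cdot (1 + tw)$ for some $n \in \bZ$, $u_0 \in k^\times$, and $w \in k[[t]]$.

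The key step is Hensel's lemma. Since $k$ has characteristic~$0$, the integer $d$ is invertible in $k$, and hence in the complete discrete valuation ring $k[[t]]$. Apply Hensel's lemma to the polynomial $F(X) = X^d - (1 + tw) \in k[[t]][X]$: at $X = 1$ we have $F(1) \in (t)$ while $F'(1) = d$ is a unit. So $F$ has a root in $k[[t]]$, which means $1 + t\cdot k[[t]] \subseteq (k[[t]]^\times)^d \subseteq (K^\times)^d$.

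Combining these, the class of $a = t^n u_0 (1 + tw)$ in $K^\times/(k^\times \cdot (K^\times)^d)$ equals the class of $t^n$, and this depends only on $n$ modulo $d$. Hence the group has order at most $d$, and Proposition~\ref{prop:ext-few} yields the conclusion. The only nontrivial ingredient is Hensel's lemma, and there is no real obstacle; the characteristic~$0$ hypothesis is used precisely to guarantee that $d$ is invertible in $k$, which is what makes the Henselian argument run.
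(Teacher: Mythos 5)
Your proposal is correct and follows essentially the same route as the paper: apply Proposition~\ref{prop:ext-few} after decomposing a Laurent series as $c\,t^n g(t)$ with $g$ a $1$-unit and observing that $1$-units are $d$th powers, so that the quotient $K^\times/(k^\times\cdot(K^\times)^d)$ is represented by $1,t,\ldots,t^{d-1}$. The paper simply asserts that $g(t)$ is a $d$th power; you supply the standard justification via Hensel's lemma (using that $d$ is invertible in characteristic~$0$), which is a fine way to fill in that step.
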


\begin{proof}
Suppose $f(t)$ is a non-zero Laruent series with coefficients in $k$. We can then write $f(t)= c t^n g(t)$ where $c \in k$ and $g(t)$ is a power series with constant coefficient~1. Such a $g(t)$ is a $d$th power, for any $d \ge 1$. Thus if $K=k\lpp t \rpp$ then we see that $K^{\times}/(k^{\times} (K^{\times})^d)$ has cardinality $d$, with the classes represented by $1, t, \ldots, t^{d-1}$.
\end{proof}

\subsection{\texorpdfstring{$(C_r)$}{C\_r} fields}

Following Lang \cite{Lang}, we say that a field $k$ satisfies property \defn{$(C_r)$} if for any $d \ge 1$,
any homogeneous polynomial of degree $d$ in $>d^r$ variables with coefficients in $k$ has a
nonzero solution in $k$. It is clear that a $(C_r)$ field is a Brauer field.

A field is $(C_0)$ if and only if it is algebraically closed. The $(C_1)$ condition is also known as quasi-algebraically closed; some examples:
\begin{enumerate}
\item Any finite field is $(C_1)$; this is the Chevalley--Warning theorem \cite[Theorem~2.3]{Greenberg}.
\item If $k$ is algebraically closed and $K/k$ is finitely generated of transcendence degree one then $K$ is $(C_1)$; this is Tsen's theorem \cite[Theorem~1.2]{Greenberg}.
\item If $E/\bQ_p$ is a finite extension then the maximal unramified extension of $E$, denoted $E^{\rm un}$, is $(C_1)$ \cite[Theorem~12]{Lang}.
\item The subfield of $\ol{\bQ}_p\lpp t \rpp$ consisting of convergent series is $(C_1)$ \cite[Theorem~12]{Lang}.
\end{enumerate}
Artin conjectured that $\bQ_p$ is $(C_2)$, but this was disproved by Terjanian \cite[Theorem~7.4]{Greenberg}; the Ax--Kochen theorem provides a positive result in this direction \cite[Theorem~7.5]{Greenberg}. A theorem of Lang--Nagata asserts that if $k$ is a $(C_r)$ field and $k'/k$ is a finitely generated extension of transcendence degree $s$ then $k'$ is a $(C_{r+s})$ field
\cite[Theorem~3.4]{Greenberg}. It is also true that if $k$ is $(C_r)$ then the field $k\lpp t \rpp$ of Laurent series is $(C_{r+1})$ \cite[Theorem~4.8]{Greenberg}. See \cite[Ch.~21]{FriedJarden} for additional details.

We extract a few concrete examples from the above discussion:

\begin{proposition}
Let $k$ be a finite field, $E^{\rm un}$ (for $E/\bQ_p$ finite), or an algebraically closed field. Then:
\begin{enumerate}
\item Any finitely generated extension of $k$ is a Brauer field.
\item The field of Laurent series $k\lpp t \rpp$ is a Brauer field.
\end{enumerate}
\end{proposition}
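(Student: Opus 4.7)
The plan is simply to chain together the three facts about $(C_r)$ fields that were just recorded in the discussion preceding the proposition: (i) a $(C_r)$ field is a Brauer field, (ii) the Lang--Nagata theorem that a finitely generated extension of a $(C_r)$ field of transcendence degree $s$ is $(C_{r+s})$, and (iii) if $k$ is $(C_r)$ then $k\lpp t\rpp$ is $(C_{r+1})$. So the argument reduces to checking that each of the three starting fields is $(C_r)$ for some finite~$r$.

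First, I would dispense with the base cases. Any finite field is $(C_1)$ by Chevalley--Warning; any $E^{\mathrm{un}}$ with $E/\bQ_p$ finite is $(C_1)$ by Lang's theorem; and any algebraically closed field is $(C_0)$ tautologically. In every case $k$ itself is $(C_r)$ for some explicit finite $r \in \{0,1\}$.

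For part (a), I would invoke the Lang--Nagata theorem: a finitely generated extension $k'/k$ of transcendence degree $s < \infty$ is $(C_{r+s})$, and hence a Brauer field. (Note that $s=0$ is allowed, covering the case of finite algebraic extensions, and $k'=k$ itself is handled by taking $s=0$.) For part (b), I would apply the Laurent series fact to conclude that $k\lpp t\rpp$ is $(C_{r+1})$, hence a Brauer field.

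There is no genuine obstacle here: the proposition is a formal consequence of the cited results, and the only task is to observe that the three classes of base fields listed all satisfy some $(C_r)$ with $r$ finite. The two parts are then a one-line application each of Lang--Nagata and the Laurent series theorem respectively.
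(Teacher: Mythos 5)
Your proposal is correct and matches the paper's intent exactly: the proposition is stated without a separate proof precisely because it is a direct consequence of the $(C_r)$ facts recorded just above it (Chevalley--Warning, Lang's theorem, Lang--Nagata, and the Laurent series result), which is exactly the chain you describe.
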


\subsection{Extension fields}

It is natural to ask how the Brauer property behaves in field extensions. We have already seen one such result (Proposition~\ref{prop:ext-few}). We now give another:

\begin{proposition}
A finite extension $K$ of a Brauer field $k$ is a Brauer field.
\end{proposition}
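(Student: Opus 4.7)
The plan is to prove this by the classical restriction-of-scalars device, reducing a diagonal form over $K$ to a system of forms over $k$ and then applying Brauer's original theorem (Theorem~\ref{thm:brauer}) over the Brauer field $k$. The argument is essentially Brauer's own, but packaged to leverage that his theorem applies not just to diagonal forms but to arbitrary tuples of homogeneous polynomials over $k$.

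Concretely, set $e = [K:k]$ and fix a $k$-basis $\alpha_1, \ldots, \alpha_e$ of $K$. Suppose we are given a diagonal equation $a_1 x_1^d + \cdots + a_n x_n^d = 0$ over $K$. Writing each unknown as $x_i = \sum_{j=1}^{e} y_{ij} \alpha_j$ with $y_{ij}$ ranging over $k$, the expression $a_i x_i^d$ becomes a homogeneous polynomial of degree $d$ in the $k$-variables $y_{i1}, \ldots, y_{ie}$, taking values in $K$. Decomposing in the chosen basis gives $a_i x_i^d = \sum_{\ell=1}^e G_{i\ell}(y_{i1},\ldots,y_{ie})\, \alpha_\ell$ with each $G_{i\ell}$ a degree-$d$ form over $k$, and setting $F_\ell = \sum_{i=1}^n G_{i\ell}$ for $\ell = 1, \ldots, e$, the single equation over $K$ becomes the system $F_1 = \cdots = F_e = 0$ of $e$ homogeneous forms of degree $d$ in the $ne$ variables $y_{ij}$ over $k$.

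Now I would apply Theorem~\ref{thm:brauer} to the $e$-tuple $(F_1, \ldots, F_e)$ of degrees $(d, \ldots, d)$ over $k$: once $ne > \C{c:brauer}(d, \ldots, d)$, with $e$ copies of $d$, there exists a nonzero $k$-solution $(y_{ij})$. Since the $\alpha_j$ are linearly independent over $k$, such a solution produces $(x_1, \ldots, x_n) \in K^n$ not all zero and satisfying the original equation. Thus we may take $N_K(d) \le \C{c:brauer}(d, \ldots, d)$, and $K$ is a Brauer field.

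There is no real obstacle here; the one subtle point worth flagging is that the restriction of scalars of a single diagonal equation over $K$ yields $e$ coupled, non-diagonal equations over $k$, so the proof genuinely needs the full strength of Brauer's theorem over $k$ for systems of $r > 1$ forms, rather than only the defining diagonal property of a Brauer field. (One could alternatively deduce the proposition from Proposition~\ref{prop:ext-few} by showing that $K^\times/(k^\times\cdot (K^\times)^d)$ is finite, but the restriction-of-scalars route is both shorter and avoids any separability assumption on $K/k$.)
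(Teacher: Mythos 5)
Your proof is correct, but the paper takes a slicker route to the same endpoint. Rather than performing the full restriction of scalars (writing each $x_i = \sum_j y_{ij}\alpha_j$ and working with all $ne$ new $k$-variables), the paper keeps the original variables $x_1,\ldots,x_n$ and expands only the \emph{coefficients} of the diagonal form in a $k$-basis $b_1,\ldots,b_m$ of $K$: writing $a_i = \sum_j a_{ij}b_j$ gives $f = \sum_{j=1}^m b_j\, f_j(x_1,\ldots,x_n)$, where each $f_j = \sum_i a_{ij}x_i^d$ is a degree-$d$ form over $k$ in the same $n$ variables. A common nontrivial $k$-solution of $f_1 = \cdots = f_m = 0$, which Theorem~\ref{thm:brauer} supplies once $n > \C{c:brauer}((d,\ldots,d))$ with $m$ copies of $d$, is then \emph{a fortiori} a nontrivial $K$-solution of $f = 0$. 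Both proofs rest on the same key input --- Brauer's theorem over $k$ for a system of $[K:k]$ forms of degree $d$ --- but the paper searches only for $k$-rational solutions of $f$ inside $K^n$, trading a smaller search space for a reduction with far fewer variables (and, incidentally, a system that stays diagonal, though this is not used). Your Weil-restriction version is more robust in principle, since it would handle arbitrary forms over $K$ rather than just diagonal ones, but that generality is not needed here. The subtlety you flag --- that one needs the full Theorem~\ref{thm:brauer} for systems over $k$, not merely the defining property of a Brauer field --- is the right thing to note, and it applies to the paper's proof as well.
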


\begin{proof}
Let $m=[K:k]$ and let $b_1, \ldots, b_m$ be a $k$-basis for $K$. Let $f$ be a diagonal
form over $K$ of degree $d$ in the variables $x_1,\ldots,x_n$. Expressing the coefficients of $f$ in our basis for $K$ yields an expression
\[
f=\sum_{i=1}^m f_i(x_1,\ldots,x_n)\cdot b_i
\]
where the $f_i$ are homogeneous forms of degree $d$ over $k$ (indeed,
diagonal ones, but we will not need this). If $n$ exceeds the constant
$\C{c:brauer}((d,\ldots,d))$ from Brauer's Theorem~\ref{thm:brauer}, where
the number of copies of $d$ is $m$, then the equations $f_i=0$
have a common nontrivial solution over $k$. This is \emph{a fortiori}
a nontrivial solution of $f=0$ over $K$.
\end{proof}

In view of the above proposition, one might ask if $k(t)$ is a Brauer field when $k$ is. We do not know the answer, even in some basic cases, e.g., if $k$ is a non-real number field or $p$-adic field. Positive results in this direction are likely to be quite difficult, since analogous questions about quadratic forms are already difficult or open. Recall that the \emph{$u$-invariant} of a field $k$, denoted $u(k)$, is the maximal dimension of an anisotropic quadratic form over $k$. Brauer's theorem implies that a Brauer field has finite $u$-invariant. (In characteristic $\ne 2$, this follows directly from the definition of Brauer field and the existence of orthogonal bases.) It is at present an open problem if $u(k(t))$ is finite when $k$ is a non-real number field, though it is expected to be~8. It was only recently shown that $u(k(t))=8$ when $k$ is a $p$-adic field with $p \ne 2$ \cite{PS}. See \cite[Ch.~XIII, \S 6]{Lam} for more details.

\subsection{Semi-perfect fields}

Recall that the field $k$ is \defn{perfect} if it has
characteristic~0, or it has characteristic $p>0$ and $k=k^p$. We say
that $k$ is \defn{semi-perfect} if it has characteristic~0, or it has
characteristic $p>0$ and $[k:k^{p^n}]$ is finite for all $n \geq 1$. The
following characterization of semi-perfect fields was pointed out to
us by Michel Brion.

\begin{lemma} \label{lm:SemiPerfect}
A field $k$ of characteristic $p>0$ satisfies $[k:k^{p^n}]=[k:k^p]^n$ for
every $n \geq 0$ and is therefore semi-perfect if and only if $[k:k^p]$
is finite.
\end{lemma}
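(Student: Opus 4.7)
The plan is to prove the equivalence in two halves. The nontrivial direction is: if $[k:k^p]$ is finite, then $[k:k^{p^n}]=[k:k^p]^n$ for all $n\ge 0$; from this, the finiteness of $[k:k^{p^n}]$ for every $n$ (i.e.\ semi-perfection) is immediate. The converse direction is a tautology, since semi-perfect with $n=1$ says exactly that $[k:k^p]$ is finite.

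For the main identity, I would proceed by induction on $n$, using the tower
\[
k\supseteq k^{p}\supseteq k^{p^2}\supseteq\cdots\supseteq k^{p^n},
\]
so that the tower law gives
\[
[k:k^{p^n}]=\prod_{j=0}^{n-1}[k^{p^{j}}:k^{p^{j+1}}].
\]
The key point is then to show that every factor on the right equals $[k:k^p]$. This is where Frobenius comes in: the map $\phi_j\colon k\to k^{p^j}$, $x\mapsto x^{p^j}$, is an isomorphism of fields (injective because it is a nonzero ring homomorphism, surjective onto its image by definition), and it sends $k^p$ bijectively onto $k^{p^{j+1}}$. Hence it identifies the field extension $k^{p^j}/k^{p^{j+1}}$ with the extension $k/k^p$, so in particular $[k^{p^j}:k^{p^{j+1}}]=[k:k^p]$.

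Multiplying these equal factors produces $[k:k^{p^n}]=[k:k^p]^n$, and, assuming $[k:k^p]<\infty$, all of these degrees are finite, so $k$ is semi-perfect. I do not expect any real obstacle here; the only subtle point to state carefully is that $\phi_j$ is a genuine field isomorphism onto $k^{p^j}$ (which holds because the characteristic is $p$ and $\phi_j$ is not the zero map), so that it transports subfield inclusions and their degrees faithfully.
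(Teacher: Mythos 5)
Your proof is correct and uses essentially the same idea as the paper: the Frobenius isomorphism transports the extension $k/k^p$ to each step $k^{p^j}/k^{p^{j+1}}$. The only cosmetic difference is that you unroll the tower into a product directly, whereas the paper peels off one factor and recurses by induction.
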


(The index $[k:k^p]$ is called the {\em imperfect degree} of $k$ in
\cite[Section 2.7]{FriedJarden}.)

\begin{proof}
For $n>0$ we have
\[ [k:k^{p^n}]=[k:k^p]\cdot[k^p:k^{p^n}]. \]
The Frobenius map $k \to k^p,\ x \mapsto x^p$ is a field
isomorphism that maps $k^{p^{n-1}}$ onto $k^{p^n}$, so the right-hand
side equals $[k:k^p] \cdot [k:k^{p^{n-1}}]$, and we are done by
induction.
\end{proof}

The field $\bF_p(t_1, \ldots, t_m)$ is semi-perfect for any $m \ge 0$, but is only perfect when $m=0$. The field $\bF_p(t_1, t_2, \ldots)$ is not semi-perfect. The semi-perfect condition appeared in \cite{ESS1, ESS2}, and will show up in \S \ref{ss:astr} below for similar reasons. Due to this, the following observation is important:

\begin{proposition} \label{prop:semiperfect}
A Brauer field is semi-perfect.
\end{proposition}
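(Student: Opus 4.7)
The plan is to reduce to characteristic $p$ and then use Lemma~\ref{lm:SemiPerfect}, so that it suffices to prove $[k:k^p]$ is finite. The characteristic~$0$ case is immediate from the definition of semi-perfect, so I would assume $\mathrm{char}(k) = p > 0$ throughout.

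The key observation to exploit is the following translation. For any $a_1, \ldots, a_n \in k$ and any $x_1, \ldots, x_n \in k$, the equation
\[
a_1 x_1^p + a_2 x_2^p + \cdots + a_n x_n^p = 0
\]
is exactly a $k^p$-linear relation $\sum a_i \cdot (x_i^p) = 0$ among $a_1, \ldots, a_n$ with coefficients $x_i^p \in k^p$, and this relation is nontrivial precisely when at least one $x_i$ is nonzero. So one should read the defining property of a Brauer field as a statement about $k^p$-linear dependence.

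Concretely, applying the Brauer property with $d = p$, there is a constant $N_k(p)$ such that for every $n > N_k(p)$ and every choice of coefficients $a_1, \ldots, a_n \in k$, the above equation has a nontrivial solution. By the observation in the previous paragraph, this says that any $n > N_k(p)$ elements of $k$ are $k^p$-linearly dependent. Therefore $[k : k^p] \le N_k(p) < \infty$, and Lemma~\ref{lm:SemiPerfect} gives that $k$ is semi-perfect.

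There is really no hard step here; the only thing to notice is that the Frobenius image $\{x^p : x \in k\}$ is exactly $k^p$, so a diagonal form of degree $p$ over $k$ is tautologically a $k^p$-linear form on the subspace of $p$-th powers. Once this is recognized, the Brauer property immediately bounds the imperfect degree.
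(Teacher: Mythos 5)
Your proof is correct and is essentially identical to the paper's: both reduce to positive characteristic, observe that a nontrivial solution to a degree-$p$ diagonal form over $k$ is precisely a nontrivial $k^p$-linear dependence of the coefficients, conclude $[k:k^p] \le N_k(p)$, and then invoke Lemma~\ref{lm:SemiPerfect}.
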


\begin{proof}
It suffices to treat the case of positive characteristic, so suppose
$k$ is a Brauer field of characteristic $p>0$. Set $m=N_k(p)$. We
claim that $[k:k^p] \leq m$. Indeed, suppose that $a_1, \ldots,
a_{m+1}$ are elements of $k$. By the definition of $N_k$ the equation
$a_1 x_1^p+\ldots+a_{m+1} x_{m+1}^p=0$ has a nonzero solution.
This is exactly a non-trivial linear dependence of $a_1, \ldots,
a_{m+1}$ over the field $k^p$, which proves the claim. The proposition
now follows from Lemma~\ref{lm:SemiPerfect}.
\end{proof}

\section{Background on strength} \label{s:prelim}

\subsection{Basic definitions}

A \defn{multi-degree} is a tuple $\ul{d}=(d_1, \ldots, d_r)$ of integers
with $d_1 \ge d_2 \ge \cdots \ge d_r \geq 1$. The \defn{length}
of a multi-degree, denoted $\len(\ul{d})$, is the length of the
tuple. We compare multi-degrees lexicographically. The collection of
all multi-degrees is well-ordered.

For a $k$-vector space $V$, we let $\cP(V)$ be the space polynomial functions
on $V$. Since we assume that $k$ is infinite, $\cP(V)$ can be identified with 
the symmetric algebra $\Sym(V^*)$. We let $\cP_d(V)=\Sym^d(V^*)$ be the space of degree $d$ polynomial functions on $V$, and put $\cP_{\ul{d}}(V) = \cP_{d_1}(V) \times \cdots \times \cP_{d_r}(V)$.

For an element $\ul{f} \in \cP_{\ul{d}}(V)$, we write $Z(\ul{f})$ for
the closed subvariety of $V$ defined by the equations $f_i=0$ for $1 \le i \le r$. Here we regard $V$ not just
as a vector space over $k$ but also as an affine scheme over $k$. This
will not lead to confusion.

\subsection{Strength} \label{ss:strength}

We recall the definition of strength. It seems that Schmidt \cite{Schmidt} was the first to study this concept. It has since been rediscovered several times. We learned the ideas through the work of Ananyan and Hochster \cite{AH}, and therefore use their terminology.

\begin{definition}
The \defn{strength} of a homogeneous polynomial $f \in \cP_d(V)$ of degree $d>0$, denoted $\str(f)$, is the minimal nonnegative
integer $s$ for which there is an expression
\[
f = g_1h_1+ \cdots g_s h_s,
\]
where $g_i$ and $h_i$ are homogeneous polynomials on $V$ of degrees
$<d$; if no such expression exists (which happens only when $d=1$),
then $\str(f)$ is defined to be $\infty$. The \defn{strength} of a
tuple $\ul{f} \in \cP_{\ul{d}}(V)$, denoted $\str(\ul{f})$, is the
minimal strength of a nontrivial $k$-linear combination of polynomials
in $\ul{f}$ of the same degree. For the empty tuple, this minimum is
defined to be $\infty$.
\end{definition}

We now give a simple but useful fact about the behavior of strength under restriction.

\begin{proposition} \label{prop:restriction}
Let $\ul{f} \in \cP_{\ul{d}}(V)$, let $W$ be a linear subspace of $V$,
and let $\ul{f}|_W  \in \cP_{\ul{d}}(W)$ denote the tuple $(f_1|_W,\ldots,f_r|_W)$. Then
$\str(\ul{f}) \leq \str(\ul{f}|_W) + (\dim V - \dim W)$.
\end{proposition}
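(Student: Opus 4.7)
The plan is to fix a nonzero linear combination $g=\sum_{i} c_i (f_i|_W)$ of degree $d$ realizing the strength $s := \str(\ul{f}|_W)$ on $W$, and to lift it to the same combination $\tilde g := \sum_i c_i f_i$ on $V$, showing $\str(\tilde g) \le s + (\dim V - \dim W)$. Since $\tilde g$ is itself a nonzero linear combination of $f_i$'s of common degree, this yields $\str(\ul{f}) \le \str(\tilde g) \le s + (\dim V - \dim W)$, which is what we want. The case $s = \infty$ is vacuous, so we may assume $s$ is finite, which forces $d \ge 2$ (nonzero degree-$1$ polynomials have infinite strength).

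Writing $s$ as a strength decomposition, we have
\[
g \;=\; \sum_{j=1}^{s} p_j\, q_j
\]
with $p_j, q_j \in \cP_{<d}(W)$ of positive degree. To lift this to $V$, choose a linear complement $U$ to $W$, giving a direct sum decomposition $V = W \oplus U$, a projection $\pi\colon V \to W$, and a basis $\ell_1, \ldots, \ell_m$ (with $m := \dim V - \dim W$) of linear forms on $V$ that vanish on $W$ and span $U^*$.

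Now $\tilde g|_W = g$, so $\tilde g - \pi^* g$ vanishes on $W$ and therefore lies in the ideal generated by $\ell_1, \ldots, \ell_m$; hence
\[
\tilde g \;=\; \pi^* g \;+\; \sum_{j=1}^{m} \ell_j\, r_j
\]
for some $r_j \in \cP_{d-1}(V)$. Pulling back the strength decomposition of $g$ via $\pi$ gives $\pi^* g = \sum_{j=1}^s (\pi^* p_j)(\pi^* q_j)$, where each factor has positive degree and degree $< d$ on $V$. Substituting yields
\[
\tilde g \;=\; \sum_{j=1}^{s} (\pi^* p_j)(\pi^* q_j) \;+\; \sum_{j=1}^{m} \ell_j\, r_j,
\]
an expression of $\tilde g$ as a sum of $s + m$ products of homogeneous polynomials of positive degree strictly less than $d$ (using $d \ge 2$ for the second sum, so that $r_j$ has positive degree and $\ell_j$ has degree $< d$). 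This gives $\str(\tilde g) \le s + m$, completing the proof.

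There is no real obstacle here; the argument is essentially bookkeeping once one picks the complement $U$. The only mild subtlety is the degree-$1$ case, which is handled by observing that finite $\str(\ul f|_W)$ forces the witnessing combination to have degree $\ge 2$, ensuring that $\ell_j \cdot r_j$ is a valid contribution to a strength decomposition on $V$.
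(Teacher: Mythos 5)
Your proof is correct and takes essentially the same approach as the paper: lift the minimizing same-degree combination to $V$, split it into the pullback of its restriction plus an element of the ideal of $W$, and count terms (the paper phrases this in coordinates rather than via $\pi^*$). The only minor gap is that finite $s$ forces $d\ge 2$ only if the minimizing combination $g$ is a \emph{nonzero} polynomial, which you tacitly assume; the paper's reduction to $\ul d=(d)$ relies on the same tacit assumption.
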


\begin{proof}
It suffices to prove this in the case where $\ul{d}=(d)$; write
$f=f_1$. Choose (linear) coordinates $x_1,\ldots,x_n$ on $V$ where $W$ is the
common zero set of $x_1,\ldots,x_c$ and hence
$x_{c+1}|_W,\ldots,x_n|_W$ are a set of coordinates on $W$. We
may write 
\begin{displaymath}
f=x_1 h_1(x_1,\ldots,x_n) + \ldots + x_c h_c(x_1,\ldots,x_n) +
\tilde{f}(x_{c+1},\ldots,x_n).
\end{displaymath}
Then $f|_W=\tilde{f}(x_{c+1}|_W,\ldots,x_n|_W)$ has the same
strength as $\tilde{f}$
and the equation above shows that $\str(f) \leq c + \str(\tilde{f})$. 
\end{proof}

Diagonal forms play a central role in our paper; the following gives a
lower bound for their strength. 

\begin{proposition} \label{prop:diagstrength}
A diagonal form $a_1 x_1^d + \ldots + a_n x_n^d$ of degree $d>0$ with
$a_1,\ldots,a_n \in k$ nonzero has strength at least $n/2$,
provided that $d$ is not a multiple of the characteristic of $k$.
\end{proposition}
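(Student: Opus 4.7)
The plan is to compare the singular locus of $Z(f)$ with the common zero locus of the factors appearing in an optimal strength decomposition, and then read off the bound from a codimension count. Suppose $\str(f) = s$, fixing a witness $f = g_1 h_1 + \cdots + g_s h_s$ with each $g_i, h_i$ homogeneous of degree strictly between $0$ and $d$. The product rule gives $\partial f/\partial x_j = \sum_i \bigl((\partial_j g_i) h_i + g_i (\partial_j h_i)\bigr)$, which lies in the ideal $(g_1, h_1, \ldots, g_s, h_s)$. Hence the common zero locus $Y \subseteq V_{\bar k}$ of the $2s$ polynomials $g_1, h_1, \ldots, g_s, h_s$ is contained in the singular locus $\operatorname{Sing}(Z(f))$.

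I would then bound each side separately. By Krull's Hauptidealsatz, $Y$ has dimension at least $n - 2s$ in $V_{\bar k} = \mathbb{A}^n_{\bar k}$. On the other hand, for the diagonal $f$ the partials are $\partial f/\partial x_j = d\,a_j x_j^{d-1}$; since $d$ is invertible in $k$ by hypothesis and each $a_j$ is nonzero, these vanish simultaneously only when every $x_j=0$. Thus $\operatorname{Sing}(Z(f)) = \{0\}$ is zero-dimensional.

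Combining the inclusion $Y \subseteq \operatorname{Sing}(Z(f))$ with the two dimension counts forces $n - 2s \le 0$, i.e.\ $s \ge n/2$. I do not anticipate a serious obstacle; the argument is essentially a one-line singular-locus count. The only delicate ingredient is the characteristic hypothesis, which is precisely what keeps the partials from vanishing identically and forces $\operatorname{Sing}(Z(f))$ to be just the origin; without it the conclusion can fail outright, since $\sum a_i x_i^p$ becomes a perfect $p$-th power in characteristic $p$ (after adjoining $p$-th roots of the coefficients), collapsing its strength.
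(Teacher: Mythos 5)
Your argument is correct and is essentially the same as the paper's: both differentiate to show that the partials of $f$ lie in the ideal generated by the factors $g_i, h_i$ of a strength decomposition, then apply Krull's codimension bound to the zero locus of those $2s$ forms. The paper phrases the endgame projectively (the ideal contains every $x_j^{d-1}$, so its projective zero locus is empty, forcing codimension $\ge n$), while you phrase it via the affine singular locus being $\{0\}$; these are the same count.
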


\begin{proof}
Let $f=a_1 x_1^d + \ldots + a_n x_n^d$, and suppose that 
\[
f=g_1\cdot h_1 + \ldots + g_s\cdot h_s
\]
where the $g_i$ and $h_i$ are homogeneous of degree $<d$.
Differentiating with respect to $x_j$, we find 
\[ a_j d x_j^{d-1} = \sum_{i-1}^s \left(\frac{\partial g_i}{\partial x_j}
\cdot h_i + g_i \cdot \frac{\partial h_i}{\partial x_j} \right). \]
Since, by assumption, $a_jd$ is nonzero in $k$, we
find that the ideal generated by the homogeneous polynomials
$g_1,\ldots,g_s,h_1,\ldots,h_s$ contains all $x_j^{d-1}$. But an ideal
generated by $2s$ homogeneous polynomials defines a variety in $\bP_k^n$
of codimension $\leq 2s$, so $2s \geq n$.
\end{proof}

\subsection{Strength over field extensions} \label{ss:astr}

In our proofs we will often face the following situation: we have a
tuple $\ul{f}$ of homogeneous polynomials over $k$ and have assumed
that $\str(\ul{f})$ is large, but need the more geometric fact that the
strength of $\ul{f}$ is large when $\ul{f}$ is regarded as a tuple
of polynomials over an algebraic closure $\ol{k}$ of $k$.
We therefore define the \defn{absolute strength} of $\ul{f}$, denoted
$\astr(\ul{f})$, to be the strength of $\ul{f}$ over $\ol{k}$. 
The following example shows that $\str(\ul{f})$ and $\astr(\ul{f})$
can be arbitrarily far apart.

\begin{example}
Let $k=\bF_p(t_1,t_2,\ldots)$, where the $t_i$ are algebraically
independent over $\bF_p$. Then the polynomial
\[
f= t_1 x_1^p +\ldots + t_n x_n^p  =\left( t_1^{1/p} x_1 + \ldots + t_n^{1/p} x_n\right)^p
\]
has $\astr(f)=1$. But
$f$ has $\str(f) \geq n/2$ over $k$. Indeed, a variant of the argument
used in Proposition~\ref{prop:diagstrength} applies: suppose that 
\[
f=g_1 \cdot h_1 + \ldots + g_s\cdot h_s, \]
where $g_i,h_i \in k[x_1,\ldots,x_n]$ are homogeneous polynomials
of positive degrees. Only finitely many of the $t_j$ appear in the
coefficients of $g_i,h_i$, say $t_1,\ldots,t_m$.  We write
$g_i=g_i(t,x),h_i=h_i(t,x)$
where $t=(t_1,\ldots,t_m)$ and $x=(x_1,\ldots,x_n)$.  There exists a
finite extension $\bF_q$ of $\bF_p$ and a point $c \in \bF_q^m$ such that
the coefficients of the $g_i$ and the $h_i$, which are rational
functions in $t$, are all defined for $t=c$. For
$j=1,\ldots,n$ we have 
\[ x_j^p = \frac{\partial f}{\partial t_j} (c,x) = \sum_{i=1}^s
\left(\frac{\partial g_i}{\partial t_j} (c,x) h_i(c,x) +
g_i(c,x) \frac{\partial h_j}{\partial t_j} (c,x) \right)  \in \bF_q [x_1,\ldots,x_n] \]
and we conclude that $x_j^p$ lies in the ideal of $\bF_q[x_1,\ldots,x_n]$ generated by the
homogeneous polynomials $g_i(c,x),h_i(c,x)$ for $i=1,\ldots,s$. As in
the proof of Proposition~\ref{prop:diagstrength}, we conclude that $2s
\geq n$. We actually expect that $f$ has strength $n$ over $k$, but we do not have a proof.
\end{example}

The following theorem implies that examples like $f$
above only exist over non-semi-perfect fields.

\newC{c:astr}
\begin{theorem} \label{thm:astr}
Fix a multi-degree $\ul{d}$, a semi-perfect field $k$, and a nonnegative
integer $s$. Then for any $k$-vector space $V$ and any $\ul{f} \in
\cP_{\ul{d}}(V)$ with $\astr(\ul{f}) \leq s$ we have $\str(\ul{f})
\leq \C{c:astr}(\ul{d},s)$.
\end{theorem}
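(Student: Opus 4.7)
The plan is to extract a $k$-rational strength decomposition from a $\ol{k}$-rational one by expanding everything in a $k$-basis of a suitable finite extension $L/k$, and then to bound $[L:k]$ uniformly by combining the semi-perfect hypothesis with a structural input on strength varieties.

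Assume $\astr(\ul f)\le s$, so that some nonzero $\ol{k}$-linear combination $F=\sum_i c_i f_i$ of equal-degree entries of $\ul f$ admits a decomposition $F=\sum_{j=1}^s g_j h_j$ with $g_j,h_j$ homogeneous of strictly smaller degrees; all coefficients $c_i$, $g_j$, $h_j$ lie in some finite extension $L/k$. Choose a $k$-basis $b_1,\ldots,b_n$ of $L$ and structure constants $e_{\alpha\beta}^\gamma\in k$ with $b_\alpha b_\beta=\sum_\gamma e_{\alpha\beta}^\gamma b_\gamma$, and expand $c_i=\sum_\alpha c_{i\alpha}b_\alpha$, $g_j=\sum_\beta g_{j\beta}b_\beta$, $h_j=\sum_\gamma h_{j\gamma}b_\gamma$ with $c_{i\alpha}\in k$ and $g_{j\beta},h_{j\gamma}\in\cP(V)$ defined over $k$. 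Matching the $b_\delta$-component of $F=\sum_j g_j h_j$ yields, for each $\delta$, an identity
\[
\sum_i c_{i\delta}\, f_i \;=\; \sum_{j,\beta,\gamma} e_{\beta\gamma}^\delta\, g_{j\beta}\, h_{j\gamma}
\]
of polynomials over $k$; since the $c_i$ are not all zero, at least one $\delta$ gives a nonzero $k$-linear combination of the $f_i$, and hence $\str(\ul f)\le s\cdot[L:k]^2$.

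It remains to bound $[L:k]$ uniformly. Let $L^{\mathrm{s}}$ be the maximal separable subextension of $L/k$, so that $L/L^{\mathrm{s}}$ is purely inseparable of degree $p^N$. For a separable algebraic extension one has $[L^{\mathrm{s}}:(L^{\mathrm{s}})^p]=[k:k^p]$, and so Lemma~\ref{lm:SemiPerfect} applied to $L^{\mathrm{s}}$ gives $[L:L^{\mathrm{s}}]\le[L^{\mathrm{s}}:(L^{\mathrm{s}})^{p^N}]=[k:k^p]^N$. Thus it suffices to bound $N$ and $[L^{\mathrm{s}}:k]$ in terms of $\ul d$ and $s$ alone. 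I expect these two bounds to come from a structural result on the variety $X_s^{\ul d}\subset\cP_{\ul d}$ of strength-$\le s$ tuples: this variety is the image of the morphism $(g_j,h_j)_j\mapsto\sum_j g_j h_j$, and the Ananyan--Hochster theory \cite{AH} provides a description of $X_s^{\ul d}$ whose complexity is bounded independently of $\dim V$. From this one should be able to show that the field of definition of a $\ol{k}$-decomposition of a $k$-rational strength-$\le s$ tuple can always be taken of degree bounded purely in $\ul d$ and $s$. Establishing this uniform field-of-definition bound is the main obstacle, and combined with the basis expansion above it would yield $\C{c:astr}(\ul d,s)$ depending only on $\ul d$, $s$, and the imperfect degree $[k:k^p]$.
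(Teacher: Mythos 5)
The paper does not actually prove Theorem~\ref{thm:astr}: the text following the statement merely cites \cite{LZ2} and the forthcoming \cite{BDLZ} for large characteristic, and the forthcoming \cite{BDS2} for the general case. So there is no in-paper proof to compare against, and any complete argument here would be new content.

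Your basis-expansion reduction is correct and worth recording. If $F=\sum_i c_i f_i$ (a nontrivial linear combination of equal-degree entries, as required by the definition of $\str$ of a tuple) admits a decomposition $F=\sum_{j=1}^s g_j h_j$ with all data in a finite extension $L/k$, then writing $\cP_e(V)\otimes_k L=\bigoplus_\beta \cP_e(V)\otimes b_\beta$ and equating $b_\delta$-components does produce, for some $\delta$, a nontrivial $k$-linear combination of the $f_i$ expressed as a sum of at most $s[L:k]^2$ products of lower-degree $k$-forms; hence $\str(\ul f)\le s[L:k]^2$. The separable-degree identity $[L^{\mathrm s}:(L^{\mathrm s})^p]=[k:k^p]$ you invoke is also correct (for $L^{\mathrm s}/k$ finite separable, $L^{\mathrm s}=k(L^{\mathrm s})^p$, and $k$, $(L^{\mathrm s})^p$ are linearly disjoint over $k^p$).

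However, the argument then hinges entirely on a uniform field-of-definition bound: that $L$ can always be chosen with $[L^{\mathrm s}:k]$ and the inseparable exponent $N$ bounded purely in terms of $\ul d$ and $s$, \emph{independently of $\dim V$}. You flag this as ``the main obstacle,'' and it is indeed a genuine, unfilled gap --- not a routine detail. Note in particular that the obvious source of such a bound, the fiber of the multiplication map $(g_j,h_j)_j\mapsto\sum_j g_j h_j$ over a $k$-rational $F$, is a $k$-variety whose degree grows with $\dim V$, so bounding the degree of a closed point of it by naive Bertini/Noether-normalization arguments gives nothing uniform. Establishing such uniformity is precisely the kind of statement that requires the full Ananyan--Hochster machinery or the forthcoming \cite{BDS2}; in effect, what you have reduced the theorem to is of comparable difficulty to the theorem itself. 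In short: your reduction is sound, but the proposal does not constitute a proof, and the remaining step is the entire substance of the result.
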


When $k$ has characteristic $0$ or characteristic strictly larger than
the entries of $\ul{d}$, this theorem is proved in \cite{LZ2} (over
certain fields) and the forthcoming paper \cite{BDLZ}. The general case follows from forthcoming work on the geometry of polynomial representations in
positive characteristic \cite{BDS2}. 

\subsection{The work of Ananyan--Hochster}

In the course of proving Stillman's conjecture \cite{AH}, Ananyan and Hochster proved that polynomials of high strength have numerous nice properties. See \cite{ESS3} for a general exposition on this theme. We will require two specific such results, which we now review.

Recall that elements $f_1, \ldots, f_r$ in a ring form a \defn{prime
sequence} if the ideal $I_i=(f_1, \ldots, f_i)$ is prime for every $i
\in \{1,\ldots,r\}$ and satisfies $I_i \subsetneq I_{i+1}$ for every
$i \in \{1,\ldots,r-1\}$. This implies that $f_1, \ldots, f_r$ is a
regular sequence.

\newC{c:AH}
\begin{theorem} \label{thm:AH}
Suppose $k$ is semi-perfect. If $\str(\ul{f})>\C{c:AH}(\ul{d})$, then $\ul{f}$ is a prime sequence.
\end{theorem}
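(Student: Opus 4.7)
The plan is to reduce the claim over $k$ to the corresponding statement over an algebraic closure $\bar k$, invoke the original Ananyan--Hochster primality theorem there, and descend along the faithfully flat ring map $\cP(V) \to \bar k \otimes_k \cP(V)$.

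For the reduction, let $c(\ul d)$ denote the threshold constant in the Ananyan--Hochster primality theorem over algebraically closed fields: any tuple of multi-degree $\ul d$ whose absolute strength exceeds $c(\ul d)$ is a prime sequence in $\bar k \otimes_k \cP(V)$. Define $\C{c:AH}(\ul d) := \C{c:astr}(\ul d, c(\ul d))$ using the constant from Theorem~\ref{thm:astr}. The contrapositive of that theorem---which requires $k$ semi-perfect---says that $\str(\ul f) > \C{c:AH}(\ul d)$ forces $\astr(\ul f) > c(\ul d)$. Consequently, the ideals $J_i := (f_1, \ldots, f_i) \subset \bar k \otimes_k \cP(V)$ form a strictly ascending chain of prime ideals.

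To descend to $k$, I would use that $\bar k \otimes_k \cP(V)$ is faithfully flat over $\cP(V)$, so $J_i \cap \cP(V) = I_i$ where $I_i := (f_1, \ldots, f_i)\cP(V)$. The induced injection $\cP(V)/I_i \hookrightarrow (\bar k \otimes_k \cP(V))/J_i$ exhibits $\cP(V)/I_i$ as a subring of a domain, proving that $I_i$ is prime. The strict inclusions $I_i \subsetneq I_{i+1}$ follow from those over $\bar k$: if $f_{i+1} \in I_i$ then $f_{i+1} \in J_i$, contradicting the prime sequence property already established over $\bar k$.

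The principal obstacle is ensuring the Ananyan--Hochster primality theorem is actually available over $\bar k$ in the required generality---over algebraically closed fields of \emph{arbitrary} characteristic. In characteristic zero or sufficiently large characteristic this is precisely the content of \cite{AH}; in small positive characteristic it either requires an extension of their argument, or it can be extracted from the same forthcoming geometric work on polynomial representations in positive characteristic used to establish Theorem~\ref{thm:astr}. Given this input, the two remaining ingredients---absolute vs.\ relative strength via Theorem~\ref{thm:astr}, and faithfully flat descent of primality---are formal.
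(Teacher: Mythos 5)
Your proposal is correct and follows essentially the same route as the paper: reduce to $\bar k$ via Theorem~\ref{thm:astr}, invoke Ananyan--Hochster's Theorem~A, and descend primality to $k$. You spell out the descent step (via faithful flatness) which the paper only asserts, and that is a fine elaboration; however, your closing worry about small positive characteristic is misplaced, since \cite[Theorem~A]{AH} is proved over an arbitrary algebraically closed field with no characteristic restriction, which is exactly why the paper cites it unconditionally.
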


\begin{proof}
This is \cite[Theorem~A]{AH} if $k$ is algebraically closed;
let the corresponding bound be $\widetilde{\C{c:AH}}(\ul{d})$. For the general
case, set
$\C{c:AH}(\ul{d})=\C{c:astr}(\ul{d},\widetilde{\C{c:AH}}(\ul{d}))$,
where $\C{c:astr}(\ul{d},s)$ is defined in
Theorem~\ref{thm:astr}. Then if $f$ has $\str(f) > \C{c:AH}(\ul{d})$, we have
$\astr(f) > \widetilde{\C{c:AH}}(\ul{d})$, so that $f_1,\ldots,f_r$ is a prime
sequence in $\ol{k}[x_1,\ldots,x_n]$. It is then also a prime sequence
in $k[x_1,\ldots,x_n]$.
\end{proof}

The next result compares the strength of $f$ to the codimension of the
singular locus of $Z(f)$. See \cite{KLP} for a quantitative result.

\begin{theorem} \label{thm:sing}
Suppose $k$ is semi-perfect. Let $f \in \cP_d(V)$ and let $I \subseteq
\cP(V)$ be the ideal generated by the first-order partial derivatives of $f$. Then
$\str(f)$ can be lower-bounded in terms of $d$ and the codimension of
$Z(I)$; and conversely, that codimension can be lower-bounded in terms of $d$
and $\str(f)$.
\end{theorem}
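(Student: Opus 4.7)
The theorem asserts a two-sided comparison between $\str(f)$ and $\codim Z(I)$ modulo functions depending only on $d$. The plan is to treat the two inequalities separately, since they have rather different flavors.

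\textbf{Direction 1: $\str(f) \geq \codim Z(I)/2$.} This direction does not require the semi-perfect hypothesis and admits a short direct proof. Suppose $\str(f) = s$, so we can write $f = \sum_{i=1}^s g_i h_i$ with $g_i, h_i \in \cP(V)$ homogeneous of positive degree less than $d$. Differentiating,
\[
\frac{\partial f}{\partial x_j} = \sum_{i=1}^s\left(\frac{\partial g_i}{\partial x_j}\,h_i + g_i\,\frac{\partial h_i}{\partial x_j}\right),
\]
so every partial derivative of $f$ lies in the ideal $J = (g_1,\ldots,g_s,h_1,\ldots,h_s)$. Hence $I \subseteq J$, and because $J$ is generated by $2s$ polynomials, $\codim Z(I) \leq \codim Z(J) \leq 2s$.

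\textbf{Direction 2: $\codim Z(I)$ is bounded below in terms of $\str(f)$ and $d$.} This is the substantive direction, packaging the Ananyan--Hochster collapsing theorems with Theorem~\ref{thm:AH}. The plan is to establish a \emph{transfer lemma}: there exists a function $\Phi(d,s')$ such that
\[
\str(\partial_1 f,\ldots,\partial_n f) \leq s' \;\Longrightarrow\; \str(f) \leq \Phi(d,s'),
\]
where the left-hand strength is of the tuple of all partials viewed as a tuple of degree-$(d-1)$ polynomials. Granting this, set $C(d,c) = \Phi\bigl(d,\,\C{c:AH}((d-1,\ldots,d-1))\bigr)$ with the multi-degree of length $c+1$. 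If $\str(f) > C(d,c)$, then the tuple of partials has strength exceeding $\C{c:AH}((d-1,\ldots,d-1))$. Any $c+1$ linearly independent partials form a sub-tuple whose strength is at least that of the full tuple (the minimum being taken over a smaller set of nonzero linear combinations), so Theorem~\ref{thm:AH} applies and these $c+1$ partials form a prime sequence in $\cP(V)$. This forces $\codim Z(I) \geq c+1$, as desired. (If fewer than $c+1$ partials are linearly independent, a separate easy argument shows $\str(f)$ is already small, so this case does not arise.)

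\textbf{Main obstacle.} The transfer lemma is the heart of the argument. In characteristic zero, or when $d$ is coprime to $\mathrm{char}(k)$, Euler's identity $d\cdot f = \sum_j x_j\,\partial_j f$ gives a useful handle: a short product decomposition of a suitable linear combination of partials can be lifted, via a coordinate change, to control a portion of $f$ itself, and one proceeds iteratively. In positive characteristic with $p \mid d$ this handle disappears, and the argument must instead exploit the structure theory of polynomial representations; here one invokes Theorem~\ref{thm:astr} (and \cite{BDS2}) to reduce the statement to the algebraically closed setting, where the full Ananyan--Hochster collapsing theorems are available. The quantitative version that makes all constants explicit is worked out in \cite{KLP}, and it is this quantitative input which ultimately supplies the bound for Direction~2.
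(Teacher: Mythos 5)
Your Direction~1 is correct and gives a clean, elementary proof of the bound $\codim Z(I) \le 2\,\str(f)$; the paper does not make this explicit, instead citing Ananyan--Hochster's Theorem~F over $\ol{k}$ and using Theorem~\ref{thm:astr} to transfer to semi-perfect $k$ for both directions at once.

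Direction~2, however, rests on a transfer lemma that is false. Take $k$ semi-perfect of characteristic $p>0$ and $f = x_1^{p+1}+\cdots+x_n^{p+1}$, so $d=p+1$. Then $\partial_j f = x_j^p$; these $n$ partials are linearly independent, yet the tuple $(\partial_1 f,\ldots,\partial_n f)$ has strength at most $1$, because the single entry $x_1^p = x_1\cdot x_1^{p-1}$ already factors into forms of degree less than $p = d-1$. On the other hand $\str(f)\ge n/2$ by Proposition~\ref{prop:diagstrength} (since $p\nmid p+1$), which is unbounded as $n\to\infty$. Hence no function $\Phi(d,s')$ can satisfy your implication $\str(\partial_1 f,\ldots,\partial_n f)\le s' \Rightarrow \str(f)\le\Phi(d,s')$, and the subsequent prime-sequence step never becomes available. (The theorem itself is not contradicted: here $Z(I)=\{0\}$ has codimension $n$, so both sides of the comparison are large — it is only your chosen intermediate invariant that collapses.) This is precisely the phenomenon recorded in Remark~\ref{rmk:D2}, and is the reason Proposition~\ref{prop:Df} is formulated for the single polynomial $Df\in\cP_d(V\oplus V)$ rather than the tuple of partials — and is deduced \emph{from} Theorem~\ref{thm:sing}, not the other way around. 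Since you yourself identify the transfer lemma as ``the heart of the argument,'' Direction~2 does not go through; the paper's two-line proof (cite \cite[Theorem~F]{AH} over $\ol{k}$, then apply Theorem~\ref{thm:astr}) remains the correct route.
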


\begin{proof}
If $k$ is algebraically closed, this is \cite[Theorem~F]{AH}. The semi-perfect case follows from this and Theorem~\ref{thm:astr}.
\end{proof}

\subsection{Regularization}

Let $\ul{f} \in \cP_{\ul{d}}(V)$ be given. Consider the following (vague) algorithm:
\begin{itemize}[leftmargin=4.5em]
\item[Step 1.] If $\ul{f}$ has high strength, then halt.
\item[Step 2.] Otherwise, some nontrivial linear combination
$u=c_a f_a + \ldots + c_b f_b$ of forms in $\ul{f}$ of the same degree
has small strength, and we may assume $c_b \neq 0$. Then write 
$u=g_1h_1+\cdots+g_sh_s$ with $s$ small, replace $f_b$ with  
$g_1, h_1, \ldots, g_s, h_s$, reorder to get the degrees in
non-increasing order, and return to Step~1.
\end{itemize}
The end result of this procedure is a high strength tuple $\ul{g}=(g_1, \ldots, g_s)$ such that each $f_i$ can be expressed as polynomial in $g_1, \ldots, g_s$.

This procedure is called \defn{regularization} in \cite{LZ1}. It is a key element in many arguments involving strength: one first proves results in the high strength regime, and then uses regularization to say something about arbitrary polynomials. For example, this is the basic structure of the proof of Stillman's conjecture in \cite{AH}.

The regularization procedure is important in this paper. The following is a precise statement of the version we use. What is particularly important here is that we obtain control on the length of the output tuple $\ul{g}$.

\begin{proposition} \label{prop:small-subalg}
Let $\ul{d}$ be a fixed multi-degree, and let $\Phi$ be any function
from the set of multi-degrees to $\bN$. Then there exists $s \in \bN$
with the following property. If $\ul{f} \in \cP_{\ul{d}}(V)$, then
there exist a multi-degree $\ul{e} \leq \ul{d}$ with $\len(\ul{e})
\leq s$ and a tuple $\ul{g} \in \cP_{\ul{e}}(V)$ such that
$\str(\ul{g}) > \Phi(\ul{e})$, and each $f_i$ belongs
to the algebra $k[\ul{g}]$ generated by the $g_j$'s.
\end{proposition}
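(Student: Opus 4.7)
The plan is well-founded induction on the multi-degree $\ul{d}$, using that multi-degrees are well-ordered under the lexicographic order (as noted in \S\ref{s:prelim}). Let $\ul{f} \in \cP_{\ul{d}}(V)$. If $\str(\ul{f}) > \Phi(\ul{d})$, take $\ul{e} = \ul{d}$ and $\ul{g} = \ul{f}$. Otherwise, one step of the regularization algorithm described just above the proposition applies: by definition of strength, some nontrivial combination $u = c_a f_a + \cdots + c_b f_b$ of forms of common degree $d_a = \cdots = d_b$ satisfies $\str(u) \leq \Phi(\ul{d})$, and we may arrange that $c_b \neq 0$ and that $b$ is the last index of that block. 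Write $u = g_1 h_1 + \cdots + g_t h_t$ with $t \leq \Phi(\ul{d})$ and $\deg g_i, \deg h_i \in \{1,\ldots,d_b-1\}$ (the case $d_b = 1$ is excluded because linear forms have infinite strength; if $u = 0$ we simply take $t = 0$). Form
\[
\ul{f}' = (f_1,\ldots,f_{b-1},f_{b+1},\ldots,f_r,g_1,h_1,\ldots,g_t,h_t)
\]
and sort its entries by non-increasing degree to get a multi-degree $\ul{d}'$. The first $b-1$ entries of $\ul{d}$ and $\ul{d}'$ coincide (they are $d_1,\ldots,d_{b-1}$, all $\geq d_b$), while the $b$-th entry of $\ul{d}'$ is strictly less than $d_b$ (both the surviving $d_{b+1},\ldots,d_r$ and the new $g_i,h_i$ have degrees $< d_b$). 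Hence $\ul{d}' < \ul{d}$ lexicographically.

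Apply the inductive hypothesis to $\ul{f}'$ to obtain $\ul{e} \leq \ul{d}'$ and $\ul{g} \in \cP_{\ul{e}}(V)$ with $\str(\ul{g}) > \Phi(\ul{e})$ such that every entry of $\ul{f}'$ lies in $k[\ul{g}]$. Then $f_i \in k[\ul{g}]$ for every $i \neq b$; the combination $u = \sum_{i=1}^t g_i h_i$ lies in $k[\ul{g}]$; and finally
\[
f_b = c_b^{-1}\Bigl(u - \sum_{j=a}^{b-1} c_j f_j\Bigr) \in k[\ul{g}].
\]
Since $\ul{e} \leq \ul{d}' < \ul{d}$, this completes the induction step.

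For the uniform length bound: with $\ul{d}$ fixed, the possible $\ul{d}'$'s arising in the induction step have length at most $\len(\ul{d}) - 1 + 2\Phi(\ul{d})$ and entries bounded by $d_1$, so they form a \emph{finite} set $S(\ul{d})$ of multi-degrees, all strictly below $\ul{d}$. Define $s_\Phi(\ul{d})$ by the well-founded recursion
\[
s_\Phi(\ul{d}) = \max\bigl(\len(\ul{d}),\ \max_{\ul{d}' \in S(\ul{d})} s_\Phi(\ul{d}')\bigr);
\]
this yields the finite bound $s = s_\Phi(\ul{d})$ claimed in the proposition.

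The main point requiring care is conceptual: each regularization step \emph{removes} one polynomial but \emph{adds} up to $2\Phi(\ul{d})$ new ones, so the length of the tuple may grow and one cannot induct on length. Termination and the uniform length bound both come instead from inducting on $\ul{d}$ in lexicographic order and exploiting the finiteness of the set $S(\ul{d})$ of possible successors at each stage.
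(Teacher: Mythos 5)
Your proof is correct, and it is a self-contained version of the regularization argument that the paper only cites (ESS2 Prop.~8.1, Schmidt \S2, AH Thm.~B) and informally sketches in the ``(vague) algorithm'' just above the proposition. The two essential points---that replacing $f_b$ by the lower-degree factors $g_i, h_i$ strictly decreases the lexicographic multi-degree, and that finiteness of the set $S(\ul{d})$ of possible successor multi-degrees yields a uniform length bound by a second well-founded recursion---are both present and correctly argued.
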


\begin{proof}
The same argument used in \cite[Proposition~8.1]{ESS2} applies. Also see \cite[\S2] {Schmidt} and \cite[Theorem~B]{AH}.
\end{proof}

\begin{remark}
The regularization procedure is wildly inefficient: the length of the output tuple $\ul{g}$ will be worse than tower exponential in the length of the input tuple $\ul{f}$. See \cite{Wooley3} for details. This implies that many of the constants we find in this paper are similarly bad. We note that \cite{LZ1} gives an improvement of the regularization procedure in a certain context that has only polynomial growth.
\end{remark}

\section{Reduction to Proposition~\ref{prop:good}} \label{s:red}

Throughout this section $k$ denotes a fixed infinite Brauer field, and $\ul{d}=(d_1, \ldots, d_r)$ denotes a multi-degree of length $r$.

\subsection{Set-up}

Consider the following statement:
\begin{description}[align=right,labelwidth=1.4cm,leftmargin=!]
\item[$\Sigma(\ul{d})$] There exists a quantity
$\C{c:mainthm}(\ul{d})$ such that if $\ul{f} \in \cP_{\ul{d}}(V)$
satisfies $\str(\ul{f}) > \C{c:mainthm}(\ul{d})$, then the
$k$-points of $Z(\ul{f})$ are Zariski dense.
\end{description}
In the above statement, $V$ is allowed to be any finite dimensional $k$-vector space. Theorem~\ref{mainthm} states that $\Sigma(\ul{d})$ holds for all multi-degrees $\ul{d}$. We will prove $\Sigma(\ul{d})$ by induction on $\ul{d}$. To this end, it will be convenient to introduce the following auxiliary statement:
\begin{description}[align=right,labelwidth=1.4cm,leftmargin=!]
\item[$\Sigma^*(\ul{d})$] the statement $\Sigma(\ul{e})$ holds for all multi-degrees $\ul{e}<\ul{d}$.
\end{description}
To prove the theorem, it suffices to show that $\Sigma^*(\ul{d})$ implies $\Sigma(\ul{d})$.

\subsection{Existence of good subspaces}

Let $f \in \cP_d(V)$. We say that subspaces $E_1, \ldots, E_n$ of $V$ are \defn{$f$-orthogonal} if
\begin{displaymath}
f(v_1+\ldots+v_n)=f(v_1)+\ldots+f(v_n)
\end{displaymath}
holds for all $v_i \in E_i$. If $\dim{V}=3$, then we say that $f$ is \defn{good} if it has the form
\begin{displaymath}
xy^{d-1}+ay^d+bz^d,
\end{displaymath}
where $a\in k$ and $b \in k^\times$, and $x$, $y$, and $z$ are coordinates
on $V$.

The following is the key intermediate result we need to prove our
main theorems. 

\newC{c:good}
\begin{proposition} \label{prop:good}
Assume $\Sigma^*(\ul{d})$. Let $\ul{f} \in \cP_{\ul{d}}(V)$, let $1
\le i \le r$, and let $F$ be a subspace of $V$. If $\str(\ul{f})>\C{c:good}(\ul{d}, \dim{F})$, then there is a three dimensional subspace $E$ of $V$ such that:
\begin{enumerate}
\item $E+F=E\oplus F$,
\item $E$ and $F$ are $f_j$-orthogonal for $i \le j \le r$,
\item $f_j$ vanishes on $E$ for $i<j \le r$, and 
\item the restriction of $f_i$ to $E$ is good.
\end{enumerate}
\end{proposition}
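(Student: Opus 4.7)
The plan is to argue by induction on $\ul{d}$, leveraging $\Sigma^*(\ul{d})$. We construct the desired $E$ in two stages: first we carve out a large-dimensional subspace $V'\subseteq V$ on which $f_i$ restricts to a diagonal form $\sum a_\ell x_\ell^{d_i}$ with each $a_\ell\ne 0$, on which each $f_j$ with $j>i$ vanishes identically, and which is $f_j$-orthogonal to $F$ for every $j\ge i$; second we pick a $3$-dimensional $E\subseteq V'$ on which $f_i$ takes the good shape.

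For the first stage, we build a linearly independent sequence $v_1,\dots,v_n\in V$, with $n$ taken as large as needed in terms of $\ul{d}$ and $\dim F$, such that for every $j\in\{i,\dots,r\}$: $v_\ell$ is $f_j$-orthogonal both to $F$ and to $\operatorname{span}(v_1,\dots,v_{\ell-1})$; $f_j(v_\ell)=0$ when $j>i$; and $f_i(v_\ell)\ne 0$. Expanding $f_j(v+w)-f_j(v)-f_j(w)=\sum_{k=1}^{d_j-1}T_{j,k}(v,w)$ and polarizing in $w$, the orthogonality conditions translate into the vanishing of polynomials in $v_\ell$ of degrees $d_j-k<d_j\le d_i$. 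Combined with the equations $f_j(v_\ell)=0$ of degree $d_j\le d_i$ for $j>i$, this is a finite polynomial system whose sorted multi-degree $\ul{e}$ is strictly less than $\ul{d}$: the point is that, no matter how degrees happen to coincide, $\ul{e}$ has strictly fewer copies of $d_i$ at its top than $\ul{d}$ does, because we deliberately omit the constraint $f_i(v_\ell)=0$. Standard strength-inheritance facts (partial derivatives and polarizations of high-strength forms remain high-strength, cf.\ Theorem~\ref{thm:sing}) guarantee that the strength of this system is bounded below in terms of $\str(\ul{f})$, $\dim F$ and $\ell$, so for $\str(\ul{f})$ sufficiently large we may invoke $\Sigma^*(\ul{d})$ to produce a Zariski-dense set of $k$-solutions. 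The Zariski-open conditions of linear independence and $f_i(v_\ell)\ne 0$ then pin down $v_\ell$.

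For the second stage, set $V'=\operatorname{span}(v_1,\dots,v_n)$ and $d=d_i$, so that $f_i|_{V'}=\sum_{\ell=1}^n a_\ell x_\ell^{d}$ with $a_\ell\ne 0$, and $f_j|_{V'}=0$ for $j>i$. Take $u_3=v_n$, so $b:=f_i(u_3)=a_n\ne 0$ and $u_3$ is $f_i$-orthogonal to $V'':=\operatorname{span}(v_1,\dots,v_{n-1})$. Applying the Brauer property to the diagonal form $\sum_{\ell<n}a_\ell p_\ell^{d}$, provided $n-1>N_k(d)$ we pick $u_1=\sum p_\ell v_\ell\in V''$ with $f_i(u_1)=0$. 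We then seek $u_2=\sum q_\ell v_\ell\in V''$ for which the only mixed $(u_1,u_2)$-monomial surviving in $f_i(xu_1+yu_2)$ is $xy^{d-1}$; expanding, this reads
\[
\sum_{\ell<n}a_\ell\, p_\ell^{d-k}q_\ell^{k}=0\quad(k=2,\dots,d-1),\qquad \sum_{\ell<n}a_\ell\, p_\ell q_\ell^{d-1}\ne 0.
\]
The equalities form a polynomial system in $(q_\ell)$ of sorted multi-degree $(d-1,\dots,2)<(d)\le\ul{d}$, with strength controlled in terms of $\str(\ul{f})$ once $n$ is large enough, so $\Sigma^*(\ul{d})$ again produces Zariski-dense $k$-solutions, and a generic such $u_2$ satisfies the open non-vanishing condition. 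Rescaling $u_1$ to normalize the $xy^{d-1}$ coefficient to $1$ then yields $f_i|_E=xy^{d-1}+ay^d+bz^d$ with $E:=\operatorname{span}(u_1,u_2,u_3)$ three-dimensional and automatically independent from $F$ since $E\subseteq V'$.

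The principal obstacle is strength control: ensuring that at each stage the auxiliary polynomial systems (the polarization-derived orthogonality equations and the system determining $u_2$) inherit enough strength from $\str(\ul{f})$ that $\Sigma^*(\ul{d})$ is applicable with a threshold depending only on $\ul d$ and $\dim F$. This is where Ananyan--Hochster technology (Theorems~\ref{thm:AH} and~\ref{thm:sing}) is essential, and where the bulk of the quantitative work lies. A secondary subtlety is that in positive characteristic diagonal forms may fail to be generic enough (for instance, Proposition~\ref{prop:diagstrength} requires $p\nmid d$), so the diagonal reduction in Stage~1 must be replaced by a more robust substitute; this is presumably why \S\ref{s:normal0} and \S\ref{s:normalp} treat the two characteristic regimes separately.
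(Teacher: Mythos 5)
Your plan follows essentially the same two-stage route as the paper's characteristic-$0$ proof (\S\ref{s:normal0}): first build a frame $v_1,\ldots,v_n$ by iterated application of an orthogonal-line construction (the paper's Proposition~\ref{prop:orth-line}) so that $f_i$ restricts to a diagonal form of high rank and all $f_j$ with $j>i$ vanish on the span; then carve a good three-dimensional subspace from that diagonal form (the paper's Proposition~\ref{prop:diag}). You also correctly identify the induction mechanism: omitting the constraint $f_i(v_\ell)=0$ forces the auxiliary multi-degree strictly below $\ul{d}$.

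There is, however, a genuine gap in Stage~2. You choose $u_1=\sum p_\ell v_\ell$ with $f_i(u_1)=0$ by a single appeal to the Brauer property. This only guarantees that $u_1$ is nonzero --- hence that at least two $p_\ell$ are nonzero --- but the system you then write down, $\sum_\ell a_\ell p_\ell^{d-k}q_\ell^k=0$, consists of diagonal forms in $q$ whose rank is exactly the number of nonzero $p_\ell$. To invoke $\Sigma^*$ (via Proposition~\ref{prop:lowdeg}) you need this rank, and hence the strength, to be large, but a random Brauer solution can have as few as two nonzero coordinates. The paper closes this gap with the block decomposition in the proof of Proposition~\ref{prop:diag}: partition the variables into many blocks of size $>N_k(d)$, solve in each block separately, and concatenate, producing a zero $v$ of $f_i$ with at least $2m$ nonzero coordinates. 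Without this device your second-stage strength claim is unsupported. (A minor slip on top of this: the degree range for the vanishing conditions should be $k=1,\ldots,d-2$, i.e.\ multi-degree $(d-2,\ldots,1)$, not $k=2,\ldots,d-1$.)

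You acknowledge that positive characteristic needs a different argument, but it is worth stressing that the obstruction is not merely a failure of genericity: if $p\mid d$, then every two-variable specialization of a diagonal form $\sum a_\ell x_\ell^d$ is again diagonal, so the good shape $xy^{d-1}+ay^d+bz^d$ simply cannot arise by restricting a diagonal form, and Stage~2 collapses outright. The paper's \S\ref{s:normalp} therefore replaces the diagonal-form argument by building orthogonal \emph{planes} rather than lines (Propositions~\ref{prop:orth-plane} and~\ref{prop:plane}), controlling the $xy^{d-1}$ coefficient directly through the bilinear piece $Df$ and the strength bound of Corollary~\ref{cor:D-strength}. That is a substantial independent half of the proof that your sketch does not address.
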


The proof of Proposition~\ref{prop:good} is deferred to \S \ref{s:normal0}
and \S \ref{s:normalp}; in the remainder of this section we use it to prove
our main theorems. 

\subsection{Non-vanishing on small subspaces}

Suppose $g$ is a polynomial that does not vanish identically on
$Z(\ul{f}) \subset V$; we will abbreviate this requirement to ``$g$
is non-vanishing on $Z(\ul{f})$''. We will need to know that one can
pass to a small-dimensional linear subspace of $V$ and maintain the
non-vanishing of $g$ on the zero locus of $\ul{f}$. This is established
in the following proposition.

\begin{proposition} \label{prop:non-vanishing}
Let $Z$ be a closed subvariety of $V$ of codimension $\le r$, and let $g \in \cP(V)$ be a polynomial that does not vanish identically on $Z$. Then there is a vector subspace $F$ of $V$ of dimension $\le r+1$ such that $g$ does not vanish identically on $F \cap Z$.
\end{proposition}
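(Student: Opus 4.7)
The plan is to prove the proposition by induction on $\dim V$, reducing to a lower-dimensional ambient space via a $k$-rational hyperplane section at each step.

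First, I would reduce to the case that $Z$ is irreducible by replacing $Z$ with a $k$-irreducible component on which $g$ does not vanish identically; such a component exists because $g$ does not vanish identically on $Z$. In the intended application where $Z = Z(\ul{f})$ for a tuple $\ul{f}$ of length $r$, every irreducible component of $Z$ has codimension at most $r$ in $V$ by Krull's principal ideal theorem, so this replacement preserves the hypothesis $\codim_V Z \le r$.

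Set $n = \dim V$ and $c = \codim_V Z \le r$. The induction is on $n$: the base case $n \le r + 1$ is immediate, as $F = V$ works. For the inductive step, assume $n \ge r + 2$, and aim to find a $k$-rational hyperplane $H \subset V$ (a codimension-one linear subspace through the origin) such that (i) $Z \not\subset H$ and (ii) $g$ does not vanish identically on $Z \cap H$. Granted such an $H$, Krull's Hauptidealsatz ensures $Z \cap H$ is pure of codimension $c$ in $H$, and applying the inductive hypothesis to $(H, Z \cap H, g|_H)$ with the same bound $r$ yields a subspace $F \subset H$ of dimension $\le r + 1$ on which $g|_H$ does not vanish identically on $F \cap (Z \cap H) = F \cap Z$, as desired.

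To produce such an $H$, both conditions cut out $k$-defined Zariski-open subsets of the projective space $\bP(V^*)$ parametrizing hyperplanes through the origin. For (i), the complement consists of hyperplanes containing $Z$, which is a proper closed subset since $\dim Z = n - c \ge 2$. For (ii), I would use the incidence variety $I = \{(H, p) \in \bP(V^*) \times U : p \in H\}$ with $U := Z \setminus Z(g)$ a nonempty open of dimension $n - c$; projection to $U$ has generic fiber $\bP^{n-2}$ (the hyperplanes through a fixed nonzero point), so $\dim I = (n - c) + (n - 2)$, which exceeds $\dim \bP(V^*) = n - 1$ by $n - c - 1 \ge 1$. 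Hence the other projection $I \to \bP(V^*)$ is dominant, and its image---constructible by Chevalley's theorem and defined over $k$---contains a nonempty Zariski-open subset. Since $k$ is infinite, the nonempty intersection of the two $k$-open subsets of $\bP(V^*)$ has $k$-rational points, supplying the desired $H$. The main obstacle is this dimension-count argument for the existence of $H$ over the infinite field $k$; after that the induction runs routinely.
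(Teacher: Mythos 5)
Your approach is genuinely different from the paper's. The paper reduces to $Z$ irreducible and then applies Noether normalization: it finds a linear subspace $W \subset V$ of dimension $\le r$ such that the projection $\pi\colon V \to V/W$ restricted to $Z$ is finite, and takes $F = \pi^{-1}(\operatorname{span}(x))$ for a generic $k$-point $x$ of $V/W$ avoiding the closed set $\pi(Z \cap Z(g))$; this $F$ has dimension $\le r+1$ and meets $Z$ at a point where $g$ is nonzero. Your proof instead inducts on $\dim V$, slicing by one generic $k$-rational hyperplane at a time. Both arguments use the infinitude of $k$ to locate a suitably generic $k$-rational linear object; yours avoids Noether normalization, at the cost of a more delicate genericity argument at each step.

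There is, however, a gap in your dominance claim. From $\dim I > \dim \bP(V^*)$ alone one cannot conclude that the projection $\phi\colon I \to \bP(V^*)$ is dominant: one also needs control of the fiber dimensions. The fiber of $\phi$ over $H$ is $H \cap U$, which has dimension $n-c$ when $Z \subset H$, and the crude bound $\dim\overline{\phi(I)} \ge \dim I - (n-c) = n-2$ falls one short of dominance. To repair it, note that the hyperplanes containing $Z$ form a linear subspace $L \subset \bP(V^*)$ of dimension at most $c-1$ (the annihilator of $\operatorname{span}(Z)$, which has dimension $\ge \dim Z = n-c$), so $\dim \phi^{-1}(L) \le (c-1)+(n-c) = n-1 < \dim I$; over the complement of $L$ the fibers have dimension $\le n-c-1$, so the generic fiber of $\phi$ has dimension $\le n-c-1$, whence $\dim\overline{\phi(I)} \ge \dim I - (n-c-1) = n-1$ and $\phi$ is dominant. (Alternatively, one can drop the incidence variety and observe directly that for each nonzero $p \in U$ the set of good $H$ contains the hyperplane $\{H : p \in H\}$ of $\bP(V^*)$; since $\dim U \ge 2$, these hyperplanes are infinite in number, and no proper closed subset of $\bP(V^*)$ contains infinitely many hyperplanes.) With either fix, the induction runs as you describe.
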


\begin{proof}
Replacing $Z$ with an irreducible component on which $g$ is non-vanishing, we assume that $Z$ is irreducible. By Noether normalization, we can find a subspace $W$ of $V$ of dimension $\le r$ such that the projection map $\pi \colon V \to V/W$ is finite when restricted to $Z$. Here we are using the fact that $k$ is infinite to achieve normalization with a linear map.

Let $Z' \subset Z$ be the zero locus of $g$ on $Z$. Since $Z$ is irreducible, it follows that $Z'$ has codimension one in $Z$. Since $\pi$ is finite, it follows that $\pi(Z')$ is a closed subset of $V/W$ of codimension~1; let $U$ be the complement of $\pi(Z')$, which is a dense open subset. Let $x$ be a $k$-point in $U$; such a point exists since $k$ is infinite. Then $\pi^{-1}(x)$ contains a $\ol{k}$-point $y$ of $Z$ at which $g$ does not vanish. We can therefore take $F=\pi^{-1}(\operatorname{span}(x))$, as this is subspace of $V$ of dimension $\le r+1$ containing $y$ over $\ol{k}$. 
\end{proof}

\subsection{A normal form}

We now show that a tuple of polynomials can be put into a rather simple form on an appropriate linear subspace.

\newC{c:normal}
\begin{proposition} \label{prop:normal}
Suppose $\Sigma^*(\ul{d})$ holds. Let $\ul{f} \in \cP_{\ul{d}}(V)$ and
let $g \in \cP(V)$ be non-vanishing on $Z(\ul{f})$. If $\str(\ul{f}) >
\C{c:normal}(\ul{d})$, then there exists a subspace $W$ of $V$ with
coordinates $\{x_i,y_i,z_i\}_{1 \le i \le r} \cup \{w_j\}_{1 \le j \le
m}$ such that
\begin{enumerate}
\item for every $i\in\{1,\ldots,r\}$ the restriction of $f_i$ to $W$ has the form
\begin{displaymath}
x_iy_i^{d_i-1}+a_i y_i^{d_i}+b_i z_i^{d_i}+h_i(x_{i+1},y_{i+1},z_{i+1}, \ldots, x_r, y_r, z_r, w_1, \ldots, w_m)
\end{displaymath}
for some polynomial $h_i$ and scalars $a_i \in k, b_i\in k^\times$; and
\item $g$ does not vanish identically on $Z(\ul{f}) \cap W$.
\end{enumerate}
\end{proposition}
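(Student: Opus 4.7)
The plan is to build $W$ as an iterated direct sum $W = F_0 \oplus E_r \oplus E_{r-1} \oplus \cdots \oplus E_1$, where $F_0$ is a small subspace chosen so that $g$ remains non-vanishing on $Z(\ul{f}) \cap F_0$, and each three-dimensional $E_i$ is produced by a single application of Proposition~\ref{prop:good}. The crucial point is that the $E_i$ must be constructed in \emph{decreasing} order of $i$: at the $i$-th step the $f_j$-orthogonality given by Proposition~\ref{prop:good} holds only for $j \ge i$, and the vanishing $f_j|_{E_i}=0$ only for $j > i$, so reversing the order of construction is exactly what prevents $x_\ell,y_\ell,z_\ell$ with $\ell < i$ from appearing in $f_i|_W$.

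First I would build $F_0$. Since every Brauer field is semi-perfect (Proposition~\ref{prop:semiperfect}), Theorem~\ref{thm:AH} ensures that once $\str(\ul{f})$ exceeds $\C{c:AH}(\ul{d})$ the tuple $\ul{f}$ is a prime (hence regular) sequence, so $Z(\ul{f})$ has codimension exactly $r$ in $V$. Proposition~\ref{prop:non-vanishing} then yields a subspace $F_0 \subseteq V$ of dimension $m \le r+1$ on which $g$ is not identically zero on $F_0 \cap Z(\ul{f})$; I fix coordinates $w_1, \ldots, w_m$ on $F_0$.

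Next I would run a descending induction on $i$. Set $V_r := F_0$ and, for $i$ running from $r$ down to $1$, apply Proposition~\ref{prop:good} to $\ul{f}$ with index $i$ and subspace $V_i$. Since $\dim V_i = m + 3(r-i) \le 4r-2$, the single bound $\str(\ul{f}) > \C{c:good}(\ul{d}, 4r-2)$ is uniformly sufficient across all $r$ steps. This produces a three-dimensional $E_i$ with coordinates $x_i, y_i, z_i$ such that $E_i \cap V_i = 0$, $E_i$ is $f_j$-orthogonal to $V_i$ for every $j \ge i$, $f_j|_{E_i} = 0$ for every $j > i$, and $f_i|_{E_i}$ is good. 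Set $V_{i-1} := E_i \oplus V_i$ and take $W := V_0 = F_0 \oplus E_1 \oplus \cdots \oplus E_r$.

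To verify the normal form, I would fix $i$ and combine two features of the construction. For $\ell < i$ the vanishing from step $\ell$, applied to $j = i$, gives $f_i|_{E_\ell} = 0$; and for every $\ell \le i$ the subspace $E_\ell$ is $f_i$-orthogonal to $V_\ell \supseteq F_0 \oplus E_{\ell+1} \oplus \cdots \oplus E_r$. Writing a point of $W$ as $u + v_1 + \cdots + v_r$ with $u \in F_0$ and $v_\ell \in E_\ell$, and peeling off $v_1, v_2, \ldots, v_i$ in turn using these two facts, collapses $f_i|_W$ to $f_i|_{E_i}(v_i) + f_i|_{F_0 \oplus E_{i+1} \oplus \cdots \oplus E_r}(u + v_{i+1} + \cdots + v_r)$. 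Substituting the good form of $f_i|_{E_i}$ gives condition~(i), and condition~(ii) is immediate from $F_0 \subseteq W$. Setting $\C{c:normal}(\ul{d}) := \max\{\C{c:AH}(\ul{d}), \C{c:good}(\ul{d}, 4r-2)\}$ closes the argument.

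The main obstacle is really the conceptual choice of direction: a naive increasing construction fails because when an $E_\ell$ with $\ell > i$ is later added, one has no control over $f_i|_{E_\ell}$ or over the $f_i$-interaction between $E_\ell$ and the previously chosen pieces, both of which would spoil the normal form for $f_i$. Processing in decreasing order makes the inductive hypothesis strong enough at every index simultaneously, after which everything reduces to careful bookkeeping on orthogonalities and vanishings.
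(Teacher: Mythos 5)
Your proposal matches the paper's proof essentially step for step: choose a small subspace $F$ via Proposition~\ref{prop:non-vanishing}, then iteratively apply Proposition~\ref{prop:good} in decreasing order of index ($E_r$ first, then $E_{r-1}$, \dots, then $E_1$), track the dimension growth to arrive at the same constant $\C{c:good}(\ul{d},4r-2)$, and verify the normal form by repeated use of orthogonality and vanishing. The one minor difference is your invocation of Theorem~\ref{thm:AH} to conclude $Z(\ul{f})$ has codimension $\le r$; this is unnecessary (a variety cut out by $r$ equations automatically has codimension $\le r$ by Krull's height theorem), so the $\C{c:AH}(\ul{d})$ term in your final constant can be dropped, as the paper does.
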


\begin{proof}
Let $F$ be a subspace of $V$ of dimension $\le r+1$ such that $Z(\ul{f}) \cap F$ is non-empty, and $g$ does not vanish identically on $Z(\ul{f}) \cap F$. This exists by Proposition~\ref{prop:non-vanishing}. We construct three-dimensional subspaces $E_1, \ldots, E_r$ of $V$ with the following properties:
\begin{enumerate}[(i)]
\item $E_1 + \ldots + E_r + F = E_1 \oplus\cdots\oplus E_r\oplus F$.
\item The spaces $E_i, \ldots, E_r, F$ are $f_j$-orthogonal for $i \le j \le r$.
\item The forms $f_{i+1}, \ldots, f_r$ vanish identically on $E_i$.
\item The restriction of $f_i$ to $E_i$ is good, say $x_i y_i^{d_i-1}
+ a_i y_i^{d_i} + b_i z_i^{d_i}$, where $x_i, y_i, z_i$ are suitable
coordinates on $E_i$, and $a_i \in k, b_i\in k^\times$.
\end{enumerate}
Granted this, we can take $W=E_1 \oplus \cdots \oplus E_r \oplus F$ and take $w_1, \ldots, w_m$ to be coordinates on~$F$.

To construct the $E_i$'s we iteratively apply
Proposition~\ref{prop:good}. More precisely, we first construct $E_r$ ($f_r$-orthogonal to $F$), then we construct $E_{r-1}$ ($f_j$-orthogonal to $E_r \oplus F$ for $r-1\leq j\leq r$), and so on, until we finally construct $E_1$ ($f_j$-orthogonal to $E_2 \oplus \cdots \oplus E_r \oplus F$ for $1\leq j\leq r$). 

To construct $E_r$, we must have $\str(\ul{f})>\C{c:good}(\ul{d}, r+1)$, since $\dim(F) \le r+1$. To construct $E_{r-1}$, we must have $\str(\ul{f})>\C{c:good}(\ul{d}, r+4)$, since $\dim(E_r \oplus F) \le r+4$. The pattern continues in this way. We thus see that to carry out the entirely construction, we must have $\str(\ul{f})>\C{c:good}(\ul{d}, 4r-2)$. We can therefore take $\C{c:normal}(\ul{d})=\C{c:good}(\ul{d}, 4r-2)$. (Here we have assumed that $\C{c:good}(\ul{d},n)$ is increasing in $n$, which can always be arranged.)
\end{proof}

We observe some consequences of the normal form.

\begin{proposition} \label{prop:normal2}
Suppose $\Sigma^*(\ul{d})$ holds. Let $\ul{f} \in \cP_{\ul{d}}(V)$ and $g \in \cP(V)$ and $W$ be as in the previous proposition.
\begin{enumerate}
\item The restriction of $\ul{f}$ to $W$ is a prime sequence. In particular, $Z(\ul{f}) \cap W$ is irreducible.
\item The open subset $U$ of $Z(\ul{f}) \cap W$ defined by $y_i \ne 0$ for all $1 \le i \le r$ is non-empty, and isomorphic to an open subvariety of affine space of dimension $2r+m$.
\item The $k$-points of $Z(\ul{f}) \cap W$ are dense.
\end{enumerate}
\end{proposition}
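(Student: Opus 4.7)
The plan is to exploit the triangular structure of the normal form: after the invertible linear substitution $\tilde x_i := x_i + a_i y_i$, each restriction $f_i|_W$ takes the shape $y_i^{d_i-1}\tilde x_i + b_i z_i^{d_i} + \tilde h_i$, where $\tilde h_i$ involves only $\tilde x_\ell, y_\ell, z_\ell$ with $\ell>i$ together with the $w_j$'s. Thus $f_i|_W$ is linear in $\tilde x_i$, with leading coefficient the pure power $y_i^{d_i-1}$, and the remaining terms involve neither $y_i$ nor $\tilde x_1,\ldots,\tilde x_{i-1}$.

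I would first dispatch part~(2) by descending triangular elimination: starting from $f_r=0$ and decreasing $i$, one recursively solves each equation $f_i=0$ for $\tilde x_i$ in the localization $\cP(W)[y_1^{-1},\ldots,y_r^{-1}]$. This produces a $k$-algebra isomorphism between $\cO(U)$ and the Laurent polynomial ring $k[y_1^{\pm},z_1,\ldots,y_r^{\pm},z_r,w_1,\ldots,w_m]$, realizing $U$ as the principal open subscheme $D(y_1\cdots y_r)\subset\bA^{2r+m}$; in particular $U$ is nonempty.

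For part~(1) I would show, by reverse induction on $i$ with $j$ fixed, that every ideal $(f_i|_W,\ldots,f_j|_W)$ with $1\le i\le j\le r$ is prime in $\cP(W)$; specializing to $i=1$ then gives all the initial ideals of the prime sequence $\ul{f}|_W$. The base case $i=j$ follows because $f_j|_W$ is linear in $\tilde x_j$ over the UFD $\cP(W)$, with leading coefficient a pure $y_j$-power and $y_j$-free nonzero constant term, hence irreducible. In the inductive step, since $f_{i+1}|_W,\ldots,f_j|_W$ involve no variable of index $\le i$, the quotient decomposes as a polynomial ring $A := \cP(W)/(f_{i+1},\ldots,f_j) = A''[\tilde x_1,y_1,z_1,\ldots,\tilde x_i,y_i,z_i]$ over $A'' := k[\tilde x_{i+1},y_{i+1},z_{i+1},\ldots,w_m]/(f_{i+1},\ldots,f_j)$, which is a domain by the inductive hypothesis. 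Inside the subring $A''[y_i,z_i,\tilde x_i]\subset A$, the element $f_i|_W$ has the form $y_i^{d_i-1}\tilde x_i + c$ with $c = b_iz_i^{d_i}+\tilde h_i \in A''[z_i]$ nonzero and $y_i$-free. The key algebraic lemma I would prove is: for any domain $D$, any integer $e\ge 0$, and any nonzero $c\in D[z]$, the element $y^e\tilde x+c$ generates a prime ideal in $D[y,z,\tilde x]$. I would establish this by expanding a hypothetical relation $y\cdot a = q\cdot(y^e\tilde x+c)$ as a polynomial first in $\tilde x$ and then in $y$, matching coefficients to force each coefficient of $q$ to be divisible by $y$; this shows that $y$ is a non-zero-divisor modulo $y^e\tilde x+c$, and localizing at $y$ then identifies the quotient with a subring of the domain $D[y^{\pm},z]$. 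Applying the lemma with $D=A''$ and polynomial-extending scalars back to $A$ completes the induction.

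Part~(3) is then immediate: $U$ is a nonempty open subvariety of affine space, so $U(k)$ is Zariski dense in $U$ because $k$ is infinite; by part~(1), $U$ is a dense open subset of the irreducible variety $Z(\ul{f})\cap W$, so the $k$-points of $U$ are Zariski dense in $Z(\ul{f})\cap W$ as well. The main obstacle is the non-zero-divisor lemma in the inductive step of part~(1): the intuition ``$y$ only divides pure powers of $y$'' is clean, but the verification has to take place in a polynomial ring over a general domain $A''$, which in our setting need not be a UFD, so the coefficient-matching argument has to be carried out by hand rather than via unique factorization.
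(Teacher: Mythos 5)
Your proof is correct. Parts (2) and (3) are handled essentially as in the paper: the projection forgetting the $x$-coordinates (equivalently, your triangular elimination of the $\tilde x_i$'s) identifies $U$ with the principal open set $D(y_1\cdots y_r)\subset \bA^{2r+m}$, and density of $k$-points then follows from irreducibility together with infinitude of $k$.

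For part (1), however, you take a genuinely different route. The paper degenerates: it introduces the weight vector $\omega=(1,1,1,2,2,2,\ldots,r,r,r,r{+}1,\ldots,r{+}1)$, observes that the $\omega$-initial ideal of $I_i=(f_1|_W,\ldots,f_i|_W)$ is generated by the forms $x_jy_j^{d_j-1}+a_jy_j^{d_j}+b_jz_j^{d_j}$ ($1\le j\le i$), which live in pairwise disjoint sets of variables and are each irreducible because $b_j\ne 0$, and then invokes the standard fact that an ideal whose initial ideal is prime is itself prime. Your argument instead works directly: after the linear substitution $\tilde x_j=x_j+a_jy_j$ you peel off the forms one at a time by descending induction, and the crux is the self-contained lemma that $y^e\tilde x+c$ is prime in $D[y,z,\tilde x]$ for any domain $D$ and nonzero $c\in D[z]$, proved by showing $y$ is a non-zero-divisor modulo $y^e\tilde x+c$ and then localizing at $y$. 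Both arguments are correct; the paper's is a one-liner modulo Gröbner-degeneration technology, while yours is longer but entirely elementary and does not presuppose any theory of initial ideals. One small point worth making explicit in your write-up: a prime sequence also requires $I_i\subsetneq I_{i+1}$; this follows from your part (2) by a dimension count, since $Z(\ul f|_W)$ has codimension exactly $r$ in $W$ and Krull's height theorem then forces each $f_i|_W$ to strictly increase the height.
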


\begin{proof}
For (a), let $I_i$ be the ideal generated by $f_1|_W,\ldots,f_i|_W$.
Set 
\[ \omega=(1,1,1,2,2,2,\ldots,r,r,r,(r+1),\ldots,(r+1)) \in \bZ^{3r+m}. \] 
Then the initial ideal of $I_i$ with respect to the weight vector
$\omega$ is the ideal generated by $x_jy_j^{d_j-1}+a_j y_j^{d_j} + b_j
z_j^{d_j}$ with $j=1,\ldots,i$. Since the latter ideal is 
prime (here we use that $b_j \neq 0$), so is the former.

For (b), we note that the projection forgetting $x_1,\ldots,x_r$
induces an isomorphism from $U$ to the open subspace $\bG^r_{\bm}
\times \bA^r \times \bA^m$ of the affine space $\bA^{r + r + m}$ with coordinates
$y_1,\ldots,y_r,z_1,\ldots,z_r,w_1,\ldots,w_r$.

By (b), the $k$-points of $U$ are dense in $U$ (since $k$ is infinite), and by (a), the open
set $U$ is dense in $Z(\ul{f}) \cap W$. Therefore the $k$-points of $U$ are dense in $Z(\ul{f}) \cap W$. This shows (c).
\end{proof}

\subsection{Proof of Theorem~\ref{mainthm}}

Let $\ul{d}$ be a multi-degree and suppose $\Sigma^*(\ul{d})$ holds. We show that $\Sigma(\ul{d})$ holds with $\C{c:mainthm}(\ul{d})=\C{c:normal}(\ul{d})$, where $\C{c:normal}$ is as in Proposition~\ref{prop:normal}.

To this end, let $\ul{f} \in \cP_{\ul{d}}$ be given with $\str(\ul{f})
> \C{c:normal}(\ul{d})$. We must show that the $k$-points of $Z=Z(\ul{f})$ are Zariski dense. Let $g$ be a polynomial on $V$ that is not identically zero on~$Z$. We construct a $k$-point of $Z$ at which $g$ is nonzero; since $g$ is arbitrary, this will prove that $Z(k)$ is dense. Let $W$ be as in Proposition~\ref{prop:normal}. Since $g$ does not vanish identically on $Z \cap W$, and the $k$-points of $Z \cap W$ are dense by Proposition~\ref{prop:normal2}(c), it follows that there is a $k$-point of $Z \cap W$ at which $g$ is nonzero. This completes the proof.

\subsection{Proof of Theorem~\ref{mainthm2}} \label{ss:pf-mainthm2}

For a multi-degree $\ul{e}$, set $\Phi(\ul{e})=\C{c:mainthm}(\ul{e})$. Fix a multi-degree~$\ul{d}$, and let $s=s(\ul{d})$ be as in Proposition~\ref{prop:small-subalg}. Let $\ul{f} \in \cP_{\ul{d}}(V)$ be given, and put $Z=Z(\ul{f})$. By Proposition~\ref{prop:small-subalg}, there exists $\ul{g} \in \cP_{\ul{e}}(V)$ with $\len(\ul{e}) \le s$ such that $\str(\ul{g}) \ge \C{c:mainthm}(\ul{e})$ and each $f_i$ belongs to $k[\ul{g}]$. Let $Z'=Z(\ul{g})$, and note that $Z' \subset Z$. Since $Z'$ is defined by $\leq s$ equations, $Z'$ has codimension $\leq s$ in $V$. By Theorem~\ref{mainthm}, $Z'(k)$ is dense in $Z'$. We thus see that the Zariski closure of $Z(k)$ contains $Z'$, and therefore has codimension $\leq s$ in $V$, and thus in $Z$ as well. We can therefore take $\C{c:mainthm2}(\ul{d})=s$.

\subsection{A variant}

Theorem~\ref{mainthm} states that the $k$-points of $Z(\ul{f})$ are dense if $\ul{f}$ has high strength. We now prove a variant of this result: with only a partial strength hypothesis, we show that certain functions are non-vanishing at $k$-points. This proposition plays an important role in \S \ref{s:normal0} and \S \ref{s:normalp}. The proof is similar to that of Theorem~\ref{mainthm2}, but makes crucial use of results from \cite{AH}.

\newC{c:lowdeg} \newC{c:codim}
\begin{proposition} \label{prop:lowdeg}
Suppose $\Sigma(\ul{d})$ and $\Sigma^*(\ul{d})$ hold. Let $i,d'$ be such that $d_1, \ldots, d_i \ge d'$ and $d_{i+1}, \ldots, d_r<d'$. Let $\ul{f} \in \cP_{\ul{d}}(V)$ and $g \in \cP_{d'}(V)$, and suppose $\str(f_1, \ldots, f_i, g)>\C{c:lowdeg}(\ul{d},d')$. Then there exists a closed subvariety $Z'$ of $Z=Z(\ul{f})$ of codimension $ \le \C{c:codim}(\ul{d}, d')$ such that $Z'(k)$ is dense in $Z'$ and $g$ is not identically zero on $Z'$. In particular, there is a $k$-point of $Z$ at which $g$ does not vanish.
\end{proposition}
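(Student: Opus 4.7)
The plan is to build a closed subvariety $Z'\subseteq Z$ cut out by a tuple of high strength, by regularizing only the low-degree part $(f_{i+1},\ldots,f_r)$ of $\ul f$ while leaving the high-strength pair $(f_1,\ldots,f_i,g)$ untouched; Theorem~\ref{thm:AH} will then certify that $g$ does not vanish identically on $Z'$. The construction parallels the reduction of Theorem~\ref{mainthm2} to Theorem~\ref{mainthm}, but is tuned so that $g$ survives as a witness.

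First I would apply Proposition~\ref{prop:small-subalg} to the subtuple $(f_{i+1},\ldots,f_r)$, for a function $\Phi$ to be chosen at the end. This yields $\ul g'\in\cP_{\ul e'}(V)$ with $\ul e'\leq(d_{i+1},\ldots,d_r)$ lexicographically, with $\len(\ul e')\leq s$ for some $s=s(\ul d)$, with $\str(\ul g')>\Phi(\ul e')$, and such that $f_j\in k[\ul g']$ for each $j>i$. Since every entry of $\ul e'$ is at most $d_{i+1}<d'$ and each such $f_j$ is homogeneous of positive degree, $f_j$ has no constant term when expressed as a polynomial in $\ul g'$ and hence lies in the ideal $(\ul g')$; in particular $Z(\ul g')\subseteq Z(f_j)$ for $j>i$.

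Set $\ul h=(f_1,\ldots,f_i,\ul g')$ in decreasing-degree order and put $Z'=Z(\ul h)$; by the previous step $Z'\subseteq Z$, and the codimension of $Z'$ in $V$ is at most $i+s$, which will serve as $\C{c:codim}(\ul d,d')$. The multi-degree $\ul e$ of $\ul h$ is $\leq\ul d$ lexicographically, since the first $i$ entries match $\ul d$ and the remaining entries come from $\ul e'\leq(d_{i+1},\ldots,d_r)$. The key observation is that the degrees occurring in $(f_1,\ldots,f_i,g)$ (all $\geq d'$) and in $\ul g'$ (all $<d'$) are disjoint, so every nontrivial linear combination of same-degree forms in $\ul h$ lies entirely in one of $(f_1,\ldots,f_i)$ or $\ul g'$; the same argument applies to the extended tuple $\ul h^+=(f_1,\ldots,f_i,g,\ul g')$. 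Hence
\[
\str(\ul h),\ \str(\ul h^+)\ \geq\ \min\bigl(\str(f_1,\ldots,f_i,g),\ \str(\ul g')\bigr)\ >\ \min\bigl(\C{c:lowdeg}(\ul d,d'),\ \Phi(\ul e')\bigr).
\]
I would then choose $\Phi$ and $\C{c:lowdeg}$ large enough to dominate $\C{c:mainthm}$ and $\C{c:AH}$ on every multi-degree of length $\leq i+s+1$ with entries bounded by $d_1$ (a finite collection). This lets Theorem~\ref{thm:AH} apply to both $\ul h$ and $\ul h^+$, and either $\Sigma(\ul d)$ or $\Sigma^*(\ul d)$ apply to $\ul h$ (depending on whether $\ul e=\ul d$ or $\ul e<\ul d$); the latter gives that $Z'(k)$ is dense in $Z'$.

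The main obstacle, and the sole place where the Ananyan--Hochster machinery is essential, is showing that $g\not\equiv 0$ on $Z'$. By Theorem~\ref{thm:AH}, both $\ul h$ and $\ul h^+$ are prime and hence regular sequences in $\cP(V)$, so the ideals $(\ul h)$ and $(\ul h^+)$ are prime of heights $i+\len(\ul e')$ and $i+1+\len(\ul e')$ respectively. Since $(\ul h)\subseteq(\ul h^+)$ but these heights differ, the inclusion is strict, so $g\notin(\ul h)$; primality of $(\ul h)$ then gives that $g$ does not vanish identically on $Z'$. Combined with the density of $Z'(k)$ in $Z'$, this produces the required $k$-point of $Z$ at which $g$ does not vanish.
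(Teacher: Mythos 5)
Your proposal is correct and follows essentially the same route as the paper: regularize only the low-degree subtuple $(f_{i+1},\ldots,f_r)$ via Proposition~\ref{prop:small-subalg}, set $Z'=Z(f_1,\ldots,f_i,\ul g')$, use the disjoint-degree observation to lower-bound the strength of the resulting tuple (with and without $g$), apply $\Sigma$ to get density of $Z'(k)$ and Theorem~\ref{thm:AH} to get a prime sequence. The only cosmetic difference is at the final step: where you deduce $g\notin(\ul h)$ from the height discrepancy between the two prime ideals $(\ul h)\subsetneq(\ul h^+)$, the paper simply observes that permutations of prime sequences are prime sequences, so $(\ul h, g)$ being one immediately gives $(\ul h)\subsetneq(\ul h, g)$; these are trivially equivalent.
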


\begin{proof}
We first give the basic idea. Express $f_{i+1}, \ldots, f_r$ in terms of high strength polynomials $(f'_1, \ldots, f'_t)$ using Proposition~\ref{prop:small-subalg}. Then $g, f_1, \ldots, f_i, f'_1, \ldots, f'_t$ has high strength, and so it is a prime sequence (by Theorem~\ref{thm:AH}) and the $k$-points of $Z'=Z(f_1, \ldots, f_i, f'_1, \ldots, f'_t)$ are dense (by $\Sigma$). The prime sequence statement implies that $g$ is non-vanishing on $Z'$.

We now give a rigorous proof, which simply amounts to combining the above idea with some careful bookkeeping. Put $\ul{d}^+=(d_1, \ldots, d_i)$ and $\ul{d}^-=(d_{i+1}, \ldots, d_r)$. For a multi-degree~$\ul{e}$, put
\begin{displaymath}
\Phi(\ul{e}) = \max\{\C{c:mainthm}(\ul{d}^+ \cup \ul{e}), \C{c:AH}(\ul{d}^+ \cup (d') \cup \ul{e})\}
\end{displaymath}
Here $\C{c:mainthm}$ and $\C{c:AH}$ and are as in
Theorems~\ref{mainthm} and~\ref{thm:AH}, and we use $\cup$ to denote
the operation of concatenating tuples and sorting the result. 
Let $s$ be the number produced by Proposition~\ref{prop:small-subalg} for the multi-degree $\ul{d}^-$ and this $\Phi$. Define
\begin{displaymath}
\C{c:lowdeg}(\ul{d},d') = {\textstyle\max_{\ul{e}}} \Phi(\ul{e}),
\end{displaymath}
where the maximum is taken over the finitely many $\ul{e}$'s with $\ul{e} \le \ul{d}^-$ and $\len(\ul{e}) \le s$. We show that the proposition holds with this value of $\C{c:lowdeg}(\ul{d},d')$.

Thus let $\ul{f} \in \cP_{\ul{d}}(V)$ be given with $\str(f_1, \ldots, f_i, g) 
> \C{c:lowdeg}(\ul{d},d')$. Applying
Proposition~\ref{prop:small-subalg}, there exists $\ul{f}' \in
\cP_{\ul{e}}(V)$ with $\ul{e} \le \ul{d}^-$ and $t=\len(\ul{e}) \le
s$ such that $\str(\ul{f}') > \Phi(\ul{e})$ and $f_{i+1}, \ldots, f_r$ belongs to $k[\ul{f}']$. We have
\begin{displaymath}
\str(f_1, \ldots, f_i, f'_1, \ldots, f'_t)=\min\{\str(f_1, \ldots,
f_i),\str(f'_1, \ldots, f'_t)\} > \Phi(\ul{e}) \ge \C{c:mainthm}(\ul{d}^+ \cup \ul{e})
\end{displaymath}
as the polynomials in $(f_1, \ldots, f_i),(f'_1, \ldots, f'_t)$ are of disjoint degrees and the strength of $(f_1, \ldots, f_i)$ is at least the strength of $(f_1, \ldots, f_i, g)$. Note that $\ul{d}^+ \cup \ul{e} \le \ul{d}$, and so $\Sigma(\ul{d}^+ \cup \ul{e})$ holds. Hence the $k$-points of $Z'=Z(f_1, \ldots, f_i, f'_1, \ldots, f'_t)$ are Zariski dense.

Similarly, we have
\begin{displaymath}
\str(f_1, \ldots, f_i, g, f'_1, \ldots, f'_t) > \Phi(\ul{e}) \ge
\C{c:AH}(\ul{d}^+ \cup (d') \cup \ul{e}),
\end{displaymath}
and so $(f_1, \ldots, f_i, g, f'_1, \dots, f'_t)$ is a prime sequence by
Theorem~\ref{thm:AH}. But any permutation of a prime sequence is a prime
sequence, hence $(f_1,\ldots,f_i,f'_1,\ldots,f'_t,g)$ is also a prime
sequence, and it follows that $g$ does not vanish identically on $Z'$. Note that the codimension of $Z'$ in $V$ is at most $i+t$, and so we can take $\C{c:codim}=r+s$.
\end{proof}

\section{Proof of Proposition~\ref{prop:good}: characteristic~0} \label{s:normal0}

Throughout \S \ref{s:normal0}, we fix an infinite Brauer field $k$ and let $\ul{d}=(d_1, \ldots, d_r)$ be a multi-degree.

\subsection{Overview}

The purpose of this section is to prove Proposition~\ref{prop:good} when $k$ has characteristic~0. To do this, we prove the following two independent statements:
\begin{enumerate}
\item A diagonal form of sufficiently high rank admits a good
specialization. 
\item Given $\ul{f} \in \cP_{\ul{d}}(V)$ of sufficiently high strength
and a subspace $F$ of $V$ of low dimension, we can find a line $L$ in
$V$ having a number of desirable properties, including
$f_j$-orthogonality to $F$ for a range of $j$'s.
\end{enumerate}
To prove Proposition~\ref{prop:good}, we apply (b) repeatedly to construct lines $L_1, \ldots, L_n$. The restriction of $f_j$ to $L_1 \oplus \cdots \oplus L_n$ is diagonal, and by (a) it admits a good specialization.

\subsection{Good specialization of diagonal forms}

We say that a polynomial $f$ on a vector space $V$ is \defn{diagonal}
of rank $r$ if there are coordinates $x_1, \ldots, x_n$ on $V$ such
that 
\[
f=a_1 x_1^d+\ldots+a_r x_r^d
\]
with $a_1, \ldots, a_r\in k$ all
nonzero. 

\newC{c:diag}
\begin{proposition} \label{prop:diag}
Suppose $\Sigma^*(d)$ holds and $k$ has characteristic~0. Let $f \in
\cP_d(V)$ be a diagonal form of rank $> \C{c:diag}(d)$. Then there is
a three dimensional subspace $E$ of $V$ such that $f \vert_E$ is good.
\end{proposition}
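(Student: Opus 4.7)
The plan is to build $E=\langle u,v,w\rangle$ with $u,v$ supported in an initial block of $M$ coordinates and $w=e_n$ a single basis vector with $n>M$, using Proposition~\ref{prop:lowdeg} (available because $\Sigma^*(d)$ holds) to compensate for the fact that only the Brauer property, not a density statement, is available for $f$ itself. Fix coordinates so that $f=\sum_\ell a_\ell x_\ell^d$ with all $a_\ell\ne 0$, and for $u,v$ supported in $\{1,\ldots,M\}$ put
\[
R_j(u,v):=\sum_{\ell=1}^M a_\ell u_\ell^{d-j}v_\ell^j,\qquad 0\le j\le d.
\]
A direct expansion, using that $e_n$ has support disjoint from $u,v$, gives
\[
f(xu+yv+ze_n)=\sum_{j=0}^d \binom{d}{j}R_j(u,v)\,x^{d-j}y^j \;+\; a_n z^d,
\]
so it suffices to produce $u,v$ with $R_0(u,v)=f(u)=0$, $R_j(u,v)=0$ for $1\le j\le d-2$, and $R_{d-1}(u,v)\ne 0$; a rescaling of $u$ then normalises the coefficient of $xy^{d-1}$ to $1$.

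Step~1 would construct $u$ with $f(u)=0$ and many nonzero coordinates. Fix $M_0$ (to be chosen below), set $M=M_0(N_k(d)+1)$, and partition $\{1,\ldots,M\}$ into $M_0$ blocks of size $N_k(d)+1$. In each block the Brauer property yields a nontrivial $k$-solution of the induced diagonal equation, supported in that block. Their sum $u\in V^+:=\langle e_1,\ldots,e_M\rangle$ satisfies $f(u)=0$ by disjointness of supports, and has at least $M_0$ nonzero coordinates.

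Step~2 would produce $v$ by a single application of Proposition~\ref{prop:lowdeg}. Each $R_j(u,\cdot)$ is a diagonal form on $V^+$ of degree $j$ whose rank equals $|\operatorname{supp}(u)|\ge M_0$; since $\operatorname{char}(k)=0$, Proposition~\ref{prop:diagstrength} gives $\str(R_j(u,\cdot))\ge M_0/2$ for every $j\ge 1$. The multi-degree $(d-2,d-3,\ldots,1)$ is lexicographically strictly less than $(d)$, so $\Sigma^*(d)$ supplies both $\Sigma$ and $\Sigma^*$ for it. Proposition~\ref{prop:lowdeg} then applies to the inner tuple $(R_1(u,\cdot),\ldots,R_{d-2}(u,\cdot))$ with test polynomial $g=R_{d-1}(u,\cdot)$: since $\deg g=d-1$ exceeds every inner degree, the index $i$ in that proposition is $0$ and its hypothesis reduces to $\str(g)>\C{c:lowdeg}((d-2,\ldots,1),d-1)$. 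Choosing $M_0$ larger than twice this bound then yields a $k$-point $v\in V^+$ with $R_1(u,v)=\cdots=R_{d-2}(u,v)=0$ and $R_{d-1}(u,v)\ne 0$.

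Step~3 would assemble $E$. Any scalar multiple $\lambda u$ satisfies $R_{d-1}(u,\lambda u)=\lambda^{d-1}f(u)=0$, so $v\notin\langle u\rangle$ and $u,v$ are linearly independent. Setting $\C{c:diag}(d):=M$, the assumption on the rank of $f$ guarantees that $a_n\ne 0$ for some $n>M$; taking $w=e_n$ gives $f(w)=a_n\ne 0$ and, by disjoint support, makes $w$ automatically $f$-orthogonal to $\langle u,v\rangle$. After rescaling $u$, the restriction $f|_E$ is exactly $xy^{d-1}+f(v)y^d+a_n z^d$, which is good. The one genuinely delicate step is Step~2: Proposition~\ref{prop:lowdeg} is usable only once the $R_j$ have high strength, which is precisely why Step~1 is engineered not merely to make $f(u)=0$ but to force $u$ to have a controlled number of nonzero entries.
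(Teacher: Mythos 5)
Your proof is correct and follows essentially the same route as the paper's: build a vector $u$ with $f(u)=0$ and many nonzero coordinates by invoking the Brauer property block by block, observe that the coefficient polynomials $R_j(u,\cdot)$ are high-strength diagonal forms, find a second vector $v$ annihilating the intermediate coefficients while keeping $R_{d-1}\ne 0$, and adjoin one fresh basis vector to complete $E$. The two small variations are worth recording: you route the construction of $v$ through Proposition~\ref{prop:lowdeg} (which internally packages the same use of Theorem~\ref{thm:AH} plus $\Sigma$ that the paper applies directly to the prime sequence $(f_d,\ldots,f_1)$), and you obtain linear independence of $u,v$ from the identity $R_{d-1}(u,\lambda u)=\lambda^{d-1}f(u)=0$ rather than, as the paper does, by also forcing $R_d=f(v)\neq 0$. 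Your linear-independence argument is a modest simplification, and it avoids ever invoking a constant for a tuple containing degree $d$ (the paper's $m$ involves $\C{c:AH}((d,d-1,\ldots,1))$); these are cosmetic differences, not a different proof.
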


\begin{proof}
Put
\begin{displaymath}
m = \max\{\C{c:mainthm}((d-2,d-3, \ldots, 1)), \C{c:AH}((d,d-1,
\ldots, 1))\} + 1,
\end{displaymath}
where $\C{c:mainthm}$ is the constant in the statement $\Sigma(\ul{d})$ and $\C{c:AH}$ is the constant in Theorem~\ref{thm:AH}. We show that the proposition holds with 
\[
\C{c:diag}(d) = (N_k(d)+1) \cdot m.
\]
Let $\{e_i\}_{1 \le i \le n}$ be a basis for $V$ in which $f$ is
diagonal, say $f=\sum_{i=1}^{r+1} c_i z_i^d$ where all the $c_i$ are
nonzero and $r \ge \C{c:diag}(d) $. We set aside the $(r+1)$st term in
the sum; in what follows, we work in the space
$V_0=\operatorname{span}(e_1, \ldots, e_r)$. We have $f
\vert_{V_0}=\sum_{i=1}^r c_i z_i^d$. Break up the sum into at least
$m$ blocks, each of which has size $>N_k(d)$. In each block, we
can find a nonzero vector on which $f$ vanishes (by definition of
$N_k(d)$), and this vector has at least two nonzero entries. Putting
these vectors together, we obtain a vector $v \in V_0$ such that
$f(v)=0$ and $v$ has at least $2m$ nonzero entries. 

Let $w=(w_1,\ldots,w_n)$ be a second vector in $V_0$ (to be determined). We have
\begin{displaymath}
f(xv+yw) = \sum_{j=1}^d f_j(w) x^{d-j} y^j, \qquad
f_j(w) = \binom{d}{j} \cdot \sum_{k=1}^r c_k v_k^{d-j} w_k^j
\end{displaymath}
As a function of $w$, each polynomial $f_j$ is a diagonal form of rank
$\ge 2m$, and thus strength $\ge m$ by
Proposition~\ref{prop:diagstrength}. Since the $f_j$'s have different
degrees, it follows that the tuple $(f_d, \ldots, f_1)$ has strength
$\ge m > \C{c:AH}((d,\ldots,1))$, and is therefore a prime
sequence by Theorem~\ref{thm:AH}. In particular, $f_d \cdot f_{d-1}$ is
non-vanishing on the irreducible variety $Z(f_1, \ldots, f_{d-2})$.
Since furthermore 
$m>\C{c:mainthm}((d-2,d-3,\ldots,1))$, we can find a $w \in V_0$ such
that $f_j(w)=0$ for $1 \le j \le d-2$ while $c=f_{d-1}(w)$ and $a=f_d(w)$ are non-zero. We
then have $f(xv+yw)=c xy^{d-1}+a y^d$. Note that we can make $c=1$ by
replacing $v$ with a scalar multiple. Also, the vectors $v$ and $w$
are linearly independent since $f(v)=0$ and $f(w)=a \ne 0$. Since
furthermore $f(e_{r+1})\neq 0$, we find that $f$ is good on $E=\operatorname{span}(v,w,e_{r+1})$. 
\end{proof}

\subsection{Construction of lines}

We now construct certain nice lines. The following proposition and
proof are actually also valid in positive characteristic, and will be
re-used in \S \ref{s:normalp}. 

\newC{c:orth-line}
\begin{proposition} \label{prop:orth-line}
Assume $\Sigma^*(\ul{d})$, let $\ul{f} \in \cP_{\ul{d}}(V)$, let $1 \le i \le r$, and let $F$ be a subspace of~$V$ of dimension $n$. If $\str(\ul{f})>\C{c:orth-line}(\ul{d}, n)$, then there exists a line $L \subset V$ such that
\begin{enumerate}
\item $L+F=L\oplus F$,
\item $L$ is $f_j$-orthogonal to $F$ for all $i \le j \le r$,
\item $f_j \vert_L=0$ for $i<j \le r$, and
\item $f_i \vert_L \ne 0$.
\end{enumerate}
\end{proposition}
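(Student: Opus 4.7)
The plan is to encode properties (a)--(d) of $L = kv$ as a system of polynomial equations (plus one non-vanishing condition) on $v \in V$ whose multi-degree $\ul{e}$ is strictly smaller than $\ul{d}$, and then appeal to the inductive hypothesis $\Sigma^*(\ul{d})$ through Proposition~\ref{prop:lowdeg}.

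Fix a basis $w_1, \ldots, w_n$ of $F$. For each $j \in \{i, \ldots, r\}$, the polarization expansion
\[
f_j(v+w) \;=\; \sum_{\alpha} q_{j, \alpha}(v)\, w^\alpha
\]
defines polynomials $q_{j,\alpha} \in \cP_{d_j-|\alpha|}(V)$, and $kv$ is $f_j$-orthogonal to $F$ precisely when $q_{j,\alpha}(v) = 0$ for every $\alpha$ with $0 < |\alpha| < d_j$. Collect these $q_{j,\alpha}$ together with the polynomials $\{f_j\}_{j > i}$ into a tuple $\ul{h}$ on $V$, and let $\ul{e}$ be its sorted multi-degree. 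Every entry of $\ul{e}$ is $\le d_i$, since the orthogonality polynomials have degree $< d_j \le d_i$, and the degree-$d_i$ entries of $\ul{e}$ are exactly those $f_j$ with $j > i$ and $d_j = d_i$: this is a sub-tuple of $\ul{f}$ missing one copy of $d_i$ relative to $\ul{d}$, namely $f_i$ itself, and this gap at the first ``degree-$d_i$'' slot makes $\ul{e} < \ul{d}$ lexicographically.

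Since $\ul{e} < \ul{d}$, both $\Sigma(\ul{e})$ and $\Sigma^*(\ul{e})$ follow from $\Sigma^*(\ul{d})$, so Proposition~\ref{prop:lowdeg} applies to $\ul{h}$ with $g := f_i$ and $d' := d_i$. Its strength hypothesis is on the sub-tuple of $\ul{h}$ in degrees $\ge d_i$ together with $g$, which is simply $(f_j)_{j \ge i,\,d_j = d_i}$, a sub-tuple of $\ul{f}$ and hence of strength $\ge \str(\ul{f})$. The proposition then yields an irreducible closed subvariety $Z' \subseteq Z(\ul{h})$ of codimension $\le \C{c:codim}(\ul{e}, d_i)$ in $V$ on which $k$-points are Zariski dense and $f_i$ is not identically zero; irreducibility is implicit in the proof of that proposition, where $Z'$ is cut out by a prime sequence. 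Using the elementary bound $\str(\ul{f}) \le \dim V$ (valid whenever some $d_j \ge 2$; the all-linear case is easily handled by hand), we choose $\C{c:orth-line}(\ul{d}, n)$ large enough both to meet the strength requirement of Proposition~\ref{prop:lowdeg} and to force $\dim V > n + \C{c:codim}(\ul{e}, d_i)$. Then $\dim Z' > \dim F$, so $Z' \not\subseteq F$, and the open subset $\{v \in Z' : v \notin F,\, f_i(v) \ne 0\}$ is nonempty. By density of $Z'(k)$ it contains a $k$-point $v$; the line $L := kv$ then satisfies (a)--(d).

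The principal bookkeeping hurdle is to verify $\ul{e} < \ul{d}$: the crucial saving is that the $f_i$-orthogonality requirement on $L$ is captured by the polarizations $q_{i, \alpha}$, \emph{all} of which have degree strictly below $d_i$. Once that comparison is in hand, Proposition~\ref{prop:lowdeg} does the heavy lifting and the dimension argument securing $v \notin F$ is routine. Notably, no characteristic hypothesis is used -- the polarization identity is characteristic-free and all cited tools apply over any Brauer field (which is semi-perfect by Proposition~\ref{prop:semiperfect}) -- consistent with the remark that the proposition is re-used in the positive-characteristic arguments of~\S\ref{s:normalp}.
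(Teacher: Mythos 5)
Your proof follows the same core strategy as the paper's: you encode the conditions as a system of polynomials whose multi-degree $\ul{e}$ strictly decreases, and then invoke Proposition~\ref{prop:lowdeg} with $g=f_i$, $d'=d_i$. The one place where you diverge is in arranging condition~(a), namely $v \notin F$. The paper sets this up by working inside a complementary subspace $V'$ of $F$ from the outset: all the polarization coefficients $f_{j,\ul{a}}$ are regarded as polynomials on $V'$, Proposition~\ref{prop:restriction} is used to show that restricting to $V'$ costs at most $\dim F = n$ in strength, and then $v \in V'$ is produced directly by Proposition~\ref{prop:lowdeg}, so $v \notin F$ is automatic. You instead set up $\ul{h}$ on all of $V$ and force $v \notin F$ afterwards by a dimension count, using the elementary bound $\str(\ul{f}) \le \dim V$ to guarantee $\dim Z' > \dim F$. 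This works, but it makes you rely on two facts the paper's route does not need: first, that $Z'$ in Proposition~\ref{prop:lowdeg} is irreducible --- true (it is cut out by a prime sequence), but not part of the stated conclusion, so as written you are using a black-box proposition beyond its interface; and second, the auxiliary bound $\str(\ul{f}) \le \dim V$, which only holds when some $d_j \ge 2$ and obliges you to carve out the all-linear case by hand. The paper's tactic of restricting to $V'$ avoids both of these issues at the modest cost of invoking Proposition~\ref{prop:restriction}, and is therefore a bit cleaner; it also scales trivially to the iterated use of this proposition in Propositions~\ref{prop:good0} and~\ref{prop:goodp}, where $F$ grows at each step. Your observation that the argument is characteristic-free, and your bookkeeping for $\ul{e} < \ul{d}$ and the strength of the degree-$d_i$ sub-tuple, both match the paper exactly.
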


\begin{proof}
Let $V'$ be a complementary space to $F$ and let $w_1, \ldots, w_n$ be a basis for $F$. Write
\begin{displaymath}
f_i(v+x_1 w_1+\ldots+x_n w_n) = {\textstyle\sum_{\ul{a}}} f_{i,\ul{a}}(v) x^{\ul{a}}
\end{displaymath}
where the sum is over multi-indices $\ul{a}=(a_1, \ldots, a_n)$ and $x^{\ul{a}}=x_1^{a_1} \cdots x_n^{a_n}$. We aim to find a $v \in V'$ satisfying the following conditions:
\begin{enumerate}[(i)]
\item $f_{j,\ul{a}}(v)=0$ for $i \le j \le r$ and $0<\vert \ul{a}
\vert<d_j$, where $\vert \ul{a} \vert=a_1+\ldots+a_n$; 
\item $f_j(v)=0$ for $i<j \le r$; and 
\item $f_i(v) \ne 0$.
\end{enumerate}
Given such a $v$, we can then take $L=\operatorname{span}(v)$. Indeed, since we have taken $v \in V'$, condition (a) holds. The orthogonality conditions in (b) are equivalent to the conditions in (i), while the remaining two conditions on $L$ match with (ii) and (iii).

The equations in (i) have degree $<d_j\leq d_i$. Thus, letting $\ul{e}$ be the
multi-degree of the equations in (i) and (ii), we see that no entry of
$\ul{e}$ exceeds $d_i$. Let $\ell$ be maximal such that $d_{\ell}=d_i$. The only equations in (i)
and (ii) of degree $d_i$ are those in (ii) with $i<j \le \ell$; all
others have degree $<d_i$. So there are strictly fewer entries in
$\ul{e}$ equal to $d_i$ than in $\ul{d}$. Thus $\ul{e}<\ul{d}$.  Let $s$ be the strength of the the tuple of
restrictions of $f_i, \ldots, f_{\ell}$ to $V'$. By
Proposition~\ref{prop:lowdeg} applied with $g=f_i|_{V'}$, we can find a $v \in V'$ satisfying (i), (ii), and (iii) provided that $s>\C{c:lowdeg}(\ul{e}, d_i)$.

Now, the strength of $f_i, \ldots, f_{\ell}$ on $V$ is at least
$\str(\ul{f})$. In restricting to $V'$, we lose at most $\dim{F}=n$ in
strength by Proposition~\ref{prop:restriction}. Thus $s \ge \str(\ul{f})-n$. Hence it suffices when $\C{c:orth-line}(\ul{d}, n)\geq \C{c:lowdeg}(\ul{e}, d_i)+n$. Note that $\ul{e}$ only depends on $\ul{d}$, $i$ and $n$. So we take $\C{c:orth-line}(\ul{d}, n)$ to be $\max_{i\in\{1,\ldots,r\}}\C{c:lowdeg}(\ul{e}, d_i)+n$.
\end{proof}

\subsection{Completion of proof}

We now prove Proposition~\ref{prop:good} in characteristic~0. We restate the result for convenience:

\begin{proposition} \label{prop:good0}
Assume $\Sigma^*(\ul{d})$ holds and $k$ has characteristic~0. Let $\ul{f} \in \cP_{\ul{d}}(V)$, let $1
\le i \le r$, and let $F$ be a subspace of $V$ of dimension $n$. If $\str(\ul{f})>\C{c:good}(\ul{d}, n)$, then there is a three dimensional subspace $E$ of $V$ such that:
\begin{enumerate}
\item $E+F=E\oplus F$,
\item $E$ and $F$ are $f_j$-orthogonal for $i \le j \le r$,
\item $f_j$ vanishes on $E$ for $i<j \le r$, and
\item the restriction of $f_i$ to $E$ is good.
\end{enumerate}
\end{proposition}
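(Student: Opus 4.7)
The plan is to combine the two ingredients flagged in the overview: iterate Proposition~\ref{prop:orth-line} to produce lines $L_1,\ldots,L_N$ whose span $W$ is in direct sum with $F$ and on which $f_i$ restricts to a diagonal form of rank $N$, with $f_j|_W=0$ for $j>i$ and $W$ jointly $f_j$-orthogonal to $F$ for $i\le j\le r$; then apply Proposition~\ref{prop:diag} to cut out a three dimensional subspace $E\subset W$ on which $f_i$ becomes good.

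Concretely, set $N:=\C{c:diag}(d_i)+1$ and, replacing $\C{c:orth-line}(\ul{d},\cdot)$ by its running maximum if necessary so that it is non-decreasing in its second argument, take
\begin{displaymath}
\C{c:good}(\ul{d},n)\;:=\;\C{c:orth-line}(\ul{d},n+N-1).
\end{displaymath}
I would build the lines iteratively: having chosen $L_1,\ldots,L_{\alpha-1}$, set $F_{\alpha-1}:=F\oplus L_1\oplus\cdots\oplus L_{\alpha-1}$, which has dimension $n+\alpha-1\le n+N-1$, and apply Proposition~\ref{prop:orth-line} to $\ul{f}$, $i$, and $F_{\alpha-1}$ to obtain $L_\alpha$. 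The strength hypothesis $\str(\ul{f})>\C{c:orth-line}(\ul{d},n+\alpha-1)$ is satisfied at every step by the choice of $\C{c:good}$ and monotonicity.

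The next step is to verify that $W:=L_1\oplus\cdots\oplus L_N$ has the claimed properties. Crucially, Proposition~\ref{prop:orth-line} gives $f_j$-orthogonality of $L_\alpha$ to all of $F_{\alpha-1}$, not merely to each previous summand individually, so a straightforward induction on $\alpha$ promotes this to joint $f_j$-orthogonality of $F,L_1,\ldots,L_N$ for each $j\in\{i,\ldots,r\}$. Hence $f_j|_W=0$ for $j>i$ (since $f_j|_{L_\alpha}=0$ and the $L_\alpha$ are jointly $f_j$-orthogonal), $W$ and $F$ are $f_j$-orthogonal, and in coordinates dual to chosen generators $e_\alpha$ of $L_\alpha$ we get
\begin{displaymath}
f_i|_W \;=\; a_1 x_1^{d_i}+\cdots+a_N x_N^{d_i}, \qquad a_\alpha:=f_i(e_\alpha)\ne 0,
\end{displaymath}
a diagonal form of rank $N>\C{c:diag}(d_i)$. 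Proposition~\ref{prop:diag} then supplies a three dimensional $E\subset W$ with $f_i|_E$ good, and conditions (a)--(c) for $E$ follow immediately from $E\subseteq W$ together with $W\cap F=0$ and the properties of $W$ just verified; condition (d) is the conclusion of Proposition~\ref{prop:diag}.

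The main technical point to watch is the distinction between pairwise and joint $f_j$-orthogonality, which diverge substantively when $\deg f_j>2$; this is precisely why Proposition~\ref{prop:orth-line} was formulated to produce orthogonality against an entire subspace rather than against individual vectors, and it is what makes the induction above go through without issue. Once that is granted, the remainder is bookkeeping of the strength bounds through the $N$ iterations, and the characteristic zero hypothesis enters only through its use in Proposition~\ref{prop:diag}.
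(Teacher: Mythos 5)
Your proposal is correct and mirrors the paper's own proof: iterate Proposition~\ref{prop:orth-line} to build $\C{c:diag}(d_i)+1$ lines, note $f_i$ restricts to a full-rank diagonal form on their span, and apply Proposition~\ref{prop:diag}. The only cosmetic omission is that since your $N$ and hence your bound $\C{c:orth-line}(\ul{d},n+N-1)$ depend on $i$, one should take the maximum over $i\in\{1,\ldots,r\}$ to obtain a constant $\C{c:good}(\ul{d},n)$ depending only on $\ul{d}$ and $n$, as the paper does.
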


\begin{proof}
Let $m=\C{c:diag}(d_i)+1$, where $\C{c:diag}$ is given in Proposition~\ref{prop:diag}. We construct lines $L_1, \ldots, L_m$ in $V$ having the following properties:
\begin{enumerate}[(i)]
\item $L_1+ \ldots+ L_m+ F=L_1\oplus \cdots\oplus L_m\oplus F$,
\item $L_1, \ldots, L_m, F$ are $f_j$-orthogonal for $i \le j \le r$,
\item $f_j$ vanishes on $L_1,\ldots,L_m$ for $i<j \le r$, and 
\item $f_i$ is non-vanishing on each of $L_1,\ldots,L_m$.
\end{enumerate}
Assume for the moment we have this. The restriction of $f_i$ to $L_1 \oplus \cdots \oplus L_m$ is diagonal of rank $m$, and we can take $E \subset L_1 \oplus \cdots \oplus L_m$ to be the space produced by Proposition~\ref{prop:diag}.

To construct the $L_i$'s, we iteratively apply
Proposition~\ref{prop:orth-line}: we first construct $L_1$ orthogonal to $F$, then construct $L_2$ orthogonal to $L_1 \oplus F$, and so on. To construct $L_1$, we need $\str(\ul{f})>\C{c:orth-line}(\ul{d},n)$; to construct $L_2$, we need
$\str(\ul{f})>\C{c:orth-line}(\ul{d},n+1)$; continuing in this
manner, to construct $L_m$, we need
$\str(\ul{f})>\C{c:orth-line}(\ul{d}, n+m-1)$. We thus find
that we can take 
\[
\C{c:good}(\ul{d}, n)={\textstyle\max_{i\in\{1,\ldots,r\}}} \C{c:orth-line}(\ul{d},n+
\C{c:diag}(d_i)),
\]
where we assume, as we may, that $\C{c:orth-line}(\ul{d},n)$ is increasing in $n$. 
\end{proof}

\section{Proof of Proposition~\ref{prop:good}: characteristic \texorpdfstring{$p$}{p}} \label{s:normalp}

\subsection{Overview}

In this section we prove Proposition~\ref{prop:good} when $k$ has
characteristic~$p>0$. The strategy used in the previous section
does not quite work in this setting. Indeed, if $f$ is a diagonal
form of degree $p$, then any specialization of $f$ is also diagonal
and hence not good. In particular, Proposition~\ref{prop:diag}
does not generalize to this setting. On the other hand, since $k$
is semi-perfect, diagonal forms of degree $p$ have low strength, so this does not
contradict Proposition~\ref{prop:good}.

However, a slight variation of the strategy does work. Work in the setting of Proposition~\ref{prop:good}. We construct a plane $M$ in $V$ that is linearly independent and $f_j$-orthogonal to $F$ for all $i \le j \le r$, and such that the restriction of $f_i$ to $M$ has the form $xy^{d_i-1}$, where $x$ and $y$ are coordinates on $M$. Unfortunately, we cannot directly show that the $f_j$'s with $j>i$ vanish on $M$. To solve this problem, we iterate the construction to find planes $M_1, \ldots, M_n$ with similar properties. Since the $f_j$'s have a fairly simple form on $M_1 \oplus \cdots \oplus M_n$, we are able to find a plane $M$ inside of here having the properties of our original plane together with the necessary vanishing of $f_j$'s with $j>i$. We then appeal to Proposition~\ref{prop:orth-line} to find a line $L$ such that $f_i$ is good on $M \oplus L$, and various other conditions hold.

\subsection{Construction of adapted subplanes}

For the purposes of this section, we define a \defn{plane} to be a two-dimensional vector space $M$ equipped with an ordered basis $(v,w)$. Given a plane $(M,v,w)$ and $f \in \cP_d(M)$, write
\begin{displaymath}
f(xv+yw) = \sum_{e=0}^d c_e x^e y^{d-e}.
\end{displaymath}
We define $c_e(f)$ to be the coefficient $c_e$ above. Suppose $M_1,
\ldots, M_n$ are planes, with bases $(v_j,w_j)$. An \defn{adapted
subplane} in $M_1 \oplus \cdots \oplus M_n$ is a two dimensional subspace $M$ equipped with a basis $(v,w)$ such that $v \in \operatorname{span}(v_1, \ldots, v_n)$ and $w \in \operatorname{span}(w_1, \ldots, w_n)$. Suppose we are in this situation, and write $v=\sum_{j=1}^n \alpha_j v_j$ and $w=\sum_{j=1}^n \beta_j w_j$. Let $f \in \cP_d(M_1 \oplus \cdots \oplus M_n)$ be such that $M_1, \ldots, M_n$ are $f$-orthogonal. Then
\begin{displaymath}
c_e(f \vert_M) = \sum_{j=1}^n c_e(f \vert_{M_j}) \alpha_j^e \beta_j^{d-e},
\end{displaymath}
In particular, we see that if $c_e(f \vert_{M_j})$ vanishes for each $j$ then so does $c_e(f \vert_M)$. We will require the following result:

\newC{c:plane}
\begin{proposition} \label{prop:plane}
Suppose $\Sigma^*(d)$ holds. Let $V=M_1 \oplus \cdots \oplus M_n$ be a direct sum of planes, and let $f_1, \ldots, f_r \in \cP_d(V)$. Suppose that
\begin{enumerate}
\item $M_1, \ldots, M_n$ are $f_i$-orthogonal for each $1 \le i \le
r$,
\item $c_1(f_1 \vert_{M_j}) \ne 0$ for $1 \le j \le n$, and
\item $c_1(f_i \vert_{M_j})=0$ for $2 \le i \le r$ and $1 \le j \le n$.
\end{enumerate}
If $n>\C{c:plane}(d,r)$, then there exists an adapted subplane $M$ of $V$ such that $c_e(f_1 \vert_M)=\delta_{e,1}$ and $f_i \vert_M=0$ for $2 \le i \le r$.
\end{proposition}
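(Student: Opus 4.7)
The plan is to look for an adapted subplane $M$ with basis $(v,w)$ given by $v = \sum_j \alpha_j v_j$ and $w = \sum_j \beta_j w_j$ satisfying
$$c_e(f_i\vert_M) \;=\; \sum_{j=1}^n c_e(f_i\vert_{M_j})\,\alpha_j^e\beta_j^{d-e} \;=\; \delta_{(i,e),(1,1)}$$
for every $(i,e) \in \{1,\ldots,r\}\times\{0,1,\ldots,d\}$; the pairs $(i,1)$ with $i\ge 2$ are automatic from hypothesis~(c). This is $rd+1$ polynomial equations in the $2n$ unknowns $(\alpha,\beta)$. I would first choose $\beta$, then $\alpha$, and finally rescale $\alpha$ to normalise $P_{1,1}(\alpha,\beta):=c_1(f_1\vert_M)$ to~$1$.

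\emph{Step 1 (choose $\beta$).} The equations depending only on $\beta$ are the $r$ diagonal forms $\sum_j c_0(f_i\vert_{M_j})\beta_j^d = f_i(w) = 0$. Brauer's theorem (Theorem~\ref{thm:brauer}) supplies a nonzero solution once $n > \C{c:brauer}((d,\ldots,d))$. To additionally ensure $\beta_j\ne 0$ on a prescribed, arbitrarily large subset $S \subseteq \{1,\ldots,n\}$, I would partition $\{1,\ldots,n\}$ into $|S|$ blocks each larger than $\C{c:brauer}((d,\ldots,d))$, solve the system independently inside each block, and assemble the local solutions.

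\emph{Step 2 (choose $\alpha$ and rescale).} Setting $\alpha_j:=0$ for $j\notin S$ and fixing $\beta$, the remaining equations on $(\alpha_j)_{j\in S}$ are diagonal forms of degrees $e=2,\ldots,d$ (one per pair $(i,e)$ with $i\in\{1,\ldots,r\}$), together with the linear normalisation $P_{1,1}(\alpha) = \sum_{j\in S}c_1(f_1\vert_{M_j})\beta_j^{d-1}\alpha_j = 1$, whose coefficients are all nonzero by hypothesis~(b) and the choice of $\beta$. Because $\alpha\mapsto s\alpha$ multiplies each homogeneous $P_{i,e}$ by $s^e$, it suffices to produce $\alpha^0$ satisfying the homogeneous conditions for $e \ge 2$ with $P_{1,1}(\alpha^0)\ne 0$, then rescale by $s = 1/P_{1,1}(\alpha^0)$. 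For the degree-$e$ conditions with $2\le e\le d-1$, I would invoke Proposition~\ref{prop:lowdeg} with $\ul f$ the diagonal tuple of these forms and $g = P_{1,1}$; the strength hypothesis follows from Proposition~\ref{prop:diagstrength} because every $\beta_j$ is nonzero on $S$, except in the bad range $p\mid e$, where one passes through absolute strength via Proposition~\ref{prop:semiperfect} and Theorem~\ref{thm:astr}. The degree-$d$ conditions $P_{i,d}(\alpha) = f_i(v) = 0$ are again diagonal of degree $d$ and are handled by another application of Brauer's theorem.

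The \emph{main obstacle} is coordinating these steps: Proposition~\ref{prop:lowdeg} returns a subvariety $Z'$ with dense $k$-points and non-vanishing $P_{1,1}$, and one must arrange that the degree-$d$ forms $P_{i,d}$ admit a common $k$-zero inside $Z'$ (we cannot feed these into Proposition~\ref{prop:lowdeg} directly, since the associated multi-degree $(d,\ldots,d,d-1,\ldots,2,\ldots)$ is lexicographically larger than $(d)$ and so not covered by $\Sigma^*(d)$). I would resolve this by reserving, via a finer block partition of $S$, a disjoint sub-block $S'\subseteq S$ on which to satisfy the degree-$d$ conditions by Brauer independently, while using $S\setminus S'$ to feed the Proposition~\ref{prop:lowdeg} step. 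The resulting bookkeeping forces $\C{c:plane}(d,r)$ to be a tower built out of $N_k(d)$, $\C{c:brauer}$, $\C{c:lowdeg}$ and the bounds from Theorem~\ref{thm:astr}, but finite.
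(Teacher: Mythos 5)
Your plan — fix $\beta$ so that $f_i(w)=0$ and $\beta_j\neq0$ on a large set $S$, then choose $\alpha$ to kill $c_e$ for $e\ge 2$ — is reasonable in outline, and you correctly put your finger on the real obstruction: the $rd$ forms $P_{i,e}(\alpha)$ with $e\in\{2,\ldots,d\}$ have multi-degree $\ge (d)$, so $\Sigma^*(d)$ does not cover them and Proposition~\ref{prop:lowdeg} cannot be invoked for the whole batch. Unfortunately, the repair you propose does not close the gap. If you split $S$ into disjoint blocks $S'$ and $S\setminus S'$ and satisfy the degree-$d$ conditions via Brauer on $S'$ while using Proposition~\ref{prop:lowdeg} on $S\setminus S'$ for the degrees $2,\ldots,d-1$, the two pieces contaminate each other when you reassemble $\alpha$: the final form $P_{i,e}(\alpha)=\sum_{j\in S'}(\cdots)+\sum_{j\in S\setminus S'}(\cdots)$ receives a nonzero contribution from whichever block was \emph{not} responsible for killing it, since you cannot arrange $\alpha_j=0$ on both sides simultaneously. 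There is no homogeneity trick that lets you decouple the degree-$d$ equations from the others while still producing a single global $\alpha$.

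There is a second, more minor gap: in Step~2 you invoke Proposition~\ref{prop:diagstrength} to claim the diagonal forms $P_{i,e}$ have high strength, but their coefficients are $c_e(f_i\vert_{M_j})\beta_j^{d-e}$, and nothing guarantees that the factors $c_e(f_i\vert_{M_j})$ are nonzero for many $j$. You must either argue separately that enough of them are nonzero, or handle the case where some vanish directly (e.g.\ if $c_e(f_i\vert_{M_j})=0$ for some $j$, one can simply support $\alpha$ at that $j$), as Lemma~\ref{lem:atomic-2} of the paper does.

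The paper sidesteps both problems by \emph{not} solving all $rd+1$ conditions in one shot. Instead it kills the $rd$ unwanted coefficients \emph{one at a time}: at each stage it partitions the current list of planes into many blocks, applies Lemma~\ref{lem:atomic-3} inside each block to produce adapted subplanes on which one chosen coefficient $c_e(f_\ell)$ vanishes and $c_1(f_1)$ stays nonzero, and then iterates on the resulting (shorter) list of planes. The crucial observation is that $c_e(f\vert_M)=\sum_j c_e(f\vert_{M_j})\alpha_j^e\beta_j^{d-e}$, so once a coefficient is made to vanish on every plane in the current list, it remains zero on every subsequent adapted subplane. Because each step involves only a \emph{single} coefficient, the associated lemma (Lemma~\ref{lem:atomic-2}) only needs $\Sigma$ for degrees $\le d-1$ together with Brauer's theorem for the cases $e\in\{0,d\}$ — exactly within the reach of $\Sigma^*(d)$. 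Your simultaneous approach, by contrast, is forced to confront a multi-degree $\ge(d)$ and has no way around it. If you want to salvage a variant of your argument, you would need to adopt a similar cascading structure rather than a single split of $S$.
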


Note that the conclusion implies that $f_1 \vert_M=xy^{d-1}$, where
$x$ and $y$ are the coordinates on $M$. We require two lemmas before giving the proof. The first lemma is really the key result:

\newC{c:atomic-2}
\begin{lemma} \label{lem:atomic-2}
Suppose $\Sigma^*(d)$ holds and fix $e \in \{0,\ldots,d\} \setminus
\{1\}$. Let $a_1, \ldots, a_n \in k$ be arbitrary and let $b_1, \ldots,
b_n \in k$ be nonzero. For $x,y \in k^n$, put
\begin{displaymath}
f(x,y) = a_1 x_1^e y_1^{d-e} + \cdots + a_n x_n^e y_n^{d-e}, \qquad
g(x,y) = b_1 x_1 y_1^{d-1} + \cdots + b_n x_n y_n^{d-1}.
\end{displaymath}
Assuming $n>\C{c:atomic-2}(d)$, we can find $x$ and $y$ such that
$f(x,y)=0$ and $g(x,y) \ne 0$.
\end{lemma}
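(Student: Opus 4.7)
The plan is to substitute one of the two sets of variables by constants to reduce to a diagonal form of degree strictly less than $d$, and then invoke the inductive hypothesis $\Sigma^*((d))$ directly; a separate descent argument handles the case where both substitutions are obstructed by the characteristic. First I would dispose of the edge cases: if some $a_i=0$, taking $x_i=y_i=1$ and all other coordinates to zero gives $f=0$ and $g=b_i\ne 0$; and if $e\in\{0,d\}$ the form $f$ depends on only one of the two variable sets, so applying Brauer's theorem to $f=0$ and choosing the other variable set freely to make $g\ne 0$ handles these cases. Henceforth I assume $2\le e\le d-1$ and all $a_i\ne 0$.

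Next I treat the good characteristic case. If $p\nmid e$, set $y_i=1$ for all $i$; the problem reduces to finding $x\in k^n$ with $\tilde f(x):=\sum_i a_i x_i^e=0$ and $\tilde g(x):=\sum_i b_i x_i\ne 0$. If instead $p\mid e$ but $p\nmid(d-e)$, set $x_i=c_i$ for nonzero constants $c_i\in k$ (to be chosen generically) and reduce to finding $y\in k^n$ with $\tilde f(y):=\sum_i a_i c_i^e y_i^{d-e}=0$ and $\tilde g(y):=\sum_i b_i c_i y_i^{d-1}\ne 0$. In both cases $\tilde f$ is a diagonal form of degree coprime to $p$ with all coefficients nonzero, so Proposition~\ref{prop:diagstrength} gives $\str(\tilde f)\ge n/2$; since $\deg\tilde f<d$, $\Sigma^*((d))$ yields Zariski density of the $k$-points of $Z(\tilde f)$ once $n$ is large.

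It then suffices to check that $\tilde g$ is not identically zero on $Z(\tilde f)$. In the first substitution this is automatic from $\deg\tilde g=1<e=\deg\tilde f$. In the second I would restrict to each pair $(y_i,y_j)$, setting the other coordinates to zero, and observe that the divisibility $\tilde f_{ij}\mid\tilde g_{ij}$ over $\bar k$ is equivalent to the algebraic relation $(c_i/c_j)^{d(e-1)}=(a_j/a_i)^{d-1}(b_i/b_j)^{d-e}$. Because $e\ne 1$ the exponent $d(e-1)$ is nonzero, so choosing $(c_i)$ generically in the infinite field $k$ defeats this relation for at least one pair and forces $\tilde f\nmid\tilde g$. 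By Zariski density a $k$-point of $Z(\tilde f)$ with $\tilde g\ne 0$ exists, and lifting through the substitution produces the desired $(x,y)$.

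The hard part will be the remaining case $p\mid\gcd(e,d-e)$ (forcing $p\mid d$), in which neither substitution lowers the degree since $f$ is, over $\bar k$, a $p^v$-th power with $v=\min(v_p(e),v_p(d-e))$. Here I would exploit that $k$ is semi-perfect by Proposition~\ref{prop:semiperfect}, so $k^{1/p^v}$ is finite over $k$: fixing a $k$-basis $(\beta_j)_{j=1}^M$ and writing $a_i^{1/p^v}=\sum_j c_{ij}\beta_j$, the $k$-points of $Z(f)$ coincide with the common $k$-solutions of the system $F_j(x,y):=\sum_i c_{ij}\, x_i^{e/p^v}\, y_i^{(d-e)/p^v}=0$ for $j=1,\ldots,M$, each of degree $d/p^v<d$. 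Applying Proposition~\ref{prop:lowdeg} to $(F_1,\ldots,F_M)$ with $g$ as auxiliary (taking $i=0$ and $d'=d$) reduces the strength hypothesis to $\str(g)>C$, which holds since $\partial g/\partial x_i=b_i y_i^{d-1}$ forces $y_i^{d-1}$ into any strength-decomposition ideal and hence $\str(g)\ge n/2$. The main obstacle is ensuring that the coordinates $c_{ij}$ provide enough genericity to bound the strength of the tuple $(F_1,\ldots,F_M)$ uniformly in the $a_i$'s; this can be arranged for $n$ large via a pigeonhole argument on the bounded-dimensional vectors $(c_{i1},\ldots,c_{iM})\in k^M$.
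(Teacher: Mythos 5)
Your case analysis agrees with the paper's for $e\in\{0,d\}$, for $p\nmid e$ (set $y=(1,\ldots,1)$), and for $p\nmid d-e$ (set $x$ to a suitable vector of non-zero constants); in the last of these the paper finds $x$ by exhibiting a $\overline{k}$-root of the restricted binary form $\tilde f_{ij}$ where $\tilde g_{ij}$ does not vanish, which is essentially the same existence argument you sketch via divisibility of binary forms. The genuine divergence is in the hardest case $p\mid\gcd(e,d-e)$. The paper exploits that $p\mid d$ forces $p\nmid d-1$: after setting $x=(1,\ldots,1)$, the form $g^x$ is a diagonal form of degree $d-1$ with non-zero coefficients and hence has strength $\geq n/2$ by Proposition~\ref{prop:diagstrength}, while $f^x$ has degree $d-e\leq d-2<d-1$; one then applies Proposition~\ref{prop:lowdeg} with $\ul f=(f^x)$, $g=g^x$, $i=0$. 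Your Frobenius-descent route instead uses semi-perfectness (Proposition~\ref{prop:semiperfect}) to write $f^{1/p^v}=\sum_j\beta_j F_j$ with $F_j$ defined over $k$ of degree $d/p^v<d$, so that the $k$-points of $Z(f)$ are exactly the common $k$-zeros of $F_1,\ldots,F_M$, and then applies Proposition~\ref{prop:lowdeg} to $(F_1,\ldots,F_M)$ with the original $g$ (degree $d$, $i=0$). This is a valid alternative; it is a bit heavier than the paper's one-line observation but is conceptually natural and does go through.

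One correction to your final paragraph: the worry about bounding $\str(F_1,\ldots,F_M)$ uniformly in the $a_i$ is unfounded, and no pigeonhole argument is needed. With $i=0$, the strength hypothesis of Proposition~\ref{prop:lowdeg} is only $\str(g)>\C{c:lowdeg}(\ul{d},d)$; the tuple $\ul f=(F_1,\ldots,F_M)$ is allowed to have arbitrary (even zero) strength, since the regularization step (Proposition~\ref{prop:small-subalg}) inside the proof of Proposition~\ref{prop:lowdeg} handles that internally. Your bound $\str(g)\geq n/2$ via the partial derivatives $\partial g/\partial x_i=b_iy_i^{d-1}$ is correct, so the argument is already complete once $n$ exceeds $2\C{c:lowdeg}(\ul{d},d)$, where $\ul{d}=(d/p^v,\ldots,d/p^v)$ of length $M=[k^{1/p^v}:k]$ depends only on $d$, $e$ and $k$.
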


\begin{proof}
If $a_i=0$ for some $i$, then we can take $x=y$ to be the $i$th
standard basis vector of $k^n$. Thus, in what follows, we assume that the
$a_i$ are nonzero. We write $f_y$ (resp.\ $f^x$) when we fix a vector
$y$ and think of $f(x,y)$ as a function of $x$ (resp.\ {\em vice
versa}), and similarly with $g$. We break the proof up into several different cases.

\textit{Case 1: $e=0$.} The defining property of Brauer fields shows
that if $n>N_k(d)$, a non-zero $y$ exists with $f(x,y)=0$; note that $f$ does not depend on $x$ in this case. Now simply pick $x$ such that $g(x,y) \ne 0$.

\textit{Case 2: $e=d$.} This is similar to Case~1, and again it suffices
that $n>N_k(d)$. 

\textit{Case 3: $p \nmid e$ and $e \ne 0,d$.} Let $y=(1,\ldots,1)$.
Then $f_y$ has strength $\geq n/2$ by Proposition~\ref{prop:diagstrength}
and the linear form $g_y$ is non-vanishing on $Z(f_y)$. By $\Sigma((e))$, we can thus
find $x$ as required if $n > 2 \cdot \C{c:mainthm}((e))$.

\textit{Case 4: $p \nmid d-e$ and $e \ne 0,d$.} Choose $x$ with all
components non-zero such that $g^x$ is not identically zero on $Z(f^x)$;
we prove below that such an $x$ exists. The argument in Case 3 shows that
if $n> 2 \cdot \C{c:mainthm}((d-e))$, then we can find a non-zero $y$
with $f^x(y)=0$ and $g^x(y) \ne 0$.

We now prove the existence of $x$. Choose $x_1,x_2 \in k^{\times}$
such that some root $(\zeta_1,\zeta_2) \in \ol{k}^2$ of $a_1x_1^e y_1^{d-e}+a_2x_2^e
y_2^{d-e}$ is not a root of $b_1x_1 y_1^{d-1}+b_2x_2y_2^{d-1}$; we
are thinking of these as binary forms in $y_1$ and $y_2$. One sees
that such $(x_1,x_2)$ exists by explicitly computing roots. Put
$x_3=\cdots=x_n=1$. Since $(\zeta_1,\zeta_2,0,\ldots,0)$ is a
$\ol{k}$-point of $Z(f^x)$ at which $g^x$ is non-zero, we see that this
$x$ works. For this we need $n > 1$. 

\textit{Case 5: $p \mid d$ and $e \ne 0,d$.} Let $x=(1, \ldots, 1)$.
Since $p \nmid d-1$, it follows that $g^x$ has strength $\geq n/2$
(Proposition~\ref{prop:diagstrength}). Proposition~\ref{prop:lowdeg}
with $g=g^x$ now shows that if $n > 2 \cdot \C{c:lowdeg}((d-e),d-1)$ we can find $y$ such that $g^x(y) \ne 0$ and $f^x(y)=0$.

Note that we are necessarily in one of the above cases, as if $p$
divides $e$ and $d-e$ then it also divides $d$. Summarizing, we see
that we can take $\C{c:atomic-2}(d)$ to be equal to the maximum of ($1$
and)
$N_k(d),2\cdot\C{c:mainthm}((e)),2\cdot\C{c:mainthm}((d-e)),2\cdot\C{c:lowdeg}((d-e),d-1)$
over all $e=2,\ldots,d-1$.
\end{proof}

We now recast the above lemma in terms of specializations of polynomials.

\begin{lemma} \label{lem:atomic-3}
Suppose $\Sigma^*(d)$ holds and fix $e \in \{0,\ldots,d\} \setminus
\{1\}$. Let $V=M_1 \oplus \cdots \oplus M_n$ be a sum of planes. Let
$f,g \in \cP_d(V)$ be such that $M_1, \ldots, M_n$ are $f$- and
$g$-orthogonal, and $c_1(g|_{M_j}) \ne 0$ for all $j$. Assuming $n>\C{c:atomic-2}(d)$,
there exists an adapted subplane $M$ of $V$ such that $c_e(f
\vert_M)=0$ and $c_1(g \vert_M) \ne 0$.
\end{lemma}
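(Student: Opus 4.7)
The plan is to deduce this directly from Lemma~\ref{lem:atomic-2} by translating the statement about adapted subplanes into the diagonal-form setup of the lemma.

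First I would write a general element of an adapted subplane as $v=\sum_{j=1}^n \alpha_j v_j$ and $w=\sum_{j=1}^n \beta_j w_j$, where $(v_j,w_j)$ is the chosen basis of $M_j$. The $f$- and $g$-orthogonality of $M_1,\ldots,M_n$, together with the coefficient formula recorded just before Proposition~\ref{prop:plane}, gives
\[
c_e(f\vert_M)=\sum_{j=1}^n c_e(f\vert_{M_j})\,\alpha_j^e\beta_j^{d-e},\qquad c_1(g\vert_M)=\sum_{j=1}^n c_1(g\vert_{M_j})\,\alpha_j\beta_j^{d-1}.
\]
Setting $a_j:=c_e(f\vert_{M_j})$ (arbitrary) and $b_j:=c_1(g\vert_{M_j})$ (nonzero by hypothesis), these are precisely the polynomials $f(\alpha,\beta)$ and $g(\alpha,\beta)$ of Lemma~\ref{lem:atomic-2}, with the variables $x,y$ there playing the role of $\alpha,\beta$.

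Next, since $n>\C{c:atomic-2}(d)$ and $e\in\{0,\ldots,d\}\setminus\{1\}$, I apply Lemma~\ref{lem:atomic-2} to produce vectors $\alpha,\beta\in k^n$ with $\sum_j a_j\alpha_j^e\beta_j^{d-e}=0$ and $\sum_j b_j\alpha_j\beta_j^{d-1}\ne 0$. I then define $v,w$ from these $\alpha,\beta$ as above and let $M$ be the linear span with ordered basis $(v,w)$. By construction $c_e(f\vert_M)=0$ and $c_1(g\vert_M)\ne 0$, so the only remaining issue is to verify that $M$ actually is an adapted subplane, i.e.\ that $v$ and $w$ are linearly independent.

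For this final check I would observe that the non-vanishing of $c_1(g\vert_M)$ forces some product $\alpha_j\beta_j^{d-1}$ to be nonzero, so both $v$ and $w$ are nonzero. Moreover, because $V=M_1\oplus\cdots\oplus M_n$ has $\{v_1,\ldots,v_n,w_1,\ldots,w_n\}$ as a basis, the subspaces $\operatorname{span}(v_j)$ and $\operatorname{span}(w_j)$ are complementary, hence intersect trivially, and so $v$ and $w$ are linearly independent. I do not expect any real obstacle here: the statement is essentially a repackaging of Lemma~\ref{lem:atomic-2}, and the only non-mechanical ingredient is recognising that the adapted-subplane coefficient formula puts us exactly in the hypothesis of that lemma.
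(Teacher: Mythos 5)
Your proof is correct and follows essentially the same route as the paper: rewrite the adapted-subplane coefficients via the formula preceding Proposition~\ref{prop:plane}, feed them directly into Lemma~\ref{lem:atomic-2}, and then verify linear independence of $v$ and $w$. Your independence check (both vectors nonzero and lying in the complementary subspaces $\operatorname{span}(v_1,\ldots,v_n)$ and $\operatorname{span}(w_1,\ldots,w_n)$) is in fact spelled out a bit more carefully than the paper's rather terse remark.
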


\begin{proof}
Let $(v_j,w_j)$ be the given basis of $M_j$, and put $a_j=c_e(f
\vert_{M_j})$ and $b_j=c_1(g \vert_{M_j})$. We define $M$ to be the
span of $(v,w)$ where $v=\sum_{j=1}^n x_j v_j$ and $w=\sum_{j=1}^n y_j
w_j$, and $x,y \in k^n$ are to be determined. As we have already
seen, $c_e(f \vert_M)$ and  $c_1(g \vert_M)$ are exactly the
quantities $f$ and $g$ appearing in Lemma~\ref{lem:atomic-2}. Thus, by
that lemma, we can find $x$ and $y$ such that $c_e(f \vert_M) = 0$ and
$c_1(g \vert_M) \ne 0$. Note that $x$ and $y$ are necessarily non-zero (since $F \ne 0$), and so $v$ and $w$ are linearly independent vectors. This completes the proof.
\end{proof}

\begin{proof}[Proof of Proposition~\ref{prop:plane}]
We claim that the result holds with $\C{c:plane}(d,r) =
(\C{c:atomic-2}(d)+1)^{rd}$. Let $f_1, \ldots, f_r$ and $V=M_1 \oplus \cdots
\oplus M_n$ be given as in the proposition. Each $f_i$ has $d$
coefficients that we wish to make vanish, namely the $c_e$ for $e \in
\{0,\ldots,d\} \setminus \{1\}$. Thus there are $rd$ coefficients total
that we wish to make vanish. We will accomplish this in $rd$ steps,
where we make one coefficient vanish in each step.

Put $M_i^0=M_i$. Break the list $M^0_1, \ldots, M^0_n$ up into
$(\C{c:atomic-2}+1)^{rd-1}$ blocks of size $>\C{c:atomic-2}$. Applying
Lemma~\ref{lem:atomic-3} to the $i$th block, we can find an adapted
subplane $M^1_i$ of the direct sum of planes in this block such that
$c_1(f_1 \vert_{M^1_i})$ is non-zero and $c_e(f_{\ell} \vert_{M^1_i})=0$,
where $e \in \{0,\ldots,d\} \setminus \{1\}$ and $1 \le \ell \le
r$ are of our choosing (we choose the same $e$ and $\ell$ in each
block). Thus after this first step, we have planes $M^1_i$ for $1 \le i
\le (\C{c:atomic-2}+1)^{rd-1}$ and our chosen coefficient of $f_{\ell}$
vanishes on each plane. We now continue in the same manner with the
restrictions of the $f$s to the direct sum of the blocks $M_i^1$, moving onto a
different coefficient (of either the same $f_\ell$ or a different one). A key point
in this procedure is that one we have made some coefficient, say $c_e$
of $f_{\ell}$, vanish, it remains vanishing in each subsequent step. 
\end{proof}

\subsection{Derivatives}

Given $f \in \cP_d(V)$, we let $Df \in \cP_d(V \oplus V)$ be the bi-degree $(1, d-1)$ piece of the polynomial $(v,w) \mapsto f(v+w)$.

\newC{c:Df}
\begin{proposition} \label{prop:Df}
The strength of $Df$ can be lower-bounded by a function of $d$ and $\str(f)$. 
\end{proposition}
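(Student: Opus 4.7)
The plan is to pass through codimensions of singular loci and invoke Theorem~\ref{thm:sing} (applicable because Brauer fields are semi-perfect by Proposition~\ref{prop:semiperfect}) in both directions. First, I would unpack $Df$ explicitly: writing $f=\sum_I c_I x^I$ and extracting the bi-degree $(1,d-1)$ piece of $(v+w)^I$ coordinate-by-coordinate gives
\[
Df(v,w)=\sum_{j=1}^{n} v_j\,(\partial_j f)(w),
\]
which is a polynomial of total degree $d$ on the $2n$-dimensional space $V\oplus V$.

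Let $I_f\subset \cP(V)$ be the ideal generated by $\partial_1 f,\ldots,\partial_n f$, and let $I_{Df}\subset\cP(V\oplus V)$ be the ideal generated by all first-order partial derivatives of $Df$. From the explicit formula above,
\[
\partial_{v_j}Df=\partial_j f(w),\qquad \partial_{w_k}Df=\sum_{j} v_j\,\partial_j\partial_k f(w),
\]
so $I_{Df}$ contains the pullback of $I_f$ along the second projection $V\oplus V\to V$. Consequently $Z(I_{Df})\subseteq V\times Z(I_f)$, which yields the crucial codimension inequality
\[
\codim_{V\oplus V} Z(I_{Df})\;\ge\;\codim_V Z(I_f).
\]

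To finish, I would chain two applications of Theorem~\ref{thm:sing}. Applied to $f$ in the direction ``strength lower-bounds codimension,'' the codimension of $Z(I_f)$ is lower-bounded by a function of $d$ and $\str(f)$; by the inequality above the same bound applies to $\codim Z(I_{Df})$. Applied to the degree-$d$ polynomial $Df$ in the converse direction, $\str(Df)$ is lower-bounded by a function of $d$ and $\codim Z(I_{Df})$, and hence ultimately by a function of $d$ and $\str(f)$, as required. There is no serious obstacle here: the only observation beyond careful bookkeeping is the easy containment $I_f\subseteq I_{Df}$ (viewing $I_f$ inside $\cP(V\oplus V)$ via the second projection), which is immediate from the defining formula for $Df$; Theorem~\ref{thm:sing} does all the heavy lifting, and crucially it bypasses the awkward case $p\mid d$ (where the naive identity $Df(v,v)=d\,f(v)$ becomes useless).
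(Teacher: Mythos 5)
Your proposal is correct and takes essentially the same route as the paper: both compare the singular-locus ideal of $Df$ with (the pullback of) that of $f$, observe that the partial derivatives of $Df$ with respect to the $v$-variables recover $\partial_j f(w)$ so that $Z(I_{Df})\subseteq V\times Z(I_f)$, and then chain Theorem~\ref{thm:sing} in the two directions. The only difference is presentational (explicit coordinate partials in your write-up versus directional derivatives in the paper), and your explicit monomial check confirms the formula $Df(v,w)=\sum_j v_j\,\partial_j f(w)$ holds in all characteristics, which is the needed input.
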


\begin{proof}
Let $I \subseteq \cP(V)$ and $J \subseteq \cP(V \oplus V)$ be the
ideals generated by the first-order partial derivaties of $f$ and $Df$,
respectively.  That means that $I$ is generated by the directional
derivatives $w \mapsto \frac{\partial f}{\partial v}(w)$ as $v$ ranges
over $V$. The right-hand side is the coefficient of $\epsilon^2$ in
$f(\epsilon v + w)$, which is also the value of the directional derivative
$\frac{\partial Df}{\partial (v,0)}$ at any point $(\tilde{v},w)$ with
$\tilde{v} \in V$ arbitrary. This implies that the projection of $Z(J)$
into the second factor is contained in $Z(I)$.

Now by Theorem~\ref{thm:sing} the strength of $Df$ can be lower-bounded
in terms of ($d$ and) the codimension of $Z(J)$ in $V \oplus V$.  By the above
this codimension is at least that of $Z(I)$ in $V$, which again by
Theorem~\ref{thm:sing} can be lower-bounded in terms of $\str(f)$.
\end{proof}

\begin{corollary} \label{cor:D-strength}
The strength of the tuple $(Df_1, \ldots, Df_r)$ can be lower-bounded as a
function of $\ul{d}$ and the strength of $\ul{f} \in \cP_{\ul{d}}(V)$. 
\end{corollary}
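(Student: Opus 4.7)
The plan is to deduce this corollary directly from Proposition~\ref{prop:Df} by exploiting the linearity of the operator $D$. The key observation is that $D$ commutes with $k$-linear combinations: given scalars $c_j$ and polynomials $f_{i_j}$ all of the same degree $d$, we have
\begin{displaymath}
D\bigl(c_1 f_{i_1} + \cdots + c_s f_{i_s}\bigr) = c_1 Df_{i_1} + \cdots + c_s Df_{i_s},
\end{displaymath}
since the operation $f \mapsto Df$ is linear in $f$. Moreover, $Df_i$ is homogeneous of total degree $d_i$ on $V \oplus V$, so two $Df_i$'s sit in the same graded piece precisely when the corresponding $f_i$'s do.

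First I would unwind the definition of $\str(Df_1,\ldots,Df_r)$: by definition this is the minimum of $\str(h)$ as $h$ ranges over nontrivial $k$-linear combinations $h = c_1 Df_{i_1} + \cdots + c_s Df_{i_s}$ with $d_{i_1} = \cdots = d_{i_s} =: d$. By linearity, any such $h$ equals $Df'$ for the nontrivial linear combination $f' = c_1 f_{i_1} + \cdots + c_s f_{i_s} \in \cP_d(V)$. By the definition of strength of a tuple, $\str(f') \ge \str(\ul{f})$.

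Next I would apply Proposition~\ref{prop:Df} to $f'$: this gives a lower bound on $\str(Df') = \str(h)$ in terms of $d$ and $\str(f') \ge \str(\ul{f})$. Since $d$ is one of the entries of $\ul{d}$, this bound is a function of $\ul{d}$ and $\str(\ul{f})$. Taking the infimum over all admissible $h$ yields the desired lower bound on $\str(Df_1,\ldots,Df_r)$.

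There is no serious obstacle here: the only points to be careful about are the linearity of $D$ (immediate from its definition as a bi-homogeneous component of $f(v+w)$) and the observation that the ``same degree'' requirement for linear combinations in the tuple $(Df_1,\ldots,Df_r)$ matches the corresponding requirement for $(f_1,\ldots,f_r)$. Once these are noted, the corollary is an immediate consequence of Proposition~\ref{prop:Df}.
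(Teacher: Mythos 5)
Your proposal is correct and follows essentially the same route as the paper: observe that $D$ is linear, so any nontrivial same-degree linear combination of the $Df_i$ is $Df'$ for the corresponding combination $f'$ of the $f_i$ (which has strength at least $\str(\ul{f})$), and then apply Proposition~\ref{prop:Df}. The paper's proof is just a terser version of exactly this argument.
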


\begin{proof}
Consider a non-trivial homogeneous linear combination $\sum_{i=1}^r c_i
Df_i$. Since $D$ is linear, this equals $D(\sum_{i=1}^r c_i f_i)$, whose
strength is lower-bounded by a function of ($\ul{d}$ and) $\sum_{i=1}^r
c_i f_i$ by Proposition \ref{prop:Df}. The latter strength, in turn,
is lower-bounded by that of $\ul{f}$. 
\end{proof}

\begin{remark} \label{rmk:D2}
Define $D_2f$ to be the bi-degree $(2,d-2)$ piece of $(v,w) \mapsto f(v+w)$. It is not necessarily true that $D_2f$ has high strength when $f$ does. Indeed, suppose $p$ is odd and let $f=x_1^{p+1}+\cdots+x_n^{p+1}$ with $n \gg 0$. Then $f$ has high strength by Proposition~\ref{prop:diagstrength}, but $D_2f=0$. We thank Amichai Lampert for showing us this example.
\end{remark}

\subsection{Construction of orthogonal planes}

We now show that we can find a plane orthogonal to a given subspace, and having some other properties.

\newC{c:orth-plane}
\begin{proposition} \label{prop:orth-plane}
Assume $\Sigma^*(\ul{d})$ holds. Let $\ul{f} \in \cP_{\ul{d}}(V)$, let $1 \le i \le r$, and let $F$ be a subspace of $V$. Suppose $\str(\ul{f})>\C{c:orth-plane}(\ul{d}, \dim{F})$. Then we can find a plane $M \subset V$ such that
\begin{enumerate}
\item $F$ and $M$ are linearly independent,
\item $F$ and $M$ are $f_j$-orthogonal for $i \le j \le r$, 
\item $f_j \vert_M=0$ for all $j$ with $d_j<d_i$,
\item $c_1(f_j \vert_M)=0$ for all $j>i$ with $d_j=d_i$, and 
\item $c_1(f_i \vert_M) \ne 0$.
\end{enumerate}
\end{proposition}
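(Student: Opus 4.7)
The plan is to mirror the proof of Proposition~\ref{prop:orth-line}: use that proposition itself to produce the first basis vector $v$ of $M$, and then find the second basis vector $w$ by solving a lower multi-degree system of polynomial conditions via Proposition~\ref{prop:lowdeg}. First I would invoke Proposition~\ref{prop:orth-line} with data $(\ul{f}, i, F)$ to obtain $v \in V \setminus F$ with $v$ being $f_j$-orthogonal to $F$ for $i \le j \le r$, $f_j(v) = 0$ for $i < j \le r$, and $f_i(v) \ne 0$; this requires $\str(\ul{f}) > \C{c:orth-line}(\ul{d}, \dim F)$. Fix a linear complement $V'$ of $F \oplus \operatorname{span}(v)$ in $V$; the goal becomes finding $w \in V'$ such that $M := \operatorname{span}(v, w)$ satisfies (a)--(e).

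Next, fixing a basis $u^{(1)}, \ldots, u^{(n)}$ of $F$ with $n = \dim F$ and writing $\tilde{f}_j$ for the polarization of $f_j$, I would unravel the conditions on $w$ by expanding $f_j(u + xv + yw)$. Condition (b) becomes the vanishing of
\[
\tilde{f}_j\bigl(u^{(k_1)} \cdots u^{(k_a)} v^{b_1} w^{b_2}\bigr) \qquad \text{for } i \le j \le r,\ a \ge 1,\ b_2 \ge 1,\ a + b_1 + b_2 = d_j,
\]
the $b_2 = 0$ cases having been secured by the choice of $v$. Condition (c) becomes $\tilde{f}_j(v^{b_1} w^{b_2}) = 0$ for $j > i$ with $d_j < d_i$ and $b_2 \ge 1$; condition (d) becomes $\tilde{f}_j(v w^{d_j - 1}) = 0$ for $j > i$ with $d_j = d_i$; and condition (e) becomes the non-vanishing of the single polynomial $g(w) := \tilde{f}_i(v w^{d_i - 1})$ of degree $d_i - 1$ in $w$. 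A degree inspection shows that every equation on $w$ has degree at most $d_i - 1$, so the multi-degree $\ul{e}$ of the system is lexicographically strictly less than $\ul{d}$. I would then apply Proposition~\ref{prop:lowdeg} on $V'$ with parameter $d' = d_i - 1$; the inductive hypothesis $\Sigma^*(\ul{d})$ supplies the required density input. Its strength hypothesis demands that the degree-$(d_i - 1)$ equations together with $g$ have sufficiently large strength on $V'$; these are the specializations $\tilde{f}_j(y w^{d_j - 1})$ with $y \in \{u^{(1)}, \ldots, u^{(n)}, v\}$ and $d_j = d_i$, which by Corollary~\ref{cor:D-strength} are specializations of the polarized derivatives $Df_j$ whose tuple has large strength on $V \oplus V$. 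Combining this with Proposition~\ref{prop:restriction} for the passage to $V'$ and choosing a constant $\C{c:orth-plane}(\ul{d}, n)$ dominating $\C{c:orth-line}(\ul{d}, n)$ and all the ensuing strength thresholds completes the argument.

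The principal obstacle is the strength control in the final application of Proposition~\ref{prop:lowdeg}. Corollary~\ref{cor:D-strength} supplies high strength of $(Df_j)_{d_j = d_i}$ on $V \oplus V$, but the polynomials on $V'$ that actually enter the Proposition~\ref{prop:lowdeg} hypothesis are obtained by fixing the first argument of $Df_j$ to a vector in the prescribed low-dimensional subspace $\operatorname{span}(u^{(1)}, \ldots, u^{(n)}, v)$. A direct application of Proposition~\ref{prop:restriction} to this codimension-$\dim V$ slice of $V \oplus V$ would lose too much strength to be useful; a more refined argument — exploiting the linearity of $Df_j$ in its first variable, the fact that we need strength of the tuple (not of each individual specialization), and the structure of the fixed specialization points — will be needed. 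This subtlety, absent from the purely linear line-construction in Proposition~\ref{prop:orth-line}, is the main new technical ingredient.
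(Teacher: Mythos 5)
Your plan diverges from the paper's proof in one crucial structural choice, and the divergence is exactly what creates the gap you flag at the end. You first fix $v$ via Proposition~\ref{prop:orth-line} and then try to solve for $w$ alone. Once $v$ is pinned down, the condition (e) becomes the non-vanishing of $g(w)=\tilde{f}_i(vw^{d_i-1})$, which is the specialization of $Df_i$ at first argument $v$; similarly the constraints from (d) are specializations of the $Df_j$. Corollary~\ref{cor:D-strength} gives high strength of $(Df_i,\ldots,Df_\ell)$ as polynomials on $V\oplus V$, but that says nothing about the strength of the slice obtained by setting the first variable equal to a particular $v$. Such a specialization has codimension $\dim V$ in $V\oplus V$, so Proposition~\ref{prop:restriction} is useless, and there is no obvious way to ensure the $v$ produced by Proposition~\ref{prop:orth-line} is one where the slice keeps high strength -- $v$ has already been forced to satisfy several vanishing constraints, and high strength of a specialization is not a closed condition that you can simply add to the system. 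As you correctly note, this is the place where the argument is incomplete; but it is not a "refined further step" so much as an indication that the two-stage decomposition is the wrong framework.

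The paper sidesteps this entirely by solving for the pair $(v,w)$ simultaneously. One works on $W\times W$ (with $W$ a complement of $F$ in $V$), writes down the system consisting of the cross-term conditions expressing orthogonality of $\operatorname{span}(v,w)$ with $F$, the orthogonality and vanishing conditions $f_j(v)=f_j(w)=0$ for $d_j<d_i$, and the equations $Df_j(v,w)=0$ for $j>i$ with $d_j=d_i$. All equations have degree $\le d_i$ and only the $Df_j$ have degree exactly $d_i$, so the multi-degree is $<\ul{d}$. Crucially the $Df_j$ live on the full product space, where Corollary~\ref{cor:D-strength} applies directly and restriction to $W\times W$ costs at most $2\dim F$. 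One then invokes Proposition~\ref{prop:lowdeg} with $g=Df_i$ to find a point $(v,w)$ satisfying all the constraints with $Df_i(v,w)\ne 0$; linear independence of $v$ and $w$ is arranged last by a dimension count on the determinantal locus. In short: the key idea missing from your proposal is to never separate the search for $v$ from the search for $w$, which is precisely what keeps the relevant strength estimate available.
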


\begin{proof}
Let $W$ be a complementary space to $F$ in $V$. We aim to find $(v,w) \in W \times W$ satisfying the following conditions:
\begin{enumerate}[(i)]
\item $v$ and $w$ are linearly independent, 
\item $\operatorname{span}(v,w)$ and $F$ are $f_j$-orthogonal for $i
\le j \le r$,
\item $v$ and $w$ are $f_j$-orthogonal for all $j$ with $d_j<d_i$,
\item $f_j(v)=f_j(w)=0$ for all $j$ with $d_j<d_i$,
\item $(Df_j)(v,w) = 0 $ for all $j>i$ with $d_j=d_i$, and
\item $(Df_i)(v,w) \ne 0$.
\end{enumerate}
We can then take $M=\operatorname{span}(v,w)$. Note that (iii) and (iv) together are equivalent to (c), while (v) and (vi) are equivalent to (d) and (e).

Consider the closed subvariety $Z$ of $W \times W$ defined by
the equations in (ii)--(v). Let $\ul{e}$ be the multi-degree for these equations. All of these equations have degree $\le d_i$, and the only equations of degree $d_i$ are those in
(v), say $Df_{i+1}, \ldots, Df_{\ell}$; we thus see that $\ul{e}<\ul{d}$. The polynomials $Df_i, \ldots, Df_{\ell}$ have high strength by
Corollary~\ref{cor:D-strength}. We now apply Proposition~\ref{prop:lowdeg} (with $g=Df_i$). This yields a closed subvariety $Z'$ of $Z$ of small codimension such that $Z'(k)$ is dense in $Z'$ and $Df_i$ is non-vanishing on $Z'$. Thus there is a $k$-point $(v,w)$ of $Z'$
at which $Df_i$ is non-vanishing. This pair $(v,w)$ satisfies (ii)--(vi).

It remains to show that we can choose $v$ and $w$
linearly independent. Let $Y \subset W \times W$ be the set of pairs
$(v,w)$ that are linearly dependent. This set is Zariski closed (it
is a determinantal variety), and has dimension $1+\dim(W)$. Since the
codimension of $Z'$ in $W \times W$ is bounded from above, it follows
that the open set $Z' \setminus Y$ of $Z'$ is non-empty. The $k$-point
from the previous paragraph at which $Df_i$ is non-vanishing can be
chosen in this open set. This completes the proof.
\end{proof}

\begin{remark}
It does not seem possible to directly obtain $f_j \vert_M=0$ for all $i<j \le r$ in the above proof. Indeed, this is equivalent to $d_j+1$ equations of degree $d_j$. If we added these equations to our list, then we could potentially have $\ul{e} \ge \ul{d}$, putting us outside of the range of our inductive hypothesis. We would not be able to apply Proposition~\ref{prop:lowdeg}.
\end{remark}

\subsection{Completion of proof}

We now prove Proposition~\ref{prop:good} in characteristic~$p$. We restate the result for convenience:

\begin{proposition} \label{prop:goodp}
Assume $\Sigma^*(\ul{d})$ holds and $k$ has characteristic~$p$. Let $\ul{f} \in \cP_{\ul{d}}(V)$, let $1
\le i \le r$, and let $F$ be a subspace of $V$. If $\str(\ul{f})>\C{c:good}(\ul{d}, \dim{F})$, then there is a three dimensional subspace $E$ of $V$ such that:
\begin{enumerate}
\item $E$ and $F$ are linearly independent,
\item $E$ and $F$ are $f_j$-orthogonal for $i \le j \le r$,
\item $f_j$ vanishes on $E$ for $i<j \le r$, and
\item the restriction of $f_i$ to $E$ is good.
\end{enumerate}
\end{proposition}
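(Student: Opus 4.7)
The plan is to follow the architecture of the characteristic~0 proof of Proposition~\ref{prop:good0}, but to replace the use of Proposition~\ref{prop:diag}, which fails in positive characteristic, by the adapted-subplane construction of Proposition~\ref{prop:plane}. In effect, lines in the characteristic~0 proof will be replaced by planes. Let $\ell$ denote the largest index with $d_\ell=d_i$, and fix $n=\C{c:plane}(d_i,\ell-i+1)+1$.

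First I would iteratively apply Proposition~\ref{prop:orth-plane} to construct planes $M_1,\ldots,M_n\subset V$ such that the list $F,M_1,\ldots,M_n$ is linearly independent, pairwise $f_j$-orthogonal for every $i\le j\le r$, and such that on each plane $M_k$ one has $f_j|_{M_k}=0$ for $d_j<d_i$, $c_1(f_j|_{M_k})=0$ for $i<j\le\ell$, and $c_1(f_i|_{M_k})\ne 0$. At the $k$th step I replace the subspace $F$ in Proposition~\ref{prop:orth-plane} by $F\oplus M_1\oplus\cdots\oplus M_{k-1}$, so the required strength bound depends on $\dim(F)+2(k-1)$; the constant $\C{c:good}(\ul{d},\dim F)$ will be calibrated to make the final step succeed.

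Next I would set $V_0=M_1\oplus\cdots\oplus M_n$ and apply Proposition~\ref{prop:plane} to the degree-$d_i$ tuple $(f_i|_{V_0},f_{i+1}|_{V_0},\ldots,f_\ell|_{V_0})$. Its hypotheses hold by the preceding step, and the inequality $n>\C{c:plane}(d_i,\ell-i+1)$ produces an adapted subplane $M\subset V_0$ on which $f_i$ restricts to $xy^{d_i-1}$ and each $f_j$ with $i<j\le\ell$ restricts to zero. The remaining forms $f_j$ with $j>\ell$ have $d_j<d_i$; since each such $f_j$ vanishes on every $M_k$ and the $M_k$ are $f_j$-orthogonal, $f_j$ vanishes identically on $V_0$ and hence on $M$.

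Finally I would invoke Proposition~\ref{prop:orth-line} with $F$ replaced by $F\oplus M$ to obtain a line $L$ that is linearly independent from and $f_j$-orthogonal to $F\oplus M$ for $i\le j\le r$, on which $f_j$ vanishes for $i<j\le r$, and on which $f_i$ is nonzero. Taking $E=M\oplus L$, a short bookkeeping using the various orthogonalities verifies that $E$ and $F$ are linearly independent and $f_j$-orthogonal for $j\ge i$, that $f_j|_E=0$ for $j>i$, and that $f_i|_E$ has the form $xy^{d_i-1}+b z^{d_i}$ with $b\in k^\times$, hence is good. The main obstacle, and the reason the characteristic~$0$ approach must be modified, is handling the degree-$d_i$ forms $f_j$ with $j>i$: by the remark following Proposition~\ref{prop:orth-plane} they cannot be killed on a single plane, so the heart of the argument is the first two steps, where many planes are produced and a subplane is carved out via the coefficient-by-coefficient elimination in Proposition~\ref{prop:plane}.
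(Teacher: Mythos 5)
Your proposal is correct and matches the paper's own proof in all essentials: iterated use of Proposition~\ref{prop:orth-plane} to build many planes, Proposition~\ref{prop:plane} applied to the degree-$d_i$ forms $f_i,\ldots,f_\ell$ to carve out an adapted subplane $M$, and a final application of Proposition~\ref{prop:orth-line} to append a line $L$, with $E = M \oplus L$. The bookkeeping, the choice of $n > \C{c:plane}(d_i,\ell-i+1)$, and the closing argument are all as in the paper.
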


\begin{proof}
For a sufficiently large $n$ to be chosen below, we first
find planes $M_1,\ldots,M_n$ such that:
\begin{enumerate}[(i)]
\item $M_1,\ldots,M_n,F$ are linearly independent,
\item $M_1,\ldots,M_n,F$ are $f_j$-orthogonal for $i \leq j \leq r$,
\item $f_j|_{M_\ell}=0$ for all $\ell$ and all $j$ with $d_j<d_i$,
\item $c_1(f_j|_{M_\ell})=0$ for all $j>i$ with $d_j=d_i$, and
\item $c_1(f_i|_{M_\ell}) \neq 0$. 
\end{enumerate}
Here $M_1$ is constructed by applying Proposition~\ref{prop:orth-plane}
to $F$, $M_2$ by applying the same proposisition to $M_1 \oplus F$, and so on. For this we need that
$\str(\ul{f})>\C{c:orth-plane}(\ul{d},\dim(F)+2(n-1))$ if we assume, as
we may, that $\C{c:orth-plane}$ is non-decreasing in its second argument.

We now apply Proposition~\ref{prop:plane} to the polynomials $f_j$ for
which $j \geq i$ and $d_j=d_i$; let $r'$ be their number.  We then
find an adapted subplane $M$ of $M_1 \oplus \cdots \oplus M_n$
such that $c_e(f_i|_M)=\delta_{e,1}$ and $f_j|_M=0$ for all $j >i$
with $d_j=d_i$. This exists provided that $n$ is chosen greater than
$\C{c:plane}(d_i,r')$.  Since $f_j|_{M_\ell}=0$ for all $j$ with $d_j<d_i$
and all $\ell$, we now have $f_j|_M=0$ for all $j>i$, while $f_i|_M$
has the form $xy^{d_i-1}$ in the coordinates on $M$. Furthermore, we
$M,F$ are $f_j$-orthogonal for all $j \geq i$.

Next we apply Proposition~\ref{prop:orth-line} to $M \oplus F$
to find a line $L$ in $V$ such that:
\begin{enumerate}[(i)]
\item $L$ and $M \oplus F$ are linearly independent,
\item $L$ and $M \oplus F$ are $f_j$-orthogonal for $i \leq j \leq r$, 
\item $f_j|_L=0$ for $i<j \leq r$, and 
\item $f_i|_L \neq 0$. 
\end{enumerate}
For this we need that $\str(\ul{f})>\C{c:orth-line}(\ul{d},2+\dim(F))$. 

Finally, take $E=M \oplus L$. The restriction of $f_i$ to $E$ is good,
and all $f_j$ with $j>i$ vanish identically on $E$. By construction, $E$
and $F$ are linearly independent and $f_j$-orthogonal for all $j \geq i$.
\end{proof}

\begin{remark}
The proof in this section also works in characteristic zero.  However,
we have decided to first explain the simpler proof in characteristic
zero in \S \ref{s:normal0} using diagonal forms and only then give
the more technical proof for the general case using adapted planes. 
\end{remark}

\end{document}